\documentclass[11pt]{amsart}

% spell-checker: ignoreRegExp \\[A-Za-z@]+

\usepackage[
paper=a4paper,
% headsep=20pt,footskip=20pt,text={150mm,240mm},includehead,centering
headsep=17pt,footskip=17pt,margin=26mm%,includehead
]{geometry}

\usepackage{float,array,multirow,calc,booktabs}

\usepackage[bookmarks]{hyperref}
\hypersetup{colorlinks=true,linkcolor=blue,citecolor=blue}

\usepackage{amssymb,amsxtra,bbm}
\usepackage[shortlabels]{enumitem}
\setlist[enumerate,1]{label=\textup{(\arabic*)}}

\usepackage{graphicx,color}

\usepackage{tikz}
\usetikzlibrary{
  cd,
  calc,
  positioning,
  arrows,
  decorations.pathreplacing,
  decorations.markings,
  intersections,
  patterns,
  bbox,
} 

\usepackage[mode=buildnew]{standalone}

% \usetikzlibrary{external}
% \tikzexternalize[
%   up to date check=md5
% ]
% \AtBeginEnvironment{tikzcd}{\tikzexternaldisable}
% \AtEndEnvironment{tikzcd}{\tikzexternalenable}

\definecolor{todo-background-color}{gray}{0.95} 
\usepackage[
  textwidth=1.1in,
  backgroundcolor=todo-background-color,
  bordercolor=black,
  linecolor=black,
  textsize=footnotesize
]{todonotes}

\usepackage[final,color,notref,notcite]{showkeys}
\usepackage[color,notref,notcite]{showkeys}
\usepackage{seqsplit}

\definecolor{labelkey}{rgb}{0,.5,0}

% Hack to amsart.cls
% Jae Choon Cha <jccha@postech.ac.kr>
% spell-checker: disable
\iftrue
\makeatletter
\def\@settitle{%
  \begin{flushleft}%
    % \baselineskip14\p@\relax
    \LARGE\bfseries
    % \uppercasenonmath\@title
    \strut\@title\strut
  \end{flushleft}%
}
\def\@setauthors{%
  \begingroup
  \def\thanks{\protect\thanks@warning}%
  \trivlist
  % \centering
  \raggedright
  \large \@topsep28\p@\relax
  \advance\@topsep by -\baselineskip
  \item\relax
  \author@andify\authors
  \def\\{\protect\linebreak}%
  % \MakeUppercase{\authors}%
  \authors
  \ifx\@empty\contribs
  \else
    ,\penalty-3 \space \@setcontribs
    \@closetoccontribs
  \fi
  \normalfont
  % \@setaddresses
  \endtrivlist
  \endgroup
}
\def\@setaddresses{\par
  \nobreak \begingroup
  \small\raggedright
  \def\author##1{\nobreak\addvspace\smallskipamount}%
  \def\\{\unskip, \ignorespaces}%
  \interlinepenalty\@M
  \def\address##1##2{\begingroup
    \par\addvspace\bigskipamount\noindent
    \@ifnotempty{##1}{(\ignorespaces##1\unskip) }%
    {\ignorespaces##2}\par\endgroup}%
  \def\curraddr##1##2{\begingroup
    \@ifnotempty{##2}{\nobreak\noindent\curraddrname
      \@ifnotempty{##1}{, \ignorespaces##1\unskip}\/:\space
      ##2\par}\endgroup}%
  \def\email##1##2{\begingroup
    \@ifnotempty{##2}{\nobreak\noindent E-mail address%
      \@ifnotempty{##1}{, \ignorespaces##1\unskip}\/:\space
      \ttfamily##2\par}\endgroup}%
  \def\urladdr##1##2{\begingroup
    \def~{\char`\~}%
    \@ifnotempty{##2}{\nobreak\noindent\urladdrname
      \@ifnotempty{##1}{, \ignorespaces##1\unskip}\/:\space
      \ttfamily##2\par}\endgroup}%
  \addresses
  \endgroup
  \global\let\addresses=\@empty
}
\def\@setabstracta{%
  \ifvoid\abstractbox
  \else
    \skip@20pt \advance\skip@-\lastskip
    \advance\skip@-\baselineskip \vskip\skip@
    % \chrule\vskip2pt
    \box\abstractbox
    \prevdepth\z@ % because \abstractbox is a \vtop
    % \vskip2pt\hrule
    \vskip-22pt
  \fi
}
\renewenvironment{abstract}{%
  \ifx\maketitle\relax
    \ClassWarning{\@classname}{Abstract should precede
      \protect\maketitle\space in AMS document classes; reported}%
  \fi
  \global\setbox\abstractbox=\vtop \bgroup
    \normalfont\small
    \list{}{\labelwidth\z@
      \leftmargin0pc \rightmargin\leftmargin
      \listparindent\normalparindent \itemindent\z@
      \parsep\z@ \@plus\p@
      
    }%
    \item[\hskip\labelsep\bfseries\abstractname.]%
}{%
  \endlist\egroup
  \ifx\@setabstract\relax \@setabstracta \fi
}

\def\ps@headings{\ps@empty
  \def\@evenhead{%
    \setTrue{runhead}%
    \normalfont\scriptsize
    \rlap{\thepage}\hfill
    \def\thanks{\protect\thanks@warning}%
    \leftmark{}{}}%
  \def\@oddhead{%
    \setTrue{runhead}%
    \normalfont\scriptsize
    \def\thanks{\protect\thanks@warning}%
    \rightmark{}{}\hfill \llap{\thepage}}%
  \let\@mkboth\markboth
}\ps@headings

% Subsection heading:
\def\section{\@startsection{section}{1}%
  \z@{-1.4\linespacing\@plus-.5\linespacing}{.8\linespacing}%
  {\normalfont\bfseries\Large\raggedright}}
\def\subsection{\@startsection{subsection}{2}%
  \z@{-.8\linespacing\@plus-.3\linespacing}{.5\linespacing\@plus.2\linespacing}%
  {\normalfont\bfseries\large}}
\def\subsubsection{\@startsection{subsubsection}{3}%
  \z@{.7\linespacing\@plus.2\linespacing}{-1.5ex}%
  {\normalfont\bfseries}}
\def\@secnumfont{\bfseries}

\renewcommand\contentsnamefont{\bfseries\large}
\def\@starttoc#1#2{\begingroup
  \setTrue{#1}%
  \par\removelastskip\vskip\z@skip
  \@startsection{}\@M\z@{\linespacing\@plus\linespacing}%
    {.5\linespacing}{%\centering
      \contentsnamefont}{#2}%
  \ifx\contentsname#2%
  \else \addcontentsline{toc}{section}{#2}\fi
  \makeatletter
  \@input{\jobname.#1}%
  \if@filesw
    \@xp\newwrite\csname tf@#1\endcsname
    \immediate\@xp\openout\csname tf@#1\endcsname \jobname.#1\relax
  \fi
  \global\@nobreakfalse \endgroup
  \addvspace{32\p@\@plus14\p@}%
  \let\tableofcontents\relax
}
\def\contentsname{Contents}
\def\l@section{\@tocline{1}{.5ex}{0mm}{5pc}{}}
\def\l@subsection{\@tocline{2}{0pt}{2em}{5pc}{}}
\makeatother

% spell-checker: enable
%\iftrue/false for amsart.cls hack
\fi 

% \renewcommand\cftsecfont{\small}
% \renewcommand\cftsecpagefont{\tiny}
% \renewcommand\cftsubsecfont{\footnotesize}
% \renewcommand\cftsubsecpagefont{\footnotesize}

% Hacking to arrows: automatically use long arrows in displayed math.
\def\to{\mathchoice{\longrightarrow}{\rightarrow}{\rightarrow}{\rightarrow}}
\makeatletter
\newcommand{\shortxra}[2][]{\ext@arrow 0359\rightarrowfill@{#1}{#2}}
\def\longrightarrowfill@{\arrowfill@\relbar\relbar\longrightarrow}
\newcommand{\longxra}[2][]{\ext@arrow 0359\longrightarrowfill@{#1}{#2}}
\renewcommand{\xrightarrow}[2][]{\mathchoice{\longxra[#1]{#2}}%
  {\shortxra[#1]{#2}}{\shortxra[#1]{#2}}{\shortxra[#1]{#2}}}
\makeatother

% A variation of \otimes that accepts a base ring parameter.

% No page break macro for line break after the title of a theorem environment
\makeatletter
\def\Nopagebreak{\@nobreaktrue\nopagebreak}

% \newtheoremstyle{mytheorem}
%         {}{}              %%% space between body and thm
%         {\itshape}                      %%% Thm body font
%         {}                              %%% Indent amount (empty = no indent)
%         {\bfseries}                     %%% Thm head font
%         {.}                %%% Punctuation after thm head
%         {5\p@ plus\p@ minus\p@\relax}         %%% Space after thm head
%         {\thmname{#1} #2\thmnote{ {\the\thm@notefont(#3)}}}%%% Thm head spec
% \theoremstyle{mytheorem}

\makeatother

% \widehat, from https://tex.stackexchange.com/questions/16337/can-i-get-a-widebar-without-using-the-mathabx-package/60253#60253
\makeatletter
\let\save@mathaccent\mathaccent
\newcommand*\if@single[3]{%
  \setbox0\hbox{${\mathaccent"0362{#1}}^H$}%
  \setbox2\hbox{${\mathaccent"0362{\kern0pt#1}}^H$}%
  \ifdim\ht0=\ht2 #3\else #2\fi
  }
%The bar will be moved to the right by a half of \macc@kerna, which is computed by amsmath:
\newcommand*\rel@kern[1]{\kern#1\dimexpr\macc@kerna}
%If there's a superscript following the bar, then no negative kern may follow the bar;
%an additional {} makes sure that the superscript is high enough in this case:
\newcommand*\widebar[1]{\@ifnextchar^{{\wide@bar{#1}{0}}}{\wide@bar{#1}{1}}}
%Use a separate algorithm for single symbols:
\newcommand*\wide@bar[2]{\if@single{#1}{\wide@bar@{#1}{#2}{1}}{\wide@bar@{#1}{#2}{2}}}
\newcommand*\wide@bar@[3]{%
  \begingroup
  \def\mathaccent##1##2{%
%Enable nesting of accents:
    \let\mathaccent\save@mathaccent
%If there's more than a single symbol, use the first character instead (see below):
    \if#32 \let\macc@nucleus\first@char \fi
%Determine the italic correction:
    \setbox\z@\hbox{$\macc@style{\macc@nucleus}_{}$}%
    \setbox\tw@\hbox{$\macc@style{\macc@nucleus}{}_{}$}%
    \dimen@\wd\tw@
    \advance\dimen@-\wd\z@
%Now \dimen@ is the italic correction of the symbol.
    \divide\dimen@ 3
    \@tempdima\wd\tw@
    \advance\@tempdima-\scriptspace
%Now \@tempdima is the width of the symbol.
    \divide\@tempdima 10
    \advance\dimen@-\@tempdima
%Now \dimen@ = (italic correction / 3) - (Breite / 10)
    \ifdim\dimen@>\z@ \dimen@0pt\fi
%The bar will be shortened in the case \dimen@<0 !
    \rel@kern{0.6}\kern-\dimen@
    \if#31
      \overline{\rel@kern{-0.6}\kern\dimen@\macc@nucleus\rel@kern{0.4}\kern\dimen@}%
      \advance\dimen@0.4\dimexpr\macc@kerna
%Place the combined final kern (-\dimen@) if it is >0 or if a superscript follows:
      \let\final@kern#2%
      \ifdim\dimen@<\z@ \let\final@kern1\fi
      \if\final@kern1 \kern-\dimen@\fi
    \else
      \overline{\rel@kern{-0.6}\kern\dimen@#1}%
    \fi
  }%
  \macc@depth\@ne
  \let\math@bgroup\@empty \let\math@egroup\macc@set@skewchar
  \mathsurround\z@ \frozen@everymath{\mathgroup\macc@group\relax}%
  \macc@set@skewchar\relax
  \let\mathaccentV\macc@nested@a
%The following initialises \macc@kerna and calls \mathaccent:
  \if#31
    \macc@nested@a\relax111{#1}%
  \else
%If the argument consists of more than one symbol, and if the first token is
%a letter, use that letter for the computations:
    \def\gobble@till@marker##1\endmarker{}%
    \futurelet\first@char\gobble@till@marker#1\endmarker
    \ifcat\noexpand\first@char A\else
      \def\first@char{}%
    \fi
    \macc@nested@a\relax111{\first@char}%
  \fi
  \endgroup
}
\makeatother

\newtheorem{theorem}{Theorem}[section]
\newtheorem{theoremalpha}{Theorem}

\newtheorem{corollary}[theorem]{Corollary}
\newtheorem{corollaryalpha}[theoremalpha]{Corollary}
\newtheorem{lemma}[theorem]{Lemma}

\newtheorem*{claim}{Claim}
\theoremstyle{definition}
\newtheorem{definition}[theorem]{Definition}

\newtheorem{remark}[theorem]{Remark}

\newtheoremstyle{theorem-giventitle}
        {}{}              %%% space between body and thm
        {\itshape}                      %%% Thm body font
        {}                              %%% Indent amount (empty = no indent)
        {\bfseries}                     %%% Thm head font
        {.}                             %%% Punctuation after thm head
        {5pt plus 3pt minus 1pt}      %%% Space after thm head
        {\thmnote{\bfseries#3}}%%% Thm head spec
\theoremstyle{theorem-giventitle}
\newtheorem{theorem-named}{}
\newtheoremstyle{definition-giventitle}
        {}{}              %%% space between body and thm
        {}                      %%% Thm body font
        {}                              %%% Indent amount (empty = no indent)
        {\bfseries}                     %%% Thm head font
        {.}                             %%% Punctuation after thm head
        {5pt plus 3pt minus 1pt}      %%% Space after thm head
        {\thmnote{\bfseries#3}}%%% Thm head spec
\theoremstyle{definition-giventitle}
\newtheorem{question-named}{}
\newtheorem{step-named}{}
\newtheorem{claim-named}{}

\numberwithin{equation}{section}

\def\Z{\mathbb{Z}}

\def\R{\mathbb{R}}

\def\cC{\mathcal{C}}
\def\cD{\mathcal{D}}

\def\cR{\mathcal{R}}

\def\bF{{\mathbb{F}}}

\def\inte{\operatorname{int}}
\def\id{\mathrm{id}}

\makeatletter
\def\tpmod#1{{\@displayfalse\pmod{#1}}}
\makeatother

\def\sm{\mathrm{sm}}
\def\top{\mathrm{top}}

\def\dax{\operatorname{dax}}
\def\Dax{\operatorname{Dax}}
\def\FQ{\operatorname{FQ}}
\def\Diff{\operatorname{Diff}}
\def\Homeo{\operatorname{Homeo}}

\def\Map{\operatorname{Map}}

\def\TOP{\mathrm{TOP}}

\def\setminus{\smallsetminus}

\def\isomto{\mathrel{\smash{\xrightarrow{\smash{\lower.4ex\hbox{$\scriptstyle{\cong}$}}}}}}
\def\isomfrom{\mathrel{\smash{\xleftarrow{\smash{\lower.4ex\hbox{$\scriptstyle{\cong}$}}}}}}

%%%%%%%%%%%%%%%%%%%%%%%%%%%%%%%%%%%%%%%%%%%%%%%%%%%%%%%%%%%%%%%%%%%%%%%%%%%%
%%%%%%%%%%%%%%%%%%%%%%%%%%%%%%%%%%%%%%%%%%%%%%%%%%%%%%%%%%%%%%%%%%%%%%%%%%%%

\begin{document}
\thispagestyle{empty}
\title{Light bulb smoothing for topological surfaces in 4-manifolds}

\author{Jae Choon Cha}
\address{
  Center for Research in Topology and Department of Mathematics\\
  POSTECH\\
  Pohang Gyeongbuk 37673\\
  Republic of Korea
}
\email{jccha@postech.ac.kr}

\author{Byeorhi Kim}
\address{
  Center for Research in Topology\\
  POSTECH\\
  Pohang Gyeongbuk 37673\\
  Republic of Korea
}
\email{byeorhikim@postech.ac.kr}

\begin{abstract}
  We present new smoothing techniques for topologically embedded surfaces in smooth 4-manifolds, which give topological isotopy to a smooth surface.
  As applications, we prove ``topological = smooth'' results in dimension 4 for certain disks and spheres modulo isotopy.
  A key step in our approach is to link Quinn's smoothing theory with ideas in Gabai's 4-dimensional light bulb theorem and succeeding developments of Schneiderman-Teichner and Kosanovi\'c-Teichner.
  As another application of our smoothing technique, we obtain a topological version of the Dax invariant which gives topological isotopy obstructions for topological disks in 4-manifolds.
\end{abstract}

\maketitle

\section{Introduction and main results}

A major challenge in 4-dimensional topology is to understand how the topological and smooth categories are similar and different.
There have been enormous advances since the work of Freedman and Donaldson, and especially it has been a very successful direction to find smoothly distinct but topologically equivalent objects.
Recall that the special sophistication of dimension 4 is from the difficulty of finding embedded disks, most importantly for Whitney moves.
Known differences between the topological and smooth categories can be understood as a failure of smoothing topological disks in 4-manifolds.
Topological disks from Freedman's work and subsequent developments eventually lead us to fundamental topological results such as the classification of simply connected 4-manifolds, while modern smooth techniques detect a surprisingly large extent of the failure of smoothing for many of those topological disks.

On the other hand, it is much less understood when the smooth and topological cases behave in the same way in dimension~4.
From the viewpoint of disks, the question is: when are topologically embedded disks smoothable?
Results in this direction could potentially bring us closer to affirmative answers to major open questions about the equivalence of topological and smooth categories in specific cases of interest, such as the smooth 4-dimensional Poincar\'e conjecture and (non-)existence questions for exotic smooth structures on other small 4-manifolds.

In this paper, we develop new smoothing techniques for topologically embedded disks and more generally surfaces of arbitrary genus in smooth 4-manifolds.
We focus on smoothing by topological isotopy.
That is, we study when a topologically embedded surface in a smooth 4-manifold is topologically isotopic to a smooth embedding.
Our main smoothing results are Theorems~\ref{theorem:light-bulb-smoothing} and~\ref{theorem:stable-smoothing} below in this introduction.

As an application, we prove a ``topological = smooth'' result for the isotopy classification of certain disks.
Throughout this paper, surfaces in a 4-manifold are properly embedded, and their isotopy is rel~$\partial$, i.e.,\ the boundary is fixed pointwise.
Topological embeddings and isotopy are locally flat.

\begin{corollaryalpha}
  \label{corollary:top-equal-smooth-disk-isotopy}
  Let $M$ be a smooth orientable 4-manifold.
  Consider smooth and topological disks in $M$ with a fixed smooth boundary $K\subset\partial M$.
  If a meridian of $K$ is null-homotopic in $\partial M \setminus K$, then the following natural map between smooth and topological isotopy classes is a bijection:
  \[
    \left\{
      \begin{tabular}{@{}c@{}}
        smooth disks in $M$ \\ bounded by~$K$
      \end{tabular}
    \right\}
    \Big/\,
    \begin{tabular}{@{}c@{}}
      smooth \\ isotopy
    \end{tabular}
    \;\xrightarrow{\;\approx\;}\;
    \left\{
      \begin{tabular}{@{}c@{}}
        topological disks in $M$ \\ bounded by~$K$
      \end{tabular}
    \right\}
    \Big/\,
    \begin{tabular}{@{}c@{}}
      topological \\ isotopy
    \end{tabular}
  \]
  That is, every topological disk in $M$ bounded by $K$ is topologically isotopic to a smooth disk, and two smooth disks in $M$ bounded by $K$ are topologically isotopic if and only if they are smoothly isotopic.
\end{corollaryalpha}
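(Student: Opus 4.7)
The map in question is well-defined, so I would prove surjectivity and injectivity separately. Surjectivity is immediate from Theorem~\ref{theorem:light-bulb-smoothing}: given any topological disk $D \subset M$ bounded by $K$, the hypothesis that the meridian of $K$ is null-homotopic in $\partial M \setminus K$ is exactly the light-bulb condition of that theorem, which yields a topological isotopy rel $\partial$ from $D$ to a smooth disk. So every topological isotopy class has a smooth representative.

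For injectivity, let $D_0, D_1 \subset M$ be smooth disks bounded by $K$ that are topologically isotopic rel $\partial$, and I seek a smooth isotopy. My plan is first to build a common geometric dual (``transverse'') 2-sphere for $D_0$ and $D_1$. Take a small normal disk $N$ to $D_0$ at a point $p \in K$, chosen so that $N$ lies inside $\partial M$; then $\partial N$ is a meridian $\mu$ of $K$. The meridian null-homotopy, combined with Dehn's lemma in the 3-manifold $\partial M$, produces an embedded disk $E \subset \partial M \setminus K$ bounded by $\mu$, and $G := N \cup_\mu E \subset \partial M \subset M$ is a smoothly embedded 2-sphere meeting $D_0$ transversely in the single point $p$ and disjoint from $D_1$ after a small perturbation (or else the analogous construction near a point of $K \cap D_1$ produces a transverse sphere to $D_1$ of the same form).

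With this in place, the topological isotopy from $D_0$ to $D_1$ is in particular a homotopy rel $\partial$, so I would invoke a smooth light-bulb theorem—Gabai's original result applied to $D_0 \cup_K \bar{D_1}$ viewed as a smooth 2-sphere in the double $DM = M \cup_{\partial M} \bar M$ with common transverse sphere $G$, or one of the disk-level refinements due to Schneiderman-Teichner and Kosanovi\'c-Teichner—to upgrade this homotopy to a smooth isotopy rel $\partial$. The main obstacle will be the last step: converting a smooth light-bulb isotopy in $DM$ into a smooth rel $\partial$ isotopy in $M$ itself. This requires arranging that the isotopy respects the decomposition $DM = M \cup \bar M$ and pointwise fixes $\partial M$ along $K$, and verifying that no smooth obstruction (notably the Dax invariant) interferes in the disk setting. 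Given the paper's announced program of linking Quinn's smoothing theory to the Gabai and Kosanovi\'c-Teichner light-bulb machinery, I expect the needed disk-level smooth light-bulb statement to be either a direct consequence of that framework or provable by a parallel argument using the transverse sphere $G$ constructed above.
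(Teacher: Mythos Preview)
Your surjectivity argument matches the paper's exactly. The injectivity argument, however, has a genuine gap at precisely the point you flag yourself.

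You reduce ``$D_0$ and $D_1$ are topologically isotopic'' to ``$D_0$ and $D_1$ are homotopic rel~$\partial$'' and then hope to invoke a smooth disk light-bulb theorem. But the Kosanovi\'c--Teichner classification does \emph{not} say that homotopic disks with a geometric dual are smoothly isotopic; it says that $\Dax^\sm(D_0,D_1)$ is the complete obstruction. By discarding the topological isotopy and keeping only the underlying homotopy, you have thrown away exactly the information needed to force this obstruction to vanish. Your doubling idea does not help: the sphere light-bulb theorem in $DM$ has its own obstruction (Freedman--Quinn), and even if that vanished you would still face the problem of pulling a smooth isotopy of spheres in $DM$ back to a rel~$\partial$ isotopy of disks in~$M$.

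The paper closes this gap with a new ingredient you have not anticipated: it proves directly (Lemma~\ref{lemma:smooth-dax-top-invariance}) that if $H$ is a \emph{topological isotopy} between smooth disks, then $\dax(H)=0$. The mechanism is elementary but not obvious: perturb the honest track $\widebar H$ slightly to a generic smooth track~$\widebar F$; since $\widebar F$ is close to the topological \emph{embedding}~$\widebar H$, every double point of $\widebar F$ has a short double-point loop, hence a null-homotopic one (Lemma~\ref{lemma:pi_1-null-approx}), so all contributions to $\dax$ are trivial in~$\pi_1$. This yields $\Dax^\sm(D_0,D_1)=0$, and \emph{then} Kosanovi\'c--Teichner gives the smooth isotopy. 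The paper packages this as a commutative triangle
\[
\begin{tikzcd}[row sep=tiny]
\cD^\sm_M(D_0) \ar[dd,"\Phi"'] \ar[rd,"{\Dax^\sm}"] \\
& \Z[\pi_1M\setminus 1]^\sigma/d(\pi_3M) \\
\cD^\top_M(D_0) \ar[ru,"{\Dax^\top}"']
\end{tikzcd}
\]
in which $\Dax^\sm$ is bijective and $\Phi$ is surjective, forcing $\Phi$ bijective. So the missing idea is not a clever use of existing light-bulb theorems but a new topological-isotopy invariance statement for the smooth Dax invariant.
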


We remark that the condition on the meridian in Corollary~\ref{corollary:top-equal-smooth-disk-isotopy} is an algebraic property in $\pi_1(\partial M\setminus K)$ but equivalent to the existence of a geometric dual sphere of $K$ in~$\partial M$ by the loop theorem.
See Lemma~\ref{lemma:dual-in-3-manifold}.
Here, a \emph{geometric dual sphere} of a connected submanifold $R$ of codimension~2, or a \emph{geometric dual} for short, is a framed embedded sphere $G$ such that $R$ intersects $G$ at exactly one point and the intersection is transverse.

We also obtain a ``topological = smooth'' result for spheres with a common geometric dual.

\begin{corollaryalpha}
  \label{corollary:top-equal-smooth-sphere-isotopy}
  Let $M$ be a smooth orientable 4-manifold and $G$ be a smooth framed sphere in~$M$.
  Then the following natural map is a bijection:
  \begin{multline*}
    \left\{\begin{tabular}{@{}c@{}}
      smooth spheres in $M$ having\\
      $G$ as a geometric dual
    \end{tabular}\right\}
    \Big/\,
    \begin{tabular}{@{}c@{}}
      smooth
      \\
      isotopy
    \end{tabular}
    \\
    \xrightarrow{\;\;\approx\;\;}
    \left\{\begin{tabular}{@{}c@{}}
      topological spheres in $M$ having\\
      $G$ as a geometric dual
    \end{tabular}\right\}
    \Big/\,
    \begin{tabular}{@{}c@{}}
      topological
      \\
      isotopy
    \end{tabular}
  \end{multline*}
\end{corollaryalpha}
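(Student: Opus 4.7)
The plan is to establish surjectivity from Theorem~\ref{theorem:light-bulb-smoothing} and injectivity by combining that smoothing result with the smooth 4-dimensional light bulb theorem of Gabai and its successive refinements by Schneiderman-Teichner and Kosanovi\'c-Teichner.

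For surjectivity, I would take a topological sphere $S \subset M$ having $G$ as a geometric dual. The paper's light bulb smoothing theorem is set up for precisely this situation---the presence of a geometric dual sphere being the hypothesis that enables topological smoothing---so Theorem~\ref{theorem:light-bulb-smoothing} should produce a topological isotopy from $S$ to a smooth sphere $S'$. I then need $G$ to remain a geometric dual of $S'$. Either the theorem delivers an isotopy supported in a regular neighborhood of $S$ that meets $G$ only at the original intersection point, or one may follow the isotopy by a small further topological perturbation near $S' \cap G$ to restore a single transverse intersection, the latter being possible because $S'$ is now smooth.

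For injectivity, suppose $S_0$ and $S_1$ are smooth spheres with common geometric dual $G$ that are topologically isotopic. The topological isotopy assembles into a topologically locally flat embedding $S^2 \times I \hookrightarrow M \times I$ with smooth ends $S_0, S_1$ at levels $0, 1$, carrying $G \times I$ as a level-preserving dual. I would aim to apply the paper's smoothing machinery in a relative or parametric form to this concordance to obtain a smooth concordance between $S_0$ and $S_1$. The smooth 4-dimensional light bulb theorem of Gabai, Schneiderman-Teichner, and Kosanovi\'c-Teichner then upgrades smooth concordance of smooth spheres sharing a common geometric dual to a smooth isotopy.

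The hardest step will be smoothing the concordance: it lives in the 5-manifold $M \times I$, whereas Theorem~\ref{theorem:light-bulb-smoothing} as stated is a 4-dimensional result. Either a parametric version of the smoothing theorem must be deployed, or one must stay in dimension 4 and argue through the topological Dax invariant introduced in this paper: a topological isotopy forces $\Dax^{\top}(S_0, S_1) = 0$, and if the natural comparison map from the smooth Dax invariant to the topological one is injective on homotopy classes of spheres sharing $G$, then $\Dax^{\sm}(S_0, S_1) = 0$ as well, whence $S_0$ and $S_1$ are smoothly isotopic by Kosanovi\'c-Teichner. In either route the crux is the precise matching between the smooth and topological isotopy theories of spheres with a common geometric dual, which is a central theme of the paper.
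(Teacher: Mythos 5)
Your surjectivity argument is the paper's: apply Theorem~\ref{theorem:light-bulb-smoothing}, and the concern about $G$ surviving as a geometric dual is handled exactly as you suggest (the paper's Remark~\ref{remark:light-bulb-smoothing-restricted-support} arranges the smoothing isotopy to be supported away from a neighborhood of $G$).

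The injectivity half has a genuine gap. Neither of your two proposed routes is viable as stated. Route one (smoothing the concordance $S^2\times I\hookrightarrow M\times I$) requires a relative $5$-dimensional smoothing theorem that the paper does not develop and that does not follow from Theorem~\ref{theorem:light-bulb-smoothing}. Route two invokes the wrong invariant: the Dax invariant and the Kosanovi\'c--Teichner classification govern \emph{disks} with a dual in the boundary; for \emph{spheres} with a common geometric dual the complete smooth isotopy/concordance invariant is the Freedman--Quinn invariant of Schneiderman--Teichner, valued in $\bF_2 T_M/\mu(\pi_3 M)$, not $\Z[\pi_1 M\setminus 1]^\sigma/d(\pi_3 M)$. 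The overall shape of your argument is nonetheless the right one, and the paper's actual proof realizes it with the correct invariant: it constructs a topological Freedman--Quinn invariant $\FQ^\top$ (Theorem~\ref{theorem:fq-top}, proved in the appendix via Lees' immersion theorem and Kirby--Siebenmann smoothing) that is invariant under topological concordance and agrees with $\FQ^\sm$ on smooth spheres. Then, for smooth $S_0,S_1$ with common dual $G$ that are topologically isotopic, one gets $\FQ^\sm(S_0,S_1)=\FQ^\top(S_0,S_1)=0$, and since $\FQ^\sm(R_0,-)$ is a \emph{bijection} on smooth isotopy classes by Schneiderman--Teichner, $S_0$ and $S_1$ are smoothly isotopic. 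Finally, one must first partition both sides of the map by homotopy class (choosing smooth representatives $R_\alpha$, which exist for every topological class by Theorem~\ref{theorem:light-bulb-smoothing}), since the Schneiderman--Teichner classification applies only to spheres homotopic to a fixed $R_0$; your write-up skips this reduction. To repair your proof you would need to replace the Dax invariant by $\FQ^\top$ and supply the comparison statement $\FQ^\top=\FQ^\sm$ on smooth spheres, which is the content of Theorem~\ref{theorem:fq-top}(2).
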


The following additional conventions apply throughout this paper.
All manifolds are oriented and 4-manifolds are connected.
Surfaces are connected and compact.
For a topological surface $R$ in a smooth 4-manifold~$M$, the restriction $\partial R \hookrightarrow \partial M$ is always isotopic to a smooth embedding, so we assume that $R$ is smooth near~$\partial R$.
Surfaces and their isotopy in a topological 4-manifold not equipped with a smooth structure mean topologically embedded surfaces and topological isotopy.

\subsection{Light bulb smoothing for surfaces}

A key idea in our approach is to link Quinn's smoothing theory for 4-manifolds~\cite{Quinn:1982-1,Quinn:1984-2,Freedman-Quinn:1990-1}, which is built on Freedman's work~\cite{Freedman:1982-1,Freedman:1984-1,Freedman-Quinn:1990-1}, with ideas in Gabai's recent work on the 4-dimensional light bulb theorem~\cite{Gabai:2020-1}.
Specifically, it leads us to the following main result.

\begin{theoremalpha}[Light bulb smoothing for surfaces]
  \label{theorem:light-bulb-smoothing}
  If a topological surface $R$ (with possibly empty boundary) in a smooth 4-manifold $M$ has a smooth geometric dual $G$, then $R$ is topologically isotopic to a smooth surface in $M$, via a topological isotopy supported in an arbitrary neighborhood of $R\cup G$.
\end{theoremalpha}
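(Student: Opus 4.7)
The plan is to combine topological tubular neighborhood theory with Quinn's smoothing theorem for 4-manifolds, using the smooth dual $G$ to trivialize the relevant Kirby-Siebenmann class. First, I would choose a topological $D^2$-bundle neighborhood $\nu R$ of $R$ in $M$ via Freedman-Quinn's normal bundle theorem for locally flat surfaces, and a smooth tubular neighborhood $\nu G$ of $G$, both contained in any prescribed neighborhood of $R\cup G$. After a preliminary topological ambient isotopy supported near the single intersection point $p \in R\cap G$, I would arrange that $\nu R\cap\nu G$ is a small 4-ball $B$ in which $R$ and $G$ appear as smoothly and transversely embedded coordinate 2-planes; in particular $R$ is smooth on $B$, and it is also smooth on a collar of $\partial R$ by the standing conventions.

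Let $N = \nu R \cup\nu G$. The ambient smooth structure of $M$ restricts to a smooth structure on a closed codimension-zero subset $C \subset N$ containing $\partial N$, the ball $B$, all of $\nu G$, and the collar of $\partial R$, which already makes the portion $R\cap C$ smooth. The next step is to produce a new smooth structure $\Sigma$ on $N$ agreeing with the ambient one on $C$ and in which all of $R$ is smooth. The relative Kirby-Siebenmann obstruction to such a smoothing lies in $H^4(N,C;\Z/2)$, which by Lefschetz duality is detected by transverse intersection with a smooth properly embedded surface in $N$ meeting $R\setminus C$. The smooth dual $G\cap N$ is such a surface, and it meets $R$ only at the single point $p\in C$, so the obstruction vanishes. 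Quinn's 4-dimensional smoothing theorem then furnishes the desired smoothing $\Sigma$, in which $R\subset N$ is smooth.

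Finally, Quinn's concordance-implies-isotopy theorem for smooth structures on 4-manifolds, applied rel $C$, yields a topological self-homeomorphism $h\colon N\to N$ fixing $C$ pointwise and topologically isotopic to the identity rel $C$, that identifies $\Sigma$ with the ambient smooth structure restricted to $N$. The image $h(R)$ is then a smooth surface with respect to the ambient smooth structure of $M$, and extending the ambient isotopy of $h$ by the identity outside $N$ produces a topological ambient isotopy of $M$, supported in $N$ (hence in the prescribed neighborhood of $R\cup G$), carrying $R$ to $h(R)$.

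The main obstacle I anticipate is engineering the setup so that the smooth dual $G$ genuinely computes the relative Kirby-Siebenmann obstruction via intersection with $R$. One must choose the subset $C$ and its smooth structure so that (i) Quinn's smoothing theorem applies with valid boundary data, (ii) any resulting smoothing of $N$ actually makes all of $R$ smooth and not merely the portion already smooth on $C$, and (iii) the ambient isotopy supplied by the concordance-implies-isotopy step can be made to fix both $G$ and $\partial M$. Resolving these points requires tailoring the smooth model on the local light-bulb ball $B$ around $p$, and is where the smoothness of $G$ (rather than merely its existence as a topological dual) is used essentially.
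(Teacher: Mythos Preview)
Your approach differs fundamentally from the paper's and contains a genuine gap. The paper does not try to resmooth the neighborhood $N$. Instead it applies Quinn's theorem [Freedman--Quinn, 8.1A] directly to the topological embedding $h\colon V\hookrightarrow M$ of the bundle neighborhood $V=\nu R$ (with its vector-bundle smooth structure, in which $R$ is already smooth): that theorem produces smooth finger moves in $V$ changing $R$ to an immersion $R'$, together with a topological isotopy of $h$ to an embedding $h_1$ that is \emph{smooth on a neighborhood of~$R'$}. The dual $G$ enters only afterward, via Gabai's light bulb trick: inside $V\cup\nu G$ with its bundle smooth structure, $R$ is smoothly isotopic to a surface $R''$ lying in an arbitrarily small neighborhood of $R'\cup G$, and since $h_1\cup\id_{\nu G}$ is smooth there, the image of $R''$ is smooth in $M$ and topologically isotopic to~$R$. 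So $G$ is not used to kill a cohomological obstruction; it supplies the geometric mechanism that converts Quinn's finger-move output back into an embedded surface by isotopy rather than regular homotopy.

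The fatal step in your outline is the appeal to a ``concordance-implies-isotopy theorem for smooth structures on $4$-manifolds.'' No such theorem exists; its failure is precisely what makes $4$-dimensional smoothing theory special. Kirby--Siebenmann prove concordant smoothings are isotopic in dimensions $\ge 5$, but in dimension $4$ this is false. You may be conflating it with Quinn's theorem that concordant \emph{homeomorphisms} of $4$-manifolds are topologically isotopic, which does not compare smooth structures. Your steps 3--4 have problems as well: in dimension $4$ the vanishing of the relative Kirby--Siebenmann class is necessary but not sufficient for a smoothing rel $C$ to exist, and your proposed computation of the class via intersection with $G$ does not make sense as stated (an element of $H^4(N,C;\Z/2)$ is not detected by intersection with a $2$-sphere, and in any case $G\subset C$). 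Finally, even granting a smoothing $\Sigma$ of $N$ rel $C$, nothing in the outline forces the locally flat surface $R$ to be smooth in $\Sigma$; you flag this yourself as anticipated obstacle~(ii), but it is not a detail to be engineered around---it is the entire content of the theorem.
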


As in Corollary~\ref{corollary:top-equal-smooth-disk-isotopy}, the geometric dual condition in this statement can be reformulated to an algebraic condition: the following is equivalent to the above statement.

\begin{theorem-named}
  [An alternative form of Theorem~\ref{theorem:light-bulb-smoothing}]
  Let $R$ be a topological surface in a smooth 4-manifold~$M$.
  If a boundary component $K$ of $R$ has a meridian null-homotopic in $\partial M\setminus K$, then $R$ is topologically isotopic to a smooth surface in~$M$, via a topological isotopy supported in an arbitrary neighborhood of $R\cup \partial M$.
\end{theorem-named}

Note that the surjectivity in Corollaries~\ref{corollary:top-equal-smooth-disk-isotopy} and~\ref{corollary:top-equal-smooth-sphere-isotopy}\@ follows immediately from Theorem~\ref{theorem:light-bulb-smoothing}\@.
We also use Theorem~\ref{theorem:light-bulb-smoothing} to develop ingredients of the proof of the injectivity.

\subsection{Smoothing surfaces in a stabilization}

As another application of Theorem~\ref{theorem:light-bulb-smoothing}, we obtain a smoothing result for arbitrary topological surfaces \emph{without the need for a dual}, at the cost of $(S^2\times S^2)$-stabilizations of the ambient 4-manifold.

\begin{theoremalpha}[Stable smoothing for surfaces]
  \label{theorem:stable-smoothing}
  \sloppy
  A topological surface $R$ in a smooth 4-manifold $M$ is topologically isotopic to a smooth embedding in $M\#k(S^{2}\times S^{2})$ for some~$k$.
\end{theoremalpha}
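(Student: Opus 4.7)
The plan is to reduce to Theorem~\ref{theorem:light-bulb-smoothing} via a drilling argument, combining the alternative boundary form of that theorem with Corollary~\ref{corollary:top-equal-smooth-disk-isotopy}: decompose $R$ into an exterior piece and a small disk near an interior point, smooth each by an application of the light-bulb theory using the null-homotopic meridian of an unknot, and then assemble the two isotopies into one global topological isotopy in a stabilization.

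Concretely, pick an interior point $p\in R$ and a small smooth $4$-ball $B\subset M$ centered at $p$. By the local flatness of $R$ at $p$, we may arrange that $R\cap B$ is a locally flat topological disk $D_R\subset B$ with boundary $K=\partial D_R$ an unknot in $S^3=\partial B$. This yields the decomposition $R=(R\setminus\inte B)\cup D_R$, the two pieces meeting along~$K$.

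On the exterior piece $R\setminus\inte B\subset M\setminus\inte B$, the boundary component $K$ is unknotted in $S^3$, so its meridian is null-homotopic in $\partial(M\setminus\inte B)\setminus K$. The alternative form of Theorem~\ref{theorem:light-bulb-smoothing} then produces a topological isotopy of $R\setminus\inte B$ to a smooth surface $R'_{\mathrm{out}}$ in $M\setminus\inte B$. In parallel, Corollary~\ref{corollary:top-equal-smooth-disk-isotopy} applied to $D_R\subset B\cong B^4$ (whose boundary $K$ again has null-homotopic meridian in $S^3\setminus K$) produces a topological isotopy of $D_R$ to a smooth disk $D'_R$ in~$B$.

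For these two isotopies to patch into a single ambient topological isotopy of $R$ ending at the smooth surface $R'_{\mathrm{out}}\cup D'_R$, they must agree on the common sphere $\partial B$. Each is only supported in a neighborhood of its respective surface and boundary, so a priori they induce different self-isotopies of $\partial B$ rel~$K$. The role of the $S^2\times S^2$-stabilizations, whose connected-sum balls we may take inside the support region near $\partial B$, is precisely to absorb this discrepancy: after $k$ stabilizations, the extra smooth spheres supply the flexibility needed to modify the two isotopies so that they agree on $\partial B$, yielding the desired topological isotopy of $R$ in $M\#k(S^2\times S^2)$ to a smooth embedding. The principal obstacle in the argument, and the step where the stabilization is essential, is this reconciliation of boundary behavior along $\partial B$.
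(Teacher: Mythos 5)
Your argument breaks down at the very first application of the light bulb machinery. You claim that since $K=\partial D_R$ is an unknot in $S^3=\partial B$, its meridian is null-homotopic in $\partial(M\setminus\inte B)\setminus K$. This is false: the complement of the unknot in $S^3$ is an open solid torus with $\pi_1(S^3\setminus K)\cong\Z$ generated precisely by the meridian. Equivalently (Lemma~\ref{lemma:dual-in-3-manifold}), $K$ has no geometric dual sphere in $S^3$ — indeed no embedded circle in $S^3$ can meet an embedded sphere transversally in a single point, since every sphere in $S^3$ separates. So neither the alternative form of Theorem~\ref{theorem:light-bulb-smoothing} nor Corollary~\ref{corollary:top-equal-smooth-disk-isotopy} applies to either piece of your decomposition, and the two smoothings you want do not exist.

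There is also a structural sanity check showing the strategy cannot be repaired by patching along $\partial B$: the isotopies in Theorem~\ref{theorem:light-bulb-smoothing} and Corollary~\ref{corollary:top-equal-smooth-disk-isotopy} are rel~$\partial$, so if both pieces were smoothable rel~$K$ in $M\setminus\inte B$ and in $B$ respectively, gluing the two isotopies along the fixed curve $K$ would smooth $R$ inside $M$ itself with no stabilization — contradicting the existence of non-smoothable topological surfaces (e.g.\ topological slice disks for knots that are not smoothly slice). So the "discrepancy on $\partial B$" you propose to absorb by stabilization is not where the obstruction lives. The actual role of the $S^2\times S^2$ summands in the paper's proof is quite different: one first applies Quinn's theorem to turn $R$ into a smooth immersion $R^+$ by small topological finger moves, producing topological Whitney disks $W_i$; one then shows (via the controlled version of Quinn's theorem and an analysis of double point loops) that in a stabilization the $W_i$ acquire embedded framed geometric dual spheres, so that Theorem~\ref{theorem:light-bulb-smoothing} can be applied \emph{to the Whitney disks} to make them smooth, after which the finger moves are undone smoothly by Whitney moves. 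The stabilization supplies duals for the Whitney disks, not a correction term on a separating $3$-sphere.
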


Here, the connected sum with $k(S^2\times S^2)$ is understood as taken away from~$R$ so that $R$ can be regarded as a topological surface in $M \# k(S^2\times S^2)$.

Theorem~\ref{theorem:stable-smoothing} also holds when we allow $R$ to have multi-components.
See Section~\ref{section:proof-stable-smoothing}.

The proof of Theorem~\ref{theorem:stable-smoothing} relies on the Light Bulb Smoothing Theorem~\ref{theorem:light-bulb-smoothing} and a controlled version of Quinn's smoothing theory.
Briefly speaking, Quinn showed that a topological surface can be changed to a smooth immersion by controlled topological finger moves, i.e., finger moves with arbitrarily small diameter.
We show that when the diameter is small enough, the topological Whitney disks introduced by the finger moves are \emph{isotopic} to smooth Whitney disk after stabilization, using the Light Bulb Smoothing Theorem~\ref{theorem:light-bulb-smoothing}\@.
This enables us to smoothly undo the topological finger moves.
Details are in Section~\ref{section:proof-stable-smoothing}.

Stabilization of the ambient 4-manifold in Theorem~\ref{theorem:stable-smoothing} is necessary since there exist non-smoothable topological surfaces in smooth 4-manifolds.
For example, it is well known that there are knots $K$ in $S^3$ which are topologically slice but not smoothly slice.
That is, $K$ bounds a topological disk in $D^4$ but bounds no smooth disk in~$D^4$.
By Schneiderman~\cite{Schneiderman:2010-1}, if a knot $K$ has vanishing Arf invariant, $K$ is smoothly slice in $D^4\# k(S^2\times S^2)$ for some~$k$.
In particular, when $K$ bounds a topological slicing disk in $D^4$, $K$ bounds a smooth slicing disk in $D^4\# k(S^2\times S^2)$, but this is not isotopic to a given topological slicing disk in general.
Theorem~\ref{theorem:stable-smoothing} readily gives the following:

\begin{corollaryalpha}
  \label{corollary:stable-smoothing-slice-disk}
  Let $K$ be a topologically slice knot in $S^3$.
  Then any topological slicing disk for $K$ in $D^4$ is topologically isotopic to a smooth slicing disk in $D^4\# k(S^2\times S^2)$ for some~$k$.
\end{corollaryalpha}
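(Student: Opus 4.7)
The plan is to invoke Theorem~\ref{theorem:stable-smoothing} directly. Let $K \subset S^3$ be a topologically slice knot, and let $D \subset D^4$ be a topological slicing disk with $\partial D = K$. I would apply Theorem~\ref{theorem:stable-smoothing} to the pair $(M,R) = (D^4, D)$, which will produce an integer $k \geq 0$ and a topological isotopy carrying $D$ to a smooth embedding $D'$ in $D^4 \# k(S^2 \times S^2)$.

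After that, the only task is to check that $D'$ is actually a smooth slicing disk for $K$, which comes down to two routine verifications. First, the paper's convention that surface isotopies are rel~$\partial$, together with the fact that $\partial\bigl(D^4 \# k(S^2 \times S^2)\bigr) = S^3$ (since the stabilizations happen in the interior), ensures that $\partial D' = K$. Second, the paper's convention that the stabilization is taken away from $R$ means that $D$ initially sits untouched inside $D^4 \# k(S^2 \times S^2)$, so the topological isotopy acts on an honest embedded disk and produces another embedded disk, namely $D'$; in particular $D' \cong D^2$, so it is a smooth slicing disk.

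I do not expect any real obstacle here: the entire content of the corollary is packaged inside Theorem~\ref{theorem:stable-smoothing}, and what remains is bookkeeping about boundaries and about the location of the connected sum region. The cited theorem of Schneiderman~\cite{Schneiderman:2010-1} is not needed for the argument itself---it is referenced in the paragraph preceding the corollary only to contrast the weaker smooth existence result (a smooth slicing disk exists in some stabilization for $\Arf(K)=0$ knots) with the stronger isotopy statement provided here, which upgrades an \emph{arbitrary} topological slicing disk to a smooth one via topological isotopy and therefore applies without any Arf hypothesis.
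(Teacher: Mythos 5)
Your proposal is correct and matches the paper exactly: the paper derives Corollary~\ref{corollary:stable-smoothing-slice-disk} as an immediate consequence of Theorem~\ref{theorem:stable-smoothing} applied to $(M,R)=(D^4,D)$, with the same bookkeeping about the rel~$\partial$ convention and the stabilization being taken away from the disk. Your remark that the Schneiderman citation is only for contrast, not part of the argument, is also accurate.
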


Motivated by Theorem~\ref{theorem:stable-smoothing} and Corollary~\ref{corollary:stable-smoothing-slice-disk}, it appears to be intriguing to study the minimal number $k$ of ambient 4-manifold stabilization required to smoothen a given topological disk or more generally a surface.

\subsection{Topological applications of surface smoothing}

For topological manifolds, the seminal work of Kirby and Siemenmann~\cite{Kirby-Siebenmann:1977-1} establishes fundamental tools such as transversality and handle decomposition in high dimensions by developing a smoothing theory that allows us to import theorems for smooth manifolds to the topological case.
Dimension 4 is more sophisticated, but based on Freedman's work and subsequent developments, the same approach shows that fundamental theorems work, to a large extent, for topological 4-manifolds~\cite{Freedman-Quinn:1990-1}.

Our smoothing technique enables us to employ a similar approach for topological surfaces in 4-manifolds: Theorems~\ref{theorem:light-bulb-smoothing} and~\ref{theorem:stable-smoothing} isotope topological surfaces to smooth surfaces, to which we can apply smooth techniques, to import smooth results into topological cases.
It leads us to several applications.
We discuss some of them below.

\subsubsection*{Topological Dax invariant}

Recently, an invariant of Dax in~\cite{Dax:1972-1} has received significant attention, owing to work of Gabai~\cite{Gabai:2021-1}, Kosanovi\'c-Teichner~\cite{Kosanovic-Teichner:2024-1,Kosanovic-Teichner:2024-2} and Schwartz~\cite{Schwartz:2021-1}.
They used the Dax invariant of smooth disks to study 4-dimensional light bulb theorems and generalizations.
In the known approach to the Dax invariant, the smoothness of disks is essential and it appears to be difficult, if not impossible, to carry it out in the topological case (see Remark~\ref{remark:dax-why-smoothing-approach} for a related discussion).
In this paper, we use our smoothing technique to generalize the Dax invariant to topological disks.
This topological version of the Dax invariant seems to be of independent interest, and it is also an ingredient of the proof of Corollary~\ref{corollary:top-equal-smooth-disk-isotopy}\@.

\begin{theoremalpha}
  \label{theorem:dax-top}
  Let $M$ be a topological 4-manifold and $K$ be a circle embedded in~$\partial M$.
  Fix an arbitrary almost smooth structure of~$M$.
  Then there is a function
  \[
    \Dax^\top\colon \Big\{ (D_0,D_1)\,\Big|\,
    \begin{tabular}{@{}c@{}}
      $D_0$, $D_1$ are topological disks in $M$\\
      bounded by $K$, homotopic rel $\partial$ 
    \end{tabular} \Big\}
    \to \Z[\pi_1M\setminus 1]^\sigma  / d(\pi_3 (M))
  \]
  satisfying the following.
  \begin{enumerate}
    \item $\Dax^\top(D_0,D_1)$ is determined by the topological isotopy classes of $D_0$ and $D_1$.
    If $D_0$ and $D_1$ are topologically isotopic, $\Dax^\top(D_0,D_1)=0$.
    \item $\Dax^\top(D_0,D_1)$ is equal to the value of the Dax invariant defined for smooth disks in~\cite{Gabai:2021-1,Kosanovic-Teichner:2024-1,Kosanovic-Teichner:2024-2,Schwartz:2021-1} if $D_0$ and $D_1$ are smooth with respect to the almost smooth structure.
  \end{enumerate}
\end{theoremalpha}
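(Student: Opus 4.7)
The plan is to reduce the definition of $\Dax^\top$ to the smooth Dax invariant after smoothing the disks via the Stable Smoothing Theorem~\ref{theorem:stable-smoothing}. The chosen almost smooth structure makes $M$ smooth away from a single point, so after a small preliminary topological isotopy we may assume $D_0, D_1$ lie in the smooth region. Applying Theorem~\ref{theorem:stable-smoothing} to each disk produces topological isotopies carrying $D_i$ to smooth disks $D_i' \subset M' := M \# k(S^2\times S^2)$ for a common $k$, with the stabilizations taken in small balls away from the disks. Using the canonical isomorphism $\pi_1 M' \cong \pi_1 M$, I would set $\Dax^\top(D_0, D_1)$ to be the image of the smooth invariant $\Dax(D_0', D_1')$ in $\Z[\pi_1 M\setminus 1]^\sigma/d(\pi_3 M)$.

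For this to be well defined, I would first verify that $d(\pi_3 M')\subseteq d(\pi_3 M)$ inside $\Z[\pi_1 M\setminus 1]^\sigma$: any $\pi_3$-class contributed by the stabilization summands admits a representative supported in a simply connected neighborhood of the $S^2\times S^2$ factors, so each double point produces a group element trivial in $\pi_1 M$, which vanishes modulo the identity. Independence of the smoothing choice is the central step. By additivity of the smooth Dax invariant, the change $\Dax(D_0',D_1')\mapsto\Dax(D_0'',D_1'')$ under a change of smoothings reduces to bounding each single-disk discrepancy $\Dax(D_i',D_i'')$, where $D_i',D_i''$ are two smoothings of the same topological $D_i$ after passing to a common stabilization. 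I would realize this difference as the $d$-image of an element of $\pi_3 M'$: a smooth regular homotopy from $D_i'$ to $D_i''$ together with any chosen reference homotopy yields a loop in the space of maps $(D^2,\partial D^2)\to(M',K)$, and since $K$ is null-homotopic in $M'$ the $\pi_1$ of this mapping space is $\pi_3 M'$; the $d$-image of this class is precisely $\Dax(D_i',D_i'')$. Combined with the indeterminacy calculation, the discrepancy lies in $d(\pi_3 M)$, and $\Dax^\top$ descends to topological isotopy classes.

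Item~(1) then follows immediately: topologically isotopic pairs admit smoothings connected by a further composition of topological isotopies and so reduce to the independence just established, and the vanishing for topologically isotopic $D_0, D_1$ is immediate. Item~(2) is clear by taking $k=0$ and the trivial smoothing when $D_0, D_1$ are already smooth with respect to the chosen almost smooth structure. The main obstacle will be constructing a \emph{smooth} regular homotopy between two \emph{topologically} isotopic smoothings $D_i'$ and $D_i''$, since only a topological isotopy between them is available a priori. Overcoming this amounts to smoothing the trace of the topological isotopy in $M'\times I$, likely by adapting the controlled smoothing of Theorem~\ref{theorem:stable-smoothing} to the $5$-dimensional trace after a further stabilization, so that the $\pi_3$-class detecting the Dax difference is realized by a smooth generic immersion whose double-point count is directly computable.
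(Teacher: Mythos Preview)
Your overall framework---stabilize, smooth the disks via Theorem~\ref{theorem:stable-smoothing}, and define $\Dax^\top$ through the smooth invariant of the smoothings---is exactly the paper's strategy. But the independence-of-smoothing step has a genuine gap, and your proposed fix does not close it.

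Your $\pi_3$ argument is circular. You write that a smooth regular homotopy $F$ from $D_i'$ to $D_i''$, together with a reference homotopy, produces a class $\alpha\in\pi_3 M'$ with $d(\alpha)=\Dax(D_i',D_i'')$. But $\Dax(D_i',D_i'')$ is already only defined modulo $d(\pi_3 M')$, so this says nothing. Unwinding, if $G$ is the topological isotopy $D_i'\approx D_i''$ and $F$ is any smooth homotopy, then $d(\alpha)=\dax(G)-\dax(F)$, whence $\Dax(D_i',D_i'')\equiv\dax(G)$ mod $d(\pi_3 M')$. So what you actually need is $\dax(G)=0$ for the topological isotopy $G$ itself. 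Your backup plan---smoothing the trace of $G$ inside $M'\times I$---would at best produce a smooth concordance, not a homotopy of disks with controllable $\dax$, and in any case requires machinery far beyond what is needed.

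The paper resolves this with a short, elementary observation you are missing (Lemma~\ref{lemma:smooth-dax-top-invariance}): if the honest track $\widebar G\colon I^3\to M\times I^2$ is a topological embedding, then $\dax(G)=0$. The proof is immediate from Lemma~\ref{lemma:pi_1-null-approx}: a sufficiently close smooth generic perturbation of a topological embedding has all double point loops null-homotopic, so every term $g(p)$ in the definition of $\dax$ is trivial. This single lemma handles both independence of smoothing (the isotopies $h_i$, $g_i$ between $D_i$ and its smoothings contribute nothing) and item~(1) directly. The paper also sidesteps your $d(\pi_3 M')\subseteq d(\pi_3 M)$ comparison entirely: it chooses the homotopy $H\colon D_0\simeq D_1$ inside $M\smallsetminus q$ rather than in $M'$, and a reparametrization trick (from \cite{Kosanovic-Teichner:2022-2}) shows that after cancelling the $h_i$ contributions the ambiguity from the choice of $H$ is represented by a kernel map landing in $M$, hence lies in $d(\pi_3 M)$.
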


Here, an almost smooth structure of $M$ means a smooth structure on the complement of an interior point~$p_0$.
It is known that every topological 4-manifold admits an almost smooth structure~\cite[8.2]{Freedman-Quinn:1990-1}.
When we say that a disk is smooth with respect to an almost smooth structure, we implicitly assume that the disk does not contain the singular point~$p_0$.
For a description of the range of $\Dax^\top$, see \cite{Gabai:2021-1,Kosanovic-Teichner:2024-2} or Section~\ref{subsection:preliminary-smooth-dax}.

Briefly speaking, we define $\Dax^\top(D_0,D_1)$ to be the value of the smooth Dax invariant of \emph{smoothings} of $D_0$ and $D_1$ in a stabilization of~$M$.
The Stable Smoothing Theorem~\ref{theorem:stable-smoothing} ensures the existence of smoothings of the disks.
For more details, see Sections~\ref{subsection:dax-for-top-disks} and~\ref{subsection:dax-in-top-4-manifold}.
It is noteworthy that the definition works for all pairs of homotopic disks without altering a given (almost) smooth structure of~$M$.
This is essential when we compare topological and smooth isotopy in a smooth 4-manifold.
For instance, the proof of Corollary~\ref{corollary:top-equal-smooth-disk-isotopy} benefits from this.

We remark that if a meridian of $\partial D_0$ is null-homotopic in $\partial M\setminus \partial D_0$, or equivalently if $D_0$ has a geometric dual in $\partial M$, then no stabilization is needed to define $\Dax^\top(D_0,D_1)$ for topological disks, since each $D_i$ is smoothable in $M$ by the Light Bulb Smoothing Theorem~\ref{theorem:light-bulb-smoothing}\@.

As another application of Theorem~\ref{theorem:dax-top}, we prove that there exist topologically knotted smooth 3-balls in a 4-manifold.
This generalizes Gabai's result in the smooth category~\cite[Theorems~0.8 and~5.1]{Gabai:2021-1}.

\begin{corollaryalpha}
  \label{corollary:knotted-3-disk}
  Let $M = (S^2\times D^2) \mathbin{\mathord{\#}_\partial} (S^1\times D^3)$ and $\Delta_0 = *\times D^3\subset M$ be the 3-ball in the $S^1\times D^3$ factor.
  Then there exist infinitely many smooth 3-balls properly embedded in $M$ which are homotopic to $\Delta_0$ rel $\partial$ but not pairwise topologically isotopic rel~$\partial$.
\end{corollaryalpha}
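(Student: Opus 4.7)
The plan is to lift Gabai's smooth distinction of 3-balls \cite[Theorems~0.8 and~5.1]{Gabai:2021-1} to the topological category by associating to each 3-ball an equatorial 2-disk and invoking the topological Dax invariant of Theorem~\ref{theorem:dax-top}.

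First I would fix an equatorial circle $c$ on the 2-sphere $\partial \Delta_0 \subset \partial M$. For any properly embedded 3-ball $\Delta \subset M$ with $\partial \Delta = \partial \Delta_0$, the circle $c$ bounds a 2-disk $D(\Delta) \subset \Delta$ (since $\Delta \cong D^3$), well-defined up to ambient isotopy in $\Delta$ rel $c$. The crucial point is that any topological ambient isotopy of such 3-balls rel $\partial$ restricts to a topological ambient isotopy of the associated equatorial 2-disks in $M$ rel $c$; thus to show that the 3-balls $\Delta_n$ produced below are pairwise topologically non-isotopic, it suffices to show the same for the disks $D(\Delta_n)$.

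Next I would invoke Gabai's smooth construction to obtain a sequence of smooth 3-balls $\Delta_n \subset M$ ($n \in \Z$), all homotopic to $\Delta_0$ rel $\partial$, with the property that the associated smooth equatorial disks $D_n := D(\Delta_n)$ are pairwise homotopic rel $\partial$ and pairwise distinguished by the smooth Dax invariant $\Dax(D_m, D_n) \in \Z[\pi_1 M \setminus 1]^\sigma / d(\pi_3 M)$. This is essentially the mechanism by which Gabai distinguishes the $\Delta_n$ smoothly: the Dax-type obstruction he employs lives on an associated 2-disk, which can be arranged to coincide with the equatorial disk.

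Finally I would apply Theorem~\ref{theorem:dax-top} to each pair $(D_m, D_n)$, taking the given smooth structure of $M$ as the almost smooth structure (no singular point is needed since $M$ is already smooth). Part~(2) gives $\Dax^\top(D_m, D_n) = \Dax(D_m, D_n) \neq 0$ for $m \neq n$, and Part~(1) then forces $D_m$ and $D_n$ to be topologically non-isotopic rel $\partial$. Combined with the restriction property from the first step, this shows that the $\Delta_n$ are pairwise topologically non-isotopic in $M$ rel $\partial$. The main task requiring verification is the second step: confirming that Gabai's 3-balls are distinguished smoothly via the Dax invariants of their equatorial disks and that these invariants realize infinitely many distinct values in the target group. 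Once this is set up, the topological upgrade is an immediate application of Theorem~\ref{theorem:dax-top}.
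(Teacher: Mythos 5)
Your overall strategy---distinguish the 3-balls by attaching to each one a 2-disk and then applying the topological isotopy invariance of $\Dax^\top$ from Theorem~\ref{theorem:dax-top}---is the right general shape, but the specific reduction you propose has a genuine gap, and it rests on a mischaracterization of Gabai's argument. In Gabai's construction (and in the paper's proof), the 2-disks carrying the Dax information are \emph{not} equatorial disks of the 3-balls: they are the disks $D_i=\psi_i(D_0)$ with $D_0=*\times D^2$ in the \emph{other} summand $S^2\times D^2$, hence \emph{disjoint} from the 3-balls $\Delta_i=\psi_i(\Delta_0)$. Gabai's Lemma~5.3 gives $\Dax^\sm(D_i,D_j)\ne 0$ for these complementary disks; it says nothing about equatorial disks of the $\Delta_i$. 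So your ``second step''---that the equatorial disks $D(\Delta_n)$ are pairwise Dax-distinct---is not a verification detail to be deferred; it is the entire content of the argument, it is not established anywhere, and there is no apparent reason it should hold (the Dax invariant of $(D,\psi(D))$ depends on $D$, and nothing forces the diffeomorphisms $\psi_i$ to act nontrivially on the equatorial disk in the $S^1\times D^3$ summand in a Dax-detectable way).

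Because the detecting disk is disjoint from the 3-ball, the paper has to do genuinely more work than your step~1, and this is where the other main ingredient enters. Supposing $\Delta_i$ is topologically isotopic to $\Delta_j$, the ambient isotopy carries $D_i$ to a topological disk $D'$ lying in the exterior $N_j=\psi_j(N_0)\cong S^2\times D^2$ of $\Delta_j$; a $\pi_2$/$H_2$ argument shows $D'$ and $D_j$ are homotopic rel~$\partial$ in $N_j$; and then, since $\pi_1(N_j)=0$, Corollary~\ref{corollary:disk-top-isotopy-classification} (the full topological classification of disks, which relies on Kosanovi\'c--Teichner and the Light Bulb Smoothing Theorem, not merely on Theorem~\ref{theorem:dax-top}) forces $D'$ to be topologically isotopic to $D_j$ in $N_j$. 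Only then does one conclude $D_i\simeq_{\mathrm{top}} D_j$ in $M$, whence $\Dax^\sm(D_i,D_j)=\Dax^\top(D_i,D_j)=0$ by Theorem~\ref{theorem:dax-top-isotopy}, a contradiction. Your step~1 (restriction of an isotopy of 3-balls to their equatorial disks) is sound as far as it goes, but it cannot be combined with the input that actually exists in the literature; to repair the proof you would need either to prove the unestablished claim about equatorial disks or to switch to the complementary-disk argument just described.
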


We give more applications of the topological Dax invariant in Corollaries~\ref{corollary:disk-top-isotopy-classification} and~\ref{corollary:schwartz-top}.

\subsubsection*{Topological light bulb theorems}

Our approaches and results also lead us to straightforward proofs of topological category versions of recently discovered smooth 4-dimensional light bulb theorems and related results.
As an example, in a topological 4-manifold, we obtain an isotopy classification of homotopic disks with a common geometric dual in the boundary, in terms of the Dax invariant, by combining a smooth category result of Kosonovi\'c and Teichner~\cite{Kosanovic-Teichner:2024-2} with our method.

\begin{corollaryalpha}
  \label{corollary:disk-top-isotopy-classification}
  Let $M$ be a topological 4-manifold and $D_0$ be a disk in $M$ such that a meridian of $\partial D_0$ is null-homotopic in $\partial M \setminus \partial D_0$.
  Then the topological Dax invariant gives rise to a bijection
  \[
    \Dax^\top(D_0,-)\colon
    \left\{\begin{tabular}{@{}c@{}}
      disks in $M$ homotopic\\
      to $D_0$ rel~$\partial$
    \end{tabular}\right\}
    \Big/
    \text{isotopy}
    \to
    \Z[\pi_1M\setminus 1]^\sigma / d(\pi_3M).
  \]
  In particular, $D_0$ and $D_1$ are isotopic if and only if $D_0$ and $D_1$ are homotopic rel~$\partial$ and $\Dax^\top(D_0, D_1)=0$.
\end{corollaryalpha}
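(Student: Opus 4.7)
The plan is to reduce to the smooth classification theorem of Kosanovi\'c and Teichner \cite{Kosanovic-Teichner:2022-2} by smoothing every topological disk via the Light Bulb Smoothing Theorem~\ref{theorem:light-bulb-smoothing}, and then identifying $\Dax^\top$ with the smooth $\Dax$ using Theorem~\ref{theorem:dax-top}(2). First, fix an almost smooth structure on $M$ whose singular point $p_0$ lies in the interior, away from $D_0$ and from any disk $D_1$ under consideration, which is possible by general position since disks have codimension~$2$. By Lemma~\ref{lemma:dual-in-3-manifold}, the meridian hypothesis supplies a smooth framed geometric dual sphere $G$ for $D_0$ situated in a collar of $\partial M$, and this $G$ is simultaneously a geometric dual of every disk $D_1$ homotopic to $D_0$ rel $\partial$ because $\partial D_1 = \partial D_0$. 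Applying Theorem~\ref{theorem:light-bulb-smoothing} in the smooth 4-manifold $M\setminus\{p_0\}$ with support disjoint from $p_0$ topologically isotopes $D_0$ and each such $D_1$ rel $\partial$ to smooth disks $D_0^{\sm}$ and $D_1^{\sm}$, still having $G$ as a geometric dual.

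By Theorem~\ref{theorem:dax-top}(1)--(2),
\[
  \Dax^\top(D_0, D_1) = \Dax^\top(D_0^{\sm}, D_1^{\sm}) = \Dax(D_0^{\sm}, D_1^{\sm}),
\]
the smooth Dax invariant. The smooth classification of Kosanovi\'c--Teichner, applied to smooth disks in $M\setminus\{p_0\}$ sharing the dual $G$ and homotopic to $D_0^{\sm}$ rel $\partial$, asserts that $\Dax(D_0^{\sm}, -)$ is a bijection onto $\Z[\pi_1 M \setminus 1]^\sigma/d(\pi_3 M)$. Surjectivity of $\Dax^\top(D_0,-)$ then follows: given $\alpha$ in the target, realize it by a smooth disk $D_1'$ via Kosanovi\'c--Teichner, and $\Dax^\top(D_0, D_1') = \Dax(D_0^{\sm}, D_1') = \alpha$. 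For injectivity, $\Dax^\top(D_0, D_1) = \Dax^\top(D_0, D_1')$ forces equality of the smooth Dax invariants $\Dax(D_0^{\sm}, D_1^{\sm}) = \Dax(D_0^{\sm}, (D_1')^{\sm})$, so Kosanovi\'c--Teichner produces a smooth isotopy rel $\partial$ from $D_1^{\sm}$ to $(D_1')^{\sm}$, which concatenated with the two Light Bulb topological isotopies yields a topological isotopy from $D_1$ to $D_1'$.

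The main obstacle is to ensure that the smooth Dax classification performed in the open smooth manifold $M\setminus\{p_0\}$ agrees with the target group $\Z[\pi_1 M \setminus 1]^\sigma/d(\pi_3 M)$ parametrizing $\Dax^\top$; this is built into the definition and naturality of $\Dax^\top$ in Theorem~\ref{theorem:dax-top}, which in the presence of the meridian hypothesis requires no $(S^2 \times S^2)$-stabilization as noted in the discussion following that theorem. Once this identification is in place, the argument is essentially a diagram chase through the smoothings produced by Theorem~\ref{theorem:light-bulb-smoothing}.
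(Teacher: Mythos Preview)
Your proposal is correct and follows essentially the same route as the paper: smooth the topological disks via the Light Bulb Smoothing Theorem~\ref{theorem:light-bulb-smoothing}, identify $\Dax^\top$ with the smooth Dax invariant via Theorem~\ref{theorem:dax-top}, and invoke the Kosanovi\'c--Teichner bijection. The paper organizes the argument as a two-step diagram chase---first proving the result when $M$ is genuinely smooth (so $\Phi\colon\cD^\sm_M(D_0)\to\cD^\top_M(D_0)$ is shown to be a bijection), and then reducing the general topological case to $M\setminus p_0$ via a second diagram---whereas you merge both steps into a single pass in $M\setminus\{p_0\}$; the content is the same. Your remark that the identification of target groups $d(\pi_3(M\setminus p_0))=d(\pi_3 M)$ is ``built into'' Theorem~\ref{theorem:dax-top} is correct, though in the paper this is isolated as Lemma~\ref{lemma:dax-homomorphism-on-embedded-3-sphere}, and the phrasing ``$p_0$ \ldots\ away from any disk $D_1$ under consideration'' should more precisely read that each individual $D_1$ is first isotoped off~$p_0$ (cf.\ Lemma~\ref{lemma:isotopy-avoiding-points}).
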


The following is an analogous classification for homotopic spheres in a topological 4-manifold with a common geometric dual.
We obtain it by combining our results with work of Schneiderman and Teichner~\cite{Schneiderman-Teichner:2022-1} which generalizes the light bulb theorem of Gabai~\cite{Gabai:2020-1}.

\begin{corollaryalpha}
  \label{corollary:sphere-top-isotopy-classification}
  Let $M$ be a topological 4-manifold and $R_0$ be a sphere in $M$ with a geometric dual~$G$.
  Then there is a bijection
  \[
    \left\{\begin{tabular}{@{}c@{}}
      spheres in $M$ homotopic to $R_0$\\
      having $G$ as a geometric dual
    \end{tabular}\right\}
    \Big/
    \text{isotopy}
    \to
    \bF_2 T_M / \mu(\pi_3(M))
  \]
  induced by a topological version of the Freedman-Quinn invariant.
\end{corollaryalpha}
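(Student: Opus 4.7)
The plan is to reduce this topological classification to the smooth-category classification of Schneiderman--Teichner~\cite{Schneiderman-Teichner:2022-1} via the Light Bulb Smoothing Theorem~\ref{theorem:light-bulb-smoothing}. Fix an almost smooth structure on $M$ and, by a preliminary topological isotopy, arrange $R_0\cup G$ to miss the singular point, so the ambient manifold may be treated as smooth in a neighborhood of $R_0\cup G$.

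Next I would smooth the dual $G$. Since $G$ meets $R_0$ transversely at a single point, a local smoothing of $R_0$ near this point (available from Quinn's smoothing theory applied inside a small 4-ball, or equivalently via the alternative form of Theorem~\ref{theorem:light-bulb-smoothing} applied to the local disk-piece of $R_0$) produces a smooth surface piece intersecting $G$ once transversely, and together with a smoothing of $R_0$ outside a small ball this yields a smooth geometric dual for $G$. Applying Theorem~\ref{theorem:light-bulb-smoothing} with the roles of surface and dual reversed then topologically isotopes $G$ to a globally smooth sphere. With $G$ smooth, every sphere $R$ with $G$ as a geometric dual admits a smoothing $R^{\mathrm{sm}}$ topologically isotopic to $R$ by another application of Theorem~\ref{theorem:light-bulb-smoothing}. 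Define the topological Freedman--Quinn invariant
\[
  \FQ^{\top}(R_0, R) := \FQ(R_0^{\mathrm{sm}}, R^{\mathrm{sm}}) \in \bF_2 T_M / \mu(\pi_3(M)),
\]
using the smooth invariant on the right. Well-definedness amounts to showing that replacing $R^{\mathrm{sm}}$ by a different smoothing $R^{\mathrm{sm}\prime}$ (topologically isotopic to it) does not change $\FQ$; this follows from the fact that $\FQ$ admits a description as a mod-$2$ count of intersections of Whitney disks realizing a regular homotopy, which is manifestly a topological-isotopy invariant on pairs of smooth spheres.

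With $\FQ^\top$ in hand, the bijection is immediate. For injectivity, suppose $\FQ^\top(R_0, R_1) = \FQ^\top(R_0, R_2)$. Then $\FQ(R_0^{\mathrm{sm}}, R_1^{\mathrm{sm}}) = \FQ(R_0^{\mathrm{sm}}, R_2^{\mathrm{sm}})$, so by Schneiderman--Teichner~\cite{Schneiderman-Teichner:2022-1} the smooth spheres $R_1^{\mathrm{sm}}$ and $R_2^{\mathrm{sm}}$ are smoothly, hence topologically, isotopic, and concatenation with the two smoothing isotopies produces a topological isotopy from $R_1$ to $R_2$. Conversely, topologically isotopic spheres have equal $\FQ^\top$ by the well-definedness argument. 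For surjectivity, every element of $\bF_2 T_M / \mu(\pi_3(M))$ is realized by a smooth sphere homotopic to $R_0^{\mathrm{sm}}$ with $G$ as a smooth geometric dual (again by Schneiderman--Teichner), and such a sphere is in particular a topological sphere homotopic to $R_0$ with $G$ as a geometric dual.

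The main obstacle is the combination of smoothing $G$ in the first step and verifying well-definedness of $\FQ^{\top}$. The first has an a priori circular flavor (Theorem~\ref{theorem:light-bulb-smoothing} asks for a smooth dual of $G$, yet the only candidate $R_0$ is a priori only topological), so some care is needed to set up a simultaneous smoothing of $R_0$ and $G$ near the unique transverse intersection point using a local application of Quinn's controlled smoothing before promoting via Theorem~\ref{theorem:light-bulb-smoothing}. The second requires enough of the Whitney-disk construction of $\FQ$ to see that it depends only on the topological-isotopy class of a smooth pair, which is standard but must be stated carefully in order to make the definition of $\FQ^\top$ unambiguous.
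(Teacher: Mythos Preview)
Your overall strategy---reduce to the smooth classification of Schneiderman--Teichner via the Light Bulb Smoothing Theorem---is sound and is indeed one route the paper acknowledges (see the Remark following Theorem~\ref{theorem:fq-top}). However, the paper's primary proof is organized differently: it first develops $\FQ^\top$ \emph{directly} in the topological category using Lees' immersion theorem and Kirby--Siebenmann smoothing in dimensions $\ge 5$ (the appendix), with no reference to surface smoothing or a choice of almost smooth structure. The bijection then follows from a short diagram chase (Corollaries~\ref{corollary:top-equal-smooth-sphere-isotopy-concordance} and~\ref{corollary:sphere-in-top-4-manifold-isotopy-concordance}): pass to $M\setminus p_0$, which is smooth, invoke Schneiderman--Teichner together with Light Bulb Smoothing there, and descend back to $M$ using that spheres and isotopies can be pushed off~$p_0$.

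Two points in your write-up need repair. First, the bootstrapping to smooth $G$ is circular as written: to apply Theorem~\ref{theorem:light-bulb-smoothing} to $G$ you need a \emph{smooth} geometric dual, and your only candidate $R_0$ is still topological outside a small ball; ``a smoothing of $R_0$ outside a small ball'' is exactly what you do not yet have. The clean fix is to avoid the bootstrap entirely: rather than fixing an almost smooth structure in advance, choose one for which $G$ and $R_0$ are already smooth (this is Lemma~\ref{lemma:almost-smoothing-rel-surfaces}, obtained from topological transversality and the existence of plumbed normal bundles). Since the target $\bF_2 T_M/\mu(\pi_3 M)$ is purely topological, nothing is lost.

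Second, your well-definedness argument for $\FQ^\top$ (``manifestly a topological-isotopy invariant'') is not a proof. What is actually needed is that if smooth spheres $R^{\mathrm{sm}}$ and $R^{\mathrm{sm}\prime}$ are \emph{topologically} isotopic, then the smooth Freedman--Quinn invariant between them vanishes. The paper establishes the analogous fact for the Dax invariant via Lemma~\ref{lemma:pi_1-null-approx}: a generic approximation of a topological embedding has all double point loops null-homotopic, so the relevant self-intersection count vanishes (Lemma~\ref{lemma:smooth-dax-top-invariance}). The same mechanism works for the Freedman--Quinn invariant, applied to the honest track $S^2\times I \to M\times\R\times I$ of the topological isotopy; you should state and use this rather than appeal to the Whitney-disk description.
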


For more about the bijection, see Section~\ref{section:application-to-spheres}.

In Section~\ref{section:more-applications}, we discuss more applications related to light bulb theorems, which generalize previously known smooth category results to the topological category.
See Theorems~\ref{theorem:top-light-bulb-theorem} and Corollaries~\ref{corollary:top-mapping-class-S^2xD^2} and~\ref{corollary:schwartz-top}.
In particular, Corollary~\ref{corollary:schwartz-top} shows that the main result of recent work of Schwartz \cite[Theorem~0.1]{Schwartz:2021-1} holds in the topological category.

We finish this introduction with another ``topological = smooth'' result, which concerns concordance of spheres with a common geometric dual.

\begin{corollaryalpha}
  \label{corollary:top-equal-smooth-concordance-isotopy}
  Let $M$ be a smooth 4-manifold and $G$ be a smooth framed sphere in~$M$.
  Then the following natural map is a bijection.
  \begin{multline*}
    \left\{\begin{tabular}{@{}c@{}}
      smooth spheres in $M$ with\\
      $G$ as a geometric dual
    \end{tabular}\right\}
    \Big/
    \begin{tabular}{@{}c@{}}
      smooth
      \\
      concordance
    \end{tabular}
    \\
    \xrightarrow{\;\;\approx\;\;}
    \left\{\begin{tabular}{@{}c@{}}
      topological spheres in $M$ with\\
      $G$ as a geometric dual
    \end{tabular}\right\}
    \Big/
    \begin{tabular}{@{}c@{}}
      topological
      \\
      concordance
    \end{tabular}
  \end{multline*}
  Also, for smooth spheres with a common smooth geometric dual, the following are equivalent: topological concordance, topological isotopy, smooth concordance and smooth isotopy.
\end{corollaryalpha}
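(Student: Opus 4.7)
The proof reduces both assertions of the corollary to a single key implication: \emph{if two smooth spheres $R_0, R_1$ in $M$ sharing the common smooth geometric dual $G$ are topologically concordant, then they are topologically isotopic.} Once this is granted, Corollary~\ref{corollary:top-equal-smooth-sphere-isotopy} upgrades the conclusion to smooth isotopy, and combined with the trivial implications (smooth implies topological in either notion, and isotopy implies concordance in either category), all four equivalence relations collapse, giving both the four equivalences of the second assertion and the injectivity of the natural concordance map. Surjectivity of the map is immediate from Corollary~\ref{corollary:top-equal-smooth-sphere-isotopy}: every topological sphere in $M$ with dual $G$ is topologically isotopic, and hence topologically concordant, to a smooth sphere with $G$ as geometric dual.

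To prove the key implication, observe that a topological concordance is in particular a topological homotopy, so $R_0$ and $R_1$ are topologically homotopic smooth spheres sharing $G$. By Corollary~\ref{corollary:sphere-top-isotopy-classification}, their topological isotopy classes are determined by the topological Freedman-Quinn invariant in $\bF_2 T_M / \mu(\pi_3 M)$, so it suffices to show that this invariant agrees on $R_0$ and $R_1$. Because both spheres are already smooth and share the smooth dual $G$, the Light Bulb Smoothing Theorem~\ref{theorem:light-bulb-smoothing} shows that no stabilization is required in the definition of their topological FQ invariants, and these coincide with the Schneiderman-Teichner smooth FQ invariants of $R_0$ and $R_1$.

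The main obstacle is therefore the concordance invariance of FQ under a merely topological concordance between smooth spheres. The plan is to apply the stable smoothing technique of Theorem~\ref{theorem:stable-smoothing} to the topological concordance annulus $R \subset M \times I$: after stabilizing $M$ by finitely many summands of $S^2 \times S^2$ taken away from $R_0, R_1, G$, a controlled topological finger-move argument parallel to the one outlined in Section~\ref{section:proof-stable-smoothing}, now applied to the codimension-2 annulus in the 5-manifold $M \times I$, is to promote $R$ to a smooth concordance between $R_0$ and $R_1$ in $M \# k(S^2 \times S^2)$ for some $k$. The smooth-category concordance invariance of FQ from Schneiderman-Teichner then yields equality of the smooth FQ invariants of $R_0$ and $R_1$ in the stabilized manifold, and the stability of the FQ invariant under connect-summing with $S^2 \times S^2$ away from the relevant spheres descends this equality to $M$ itself. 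This completes the key implication and hence the proof.
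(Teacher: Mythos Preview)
Your outer logic is sound: the reduction to the key implication (topologically concordant smooth spheres with common dual $G$ are topologically isotopic), the use of Corollary~\ref{corollary:top-equal-smooth-sphere-isotopy} to then upgrade to smooth isotopy, and the surjectivity argument all match the paper's structure. The gap is in how you propose to prove the key implication.

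You correctly identify that everything comes down to the topological-concordance invariance of the Freedman--Quinn invariant. But your proposed method---smoothing the topological concordance annulus $R \subset M \times I$ by ``a controlled topological finger-move argument parallel to the one outlined in Section~\ref{section:proof-stable-smoothing}''---is not justified. Quinn's Theorem~\ref{theorem:quinn-finger-move}, on which all of Section~\ref{section:proof-stable-smoothing} rests, is a genuinely 4-dimensional statement about surfaces in 4-manifolds; there is no finger-move smoothing theory for a locally flat $S^2\times I$ in a 5-manifold that you can invoke, and the argument does not transplant one dimension up. Stabilizing $M$ by $S^2\times S^2$ summands does not help either, since the obstruction lives in the 5-manifold $M\times I$, not in~$M$.

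The paper handles this point differently. Topological concordance invariance of $\FQ^\top$ is built into Theorem~\ref{theorem:fq-top}(1), proved in the appendix by constructing $\FQ^\top$ in the topological category from the outset: Lees' immersion theorem and Kirby--Siebenmann high-dimensional smoothing produce generic immersions of $S^2\times I$ into the 6-manifold $M\times\R\times I$, and since a concordance is already an embedding, $\mu$ vanishes tautologically. Alternatively (as noted in the Remark following Theorem~\ref{theorem:fq-top}), the approximation principle of Lemma~\ref{lemma:pi_1-null-approx}, exactly as in Lemma~\ref{lemma:smooth-dax-top-invariance}, shows that a smooth generic perturbation of a topological embedding $S^2\times I\hookrightarrow M\times\R\times I$ has null-homotopic double point loops, hence $\mu=0$. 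Either route avoids any attempt to smooth the concordance itself. With Theorem~\ref{theorem:fq-top}(1) in hand, the paper's diagram chase in Corollary~\ref{corollary:top-equal-smooth-sphere-isotopy-concordance} finishes the proof directly---and note that you are already implicitly relying on Theorem~\ref{theorem:fq-top} when you invoke Corollary~\ref{corollary:sphere-top-isotopy-classification}, so you may as well use its concordance invariance clause too.
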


The smoothing problem for topological disks and surfaces is of foundational importance in 4-dimensional topology, and we expect the methods developed in this paper to have further applications.
In particular, our techniques and results (along with their further generalizations) appear to be useful for studying structures in the topological mapping class groups of non-simply-connected 4-manifolds.
We will address this elsewhere.

The organization of the body of this paper is as follows.
In Section~\ref{section:proof-light-bulb-smoothing}, we prove the Light Bulb Smoothing Theorem~\ref{theorem:light-bulb-smoothing}\@.
In Section~\ref{section:proof-stable-smoothing}, we prove the Stable Smoothing Theorem~\ref{theorem:stable-smoothing}\@.
In Section~\ref{section:top-dax-invariant}, we develop the topological Dax invariant, and in particular prove Theorem~\ref{theorem:dax-top}\@.
In Section~\ref{section:proof-top=smooth-for-disk}, we prove Corollaries~\ref{corollary:top-equal-smooth-disk-isotopy} and~\ref{corollary:disk-top-isotopy-classification}, which address isotopy classification of disks, using results developed in earlier sections.
In the same section, we also prove Corollary~\ref{corollary:knotted-3-disk}\@.
In Section~\ref{section:application-to-spheres}, we prove Corollaries~\ref{corollary:top-equal-smooth-sphere-isotopy}, \ref{corollary:sphere-top-isotopy-classification} and~\ref{corollary:top-equal-smooth-concordance-isotopy}\@.
In Section~\ref{section:more-applications}, we discuss some applications in the topological category related to light bulb theorems.
In an appendix, we provide technical details of the Freedman-Quinn invariant in the topological category, using Lees' immersion theorem~\cite{Lees:1969-1} and Kirby-Siebenmann's high dimensional smoothing theory~\cite{Kirby-Siebenmann:1977-1}, for concreteness.

\subsubsection*{Acknowledgements}
The authors thank the anonymous referees for their helpful comments.
This work was supported by the National Research Foundation grant RS-2019-NR039996.

\section{Light bulb smoothing: proof of Theorem~\ref{theorem:light-bulb-smoothing}}
\label{section:proof-light-bulb-smoothing}

Let $R$ be a topological surface in a smooth 4-manifold $M$, which has a smooth geometric dual~$G$.
Our goal is to show that $R$ is topologically isotopic to a smooth surface in~$M$, by a topological isotopy supported in a neighborhood of $R\cup G$.

By smooth approximation, $R$ is always homotopic rel $\partial$ to a smooth immersion.
But, without assuming the existence of a geometric dual, $R$ is not topologically isotopic to a smooth embedding in general.
The following deep result of Quinn~\cite{Freedman-Quinn:1990-1,Quinn:1982-1,Quinn:1984-2} shows how far one can proceed toward a smooth embedding in general.

\begin{theorem}[{\cite[8.1A]{Freedman-Quinn:1990-1}}]
  \label{theorem:quinn-finger-move}
  Let $V$ and $M$ be smooth 4-manifolds and $h\colon V \hookrightarrow M$ be a topological embedding that is smooth on the boundary.
  Let $R \subset V$ be a smooth surface.
  Then $R$ can be changed to a smoothly immersed surface $R'\subset V$ by smooth finger moves and smooth isotopy in~$V$, so that there is a topological isotopy $\{h_t\colon V\hookrightarrow M\}_{0\le t \le 1}$ of $h=h_0$ such that $h_t(V)=h(V)$ for all $t$ and $h_1\colon V\hookrightarrow M$ restricts to a smooth embedding on a neighborhood of~$R'$.
\end{theorem}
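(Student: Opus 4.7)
The plan is to combine Quinn's smoothing theory for topological $4$-manifolds, which allows smoothing a topological embedding away from low-dimensional skeleta, with an argument showing that the residual local obstruction near $R$ can be absorbed by smooth finger moves on $R$.

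First, I would apply the controlled handle smoothing of Quinn and Freedman--Quinn to $h$: after a topological ambient isotopy of $h$ supported in $h(V)$ and trivial near $\partial V$, the map $h$ can be arranged to be smooth on the complement of an arbitrarily small tubular neighborhood $\nu R$ of $R$ in $V$. This reduces the problem to a local one on $\nu R$, where the only remaining obstruction to making $h$ smooth is a local topological-versus-smooth discrepancy between the given smooth structure on $V$ and the pullback via $h$ of the smooth structure on $M$, concentrated on $\nu R$.

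Second, I would kill this residual obstruction by smooth finger moves on $R$ in $V$. Each finger move produces an immersed $R'$ with a transverse cancelling pair of self-intersections joined by a small embedded Whitney disk; near each new self-intersection, $h$ restricted to a $4$-ball neighborhood in $V$ can be modified by a controlled topological isotopy in $M$, with $h(V)$ preserved, that flips the local smoothing data by exactly one generator. The core realization statement is that every element of the relevant obstruction group over $\nu R$ is represented by such a local finger-move model, so finitely many finger moves placed at carefully chosen points of $R$ trivialize the global obstruction. A second application of Quinn's smoothing, now over $\nu R'$, then produces a smooth embedding on a neighborhood of~$R'$.

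The hard part will be the realization step: matching the change in the smoothing obstruction produced by a single finger move with exactly one generator of the local obstruction group over $\nu R$, without introducing spurious smoothing anomalies elsewhere. This requires a standard local model in a $4$-ball $D^{4}\subset V$ in which two topological embeddings of a pair of transversely intersecting smooth $2$-disks, differing by a local topological twist, become ambiently isotopic in $M$ (with setwise-fixed image) after exactly one finger move. Patching these local isotopies together with the controlled ambient isotopies produced by Quinn's smoothing then yields the final topological isotopy $\{h_t\}$ with $h_t(V)=h(V)$ and $h_1$ smooth on a neighborhood of~$R'$.
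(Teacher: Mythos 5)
The paper does not give its own proof of this statement; it is quoted verbatim from Freedman--Quinn [8.1A], and even the Addendum (which the paper does prove) is established by tracing the construction in [FQ 8.1, 7.2C, 7.1D] rather than by a self-contained argument. So the right benchmark for your proposal is the actual FQ proof, which is built on the controlled $h$-cobordism theorem [FQ 7.2C], itself an application of Freedman's disk embedding technology.

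Your proposal differs in architecture, and I think the second step has a genuine gap. You set up a ``local obstruction group over $\nu R$'' and plan to show that each finger move flips the obstruction by one generator. But there is no Kirby--Siebenmann-style local smoothing obstruction theory in dimension $4$: $\mathrm{TOP}_4/O_4$ is not a classifying object for $4$-dimensional smoothings, and the high-dimensional statement that smoothings of $V$ are classified by $H^3(V;\Z/2)$ fails in dimension $4$. Without that obstruction group there is nothing for a single finger move to ``cancel one generator'' of, so the realization step you flag as ``the hard part'' cannot even be posed in the form you propose. Also, step 1 as stated is not a black-box consequence of Quinn's smoothing results: Quinn smooths a topological $4$-manifold away from a \emph{point} [FQ 8.2], not away from a given surface; smoothing the embedding $h$ away from an arbitrarily small $\nu R$ is itself a substantial assertion that needs an argument.

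What the FQ proof actually does, roughly, is compare the given smooth structure on $V$ with $h^*$ of the smooth structure on $M$ near $R$; the resulting concordance of smooth structures is turned into a controlled $5$-dimensional $h$-cobordism, and the controlled $h$-cobordism theorem produces a product structure after finger moves on $R$. The finger moves and Whitney disks enter precisely because the controlled Whitney disk argument (Casson handles / Freedman's reimbedding) only yields topologically embedded Whitney disks, so the non-smoothness of $h$ gets pushed onto those topological Whitney disks rather than being killed outright---exactly the phenomenon the paper records in Remark~\ref{remark:difficulty-of-undoing-finger-moves}. Your geometric intuition that the finger-move Whitney disks are the ``dumping ground'' for the non-smooth data is correct and matches this picture, but the mechanism is controlled $h$-cobordism theory plus Freedman's disk embedding theorem, not a local obstruction calculation; the latter has no known (and likely no possible) formulation in dimension $4$. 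If you want to flesh this out, replace the obstruction-group framework with the $h$-cobordism comparison of the two smooth structures and invoke [FQ 7.2C] and [FQ 7.1D] for the controlled Whitney disk construction.
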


In our case, there exists a topological normal bundle $V$ for the given topological surface $R$ in $M$ by~\cite[9.3]{Freedman-Quinn:1990-1}.
It means that $V$ is a neighborhood of $R$, which has a 2-dimensional vector bundle structure over $R$ such that $R\subset V$ is identified with the zero section.
(A topological normal bundle is also required to be extendable~\cite[9.3, p.~137]{Freedman-Quinn:1980-1}, but we do not explicitly use it.)
The topological normal bundle $V$ will always be equipped with the smooth structure induced by the vector bundle structure and the (unique) smooth structure of~$R$.
Note that the inclusion $h\colon V\hookrightarrow M$ is a topological embedding but not necessarily smooth.
When $\partial R$ is nonempty, $\partial V$ is a normal bundle of the 1-manifold $\partial R$ in the 3-manifold $\partial M$, which can always be assumed to be smooth.

Quinn's Theorem~\ref{theorem:quinn-finger-move} applies directly to our inclusion $h$ and $R\subset V$, to give a smoothly immersed surface $R'\subset V$ and a topological isotopy $\{h_t\}$ of $h=h_0$.

\begin{remark}
  \label{remark:difficulty-of-undoing-finger-moves}
  The smooth finger moves in Theorem~\ref{theorem:quinn-finger-move} introduces smooth Whitney disks in~$V$ which pair up double points of~$R'$.
  Applying the isotoped embedding $h_1$, topological finger moves in $M$ changes $h_1(R)$ to $h_1(R')$ and introduces topological Whitney disks in $M$ which pair up double points of~$h_1(R')$.
  Although $h_1(R')$ is smooth in~$M$, the Whitney disks are not smooth in~$M$.
  Furthermore, in general, the Whitney disks are not smoothable in $M$, i.e.,\ not topologically isotopic to a smooth disk.
  If they were smoothable, we could use Whitney moves to undo the topological finger moves smoothly in $M$, so that we could change the smoothly immersed surface $h_1(R')$ to a smoothing of $R$ in~$M$.
  It would contradict the existence of non-smoothable surfaces in smooth 4-manifolds.
\end{remark}

In our case, the existence of a geometric dual $G$ for $R$ is essential in avoiding the issue discussed in Remark~\ref{remark:difficulty-of-undoing-finger-moves}.
We will use $G$ together with the topological Whitney disks to construct a topological isotopy of $R$ in~$M$.
(Another approach will be developed in Section~\ref{section:proof-stable-smoothing}.)

For this purpose, we first need to arrange the isotopy $\{h_t\}$ slightly more carefully.
Specifically, in addition to the properties of $R'$ and $\{h_t\}$ in Quinn's Theorem~\ref{theorem:quinn-finger-move}, we can also assume the following.
Let $\nu(G)\subset M$ be a normal bundle of~$G$.

\begin{claim}
  We may assume that the isotopy $\{h_t\}$ satisfies $h_t = h$ on $\nu(G)\cap V$ for all~$t$.
\end{claim}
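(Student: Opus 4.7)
The plan is to reduce the claim to a relative form of Theorem~\ref{theorem:quinn-finger-move}, in which the finger moves and topological isotopy are required to occur outside any prescribed closed subset of $V$ where $h|_R$ is already smooth. Given this reduction, the task amounts to arranging, by a preliminary topological isotopy, that $h(R)\cap\nu(G)$ is a smooth disk in $M$, i.e., that $R$ itself is smooth in a neighborhood of the single transverse intersection point $p := R\cap G$.

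For the preliminary smoothing, I would shrink $\nu(G)$ so that it lies in a smooth coordinate ball $B\subset M$ centered at $p$, with $\nu(G)\cap V$ a small $4$-ball neighborhood of $p$ in $V$. Since $G$ is smooth and meets the locally flat $R$ transversely at $p$, the smooth tubular structure $\nu(G)\to G$ provides a smooth transversal direction at $p$; combining this with a topological normal bundle for $R$ yields local product coordinates at $p$ modelled on $(R\cap B,\,G\cap B)\cong(\R^2\times\{0\},\,\{0\}\times\R^2)$ in $\R^4$. A local topological isotopy supported in $\nu(G)$ then straightens $R\cap\nu(G)$ to a smooth disk, by the local smoothing techniques of Freedman and Quinn. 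Absorbing this preliminary adjustment into $h$, we may assume $h(R)\cap\nu(G)$ is smooth.

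With this in place, I would apply the relative form of Quinn's theorem. Examining the proof of Theorem~\ref{theorem:quinn-finger-move} in \cite{Freedman-Quinn:1990-1}, the smooth finger moves in $V$ and the accompanying topological isotopy $\{h_t\}$ can be localized to the region where $h|_R$ fails to be smooth; since $h|_R$ is now smooth on a neighborhood of $R\cap\nu(G)$, both can be arranged to avoid the open set $\nu(G)\cap V$, which yields $h_t = h$ on $\nu(G)\cap V$ for all $t$.

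The main obstacle I expect is the preliminary local smoothing at $p$: in dimension $4$, a locally flat disk in a smooth ball is not in general topologically isotopic to a smooth disk, and what rescues us here is precisely the smooth transversal $G$, which provides the additional smooth structure needed to carry out the local straightening in a controlled way.
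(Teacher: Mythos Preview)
Your overall strategy---arrange smoothness near the intersection point and then apply Quinn's theorem relatively---is the right one, but there are two gaps.

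First, the preliminary local smoothing you worry about is a non-issue. By the very definition of topological transversality used here (see the second paragraph of \cite[8.3]{Freedman-Quinn:1990-1}), the fact that $R$ and the smooth $G$ are transverse means precisely that $R\cap\nu(G)$ is a fiber $z_0\times\R^2$ of the smooth bundle $\nu(G)=G\times\R^2$. So $R\cap\nu(G)$ is already a smooth disk in~$M$; no local straightening or appeal to ``local smoothing techniques'' is needed, and your concern about locally flat disks in smooth balls does not arise.

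Second, and more substantively, you only arrange that $h(R)\cap\nu(G)$ is smooth, but what is actually required is that the embedding $h\colon V\hookrightarrow M$ itself be smooth on $V\cap\nu(G)$. This is what allows you to apply Quinn's theorem to the sub-$4$-manifold $W=V\setminus\nu(G)$: the hypothesis of Theorem~\ref{theorem:quinn-finger-move} demands that $h|_W$ be smooth on $\partial W$, and the new boundary created by removing $\nu(G)$ lies in $V\cap\nu(G)$. Smoothness of $h|_R$ there does not by itself guarantee smoothness of~$h$. The paper handles this by \emph{choosing} the normal bundle $V$ carefully: the smooth bundle $\nu(G)$ restricted near $z_0$ already furnishes a partial normal bundle of $R$ on $R\cap\nu(G)$, and by the extension result \cite[9.3]{Freedman-Quinn:1990-1} this extends to a full topological normal bundle $V$ with $V\cap\nu(G)=U\times(R\cap\nu(G))$. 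With this choice, $h$ is tautologically smooth on $V\cap\nu(G)$, and Quinn's theorem applied to $(W,R\cap W)$ yields an isotopy rel $\partial W$ that one extends by the identity on $V\cap\nu(G)$. Your proposal never addresses the choice of $V$, and the vague appeal to ``localizing'' the proof of Quinn's theorem does not substitute for it.
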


Briefly, the claim holds because $R\subset M$ is already smooth near $R\cap G$ so that $R$ does not have to be modified or isotoped near $R\cap G$.
A proof can be given by applying Theorem~\ref{theorem:quinn-finger-move} in a modified way.
For concreteness, we provide technical details below.

\begin{proof}[Proof of Claim]
  Let $\{z_0\} = R\cap G$.
  Write $\nu(G)=G\times \R^2$.
  Recall that the transversality of $R$ and $G$ means that $R\cap \nu(G) = z_0\times \R^2$, the fiber over~$z_0$ (see the second paragraph of~\cite[8.3]{Freedman-Quinn:1990-1}).
  Fix an open disk neighborhood $U$ of $z_0$ in~$G$.
  The restriction of $\nu(G)$ on $U$ is equal to $U\times \R^2 \to U$.
  The other projection $U \times \R^2 = U \times (R\cap \nu(G)) \to R\cap \nu(G)$ is a partial normal bundle of $R$ on $R\cap \nu(G)$ with fiber~$U$.
  By~\cite[9.3]{Freedman-Quinn:1990-1}, if a partial normal bundle on a closed subset $C$ of $R$ extends to a neighborhood of $C$, then the given bundle on $C$ extends to a topological normal bundle on~$R$.
  In our case, we may assume that the partial normal bundle on $R\cap \nu(G)$ extends to a neighborhood of $\overline{R\cap \nu(G)}$ by shrinking $\nu(G)=G\times \R^2$ fiberwise slightly.
  So, the partial normal bundle on $R\cap \nu(G)$ extends to a topological normal bundle $V$ of $R$ in $M$ such that $V\cap \nu(G) = U \times (R\cap \nu(G))$.
  By the construction, the inclusion $h\colon V \hookrightarrow M$ is smooth on $V\cap \nu(G)$.
  Let $W = V\setminus \nu(G)$, $g=h|_{W}\colon W \hookrightarrow M$ and $Q = R\cap W = R\setminus \nu(G)$.
  Apply Quinn's Theorem~\ref{theorem:quinn-finger-move} to $(W, Q)$, to obtain a smoothly immersed surface $Q'\subset W$ by finger moves on $Q$ performed in $W$ and a topological isotopy $\{g_t\}$ of $g=g_0$ rel $\partial W=U\times \partial (\overline{R\cap \nu(G)})$ such that $g_t(W)=W$ and $g_1$ is smooth near~$Q'$.
  Now $R'=Q'\cup (R\cap \nu(G))$ and $\{h_t=g_t \cup \id_{V\cap \nu(G)}\}$ satisfy the conclusion of Theorem~\ref{theorem:quinn-finger-move} and the claim.
\end{proof}

Now, we construct an isotopy of $R$ using~$G$.
For this purpose, the following version of a smooth 4-dimensional light bulb trick due to Gabai~\cite{Gabai:2020-1} plays a key role.

\begin{lemma}[{\cite[Lemma~5.1]{Gabai:2020-1}}]
  \label{lemma:gabai-light-bulb-trick}
  In this lemma, we work in the smooth category.
  Let $R$ be a surface in a 4-manifold $N$, which has a geometric dual~$G$.
  Let $R'$ be an immersed surface in $N$ obtained from $R$ by finger moves.
  Then $R$ is isotopic, in~$N$, to a smooth surface $R''$ which lies in an arbitrarily given neighborhood of $R'\cup G$.
\end{lemma}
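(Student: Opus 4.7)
The plan is: start from the canonical embedded Whitney disks for the double-point pairs of $R'$ coming from the finger moves, use $G$ as a light bulb to tube off their interior intersections with $R'$ (and with one another), perform Whitney moves to obtain an embedded surface $R''$ sitting in the prescribed neighborhood of $R'\cup G$, and finally argue that the tubing has changed $R$ only by ambient connect-sums with parallel copies of $G$, each of which is isotopically trivial because $G$ is a geometric dual.

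Each finger move taking $R$ to $R'$ comes with a canonical embedded Whitney disk $W_i$ obtained by thickening the finger-move arc. After a small generic perturbation, the $W_i$ are mutually disjoint, smoothly embedded, correctly Whitney-framed, lie inside any prescribed neighborhood of $R'$, and meet $R'$ transversely in finitely many interior points. Performing the Whitney moves along the $W_i$ would recover $R$, but the interior intersections $W_i\cap R'$ obstruct the moves from being ambient isotopies of the immersed surface.

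To remove these intersections, let $\{q_0\}=R'\cap G$. For each point $p\in W_i\cap R'$, pick an embedded arc in $R'$ from $p$ to $q_0$, push it slightly off into $N$, and tube $W_i$ to a parallel pushoff $G_p$ of $G$ along this arc. Since $G$ is a sphere, tubing to $G_p$ keeps $W_i$ a smooth disk; since $G$ is framed, the Whitney framing is preserved up to adjustments by boundary twists. The tubing removes the intersection point $p$ at the cost of new intersections between the modified $W_i$ and parallel copies of $G$, but these parallel copies can be placed arbitrarily close to $G$, so the modified disks still lie inside the given neighborhood of $R'\cup G$. Iterating eliminates all interior intersections of the $W_i$ with $R'$, and the same tubing trick handles any remaining mutual and self-intersections among the $W_i$.

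Now perform the Whitney moves along the resulting embedded, correctly framed, mutually disjoint Whitney disks. This produces a smoothly embedded surface $R''$ lying in the prescribed neighborhood of $R'\cup G$. A comparison argument identifies $R''$ with $R$ after ambient connect-sum with some number of parallel copies of $G$: each tube to a copy of $G$ that was inserted into a Whitney disk, when the corresponding Whitney move is executed, drags a parallel $G$-sphere into the resulting surface. Since $G$ is a framed sphere meeting $R$ transversely in exactly one point, the standard smooth light-bulb trick isotopes $R\# G$ to $R$; iterating shows $R''$ is isotopic to $R$. I expect the main technical obstacle to be the careful bookkeeping of framings during the tubing and the verification that both the tubing isotopies and the final light-bulb isotopies can be arranged inside the given neighborhood of $R'\cup G$.
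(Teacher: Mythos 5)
Your approach is a genuinely different route from Gabai's, but it has a gap that is fatal rather than merely technical, and it is worth identifying precisely because the gap is exactly what makes the lemma nontrivial.

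The canonical Whitney disks $W_i$ coming from the finger moves do \emph{not} lie in an arbitrary neighborhood of $R'\cup G$. Their interiors run through the inside of the fingers, away from $R'$; a thin (regular) neighborhood of $R'\cup G$ deformation retracts to $R'\cup G$ and simply does not contain them, and no ``small generic perturbation'' moves them there. This is not an incidental point: it is precisely the phenomenon recorded in Remark~\ref{remark:difficulty-of-undoing-finger-moves}, where the paper explains that although $R'$ lands in the good (smoothable) neighborhood, the Whitney disks do not, so one cannot simply Whitney-move back. Tubing the $W_i$ to parallel push-offs of $G$ leaves them just as far from the prescribed neighborhood as before, so the surface you obtain after the Whitney moves is not forced to lie in the given neighborhood of $R'\cup G$ --- which is the entire content of the lemma. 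Two further problems: the canonical finger-move Whitney disks already have interiors \emph{disjoint} from $R'$ (that is what makes a finger move undoable), so the tubing step you set up has nothing to tube off; and the closing claim that ``the standard smooth light-bulb trick isotopes $R\#G$ to $R$'' is false, since $[R\#G]=[R]+[G]\neq[R]$ in $H_2(N)$ because $[G]\cdot[R]=1$. Even a corrected version would have to use that tubes come in cancelling pairs, which is a more delicate statement than a one-line light-bulb trick.

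Gabai's argument (sketched in the paper in Remark~\ref{remark:proof-by-picture}) inverts your plan: rather than modifying the Whitney disks, it modifies the immersed surface $R'$ itself by Norman moves, tubing one sheet at each double point to a parallel copy of $G$ along an arc in $R'$. Since the Norman move is supported in an arbitrarily thin neighborhood of $R'\cup G$, the resulting embedded surface $R''$ lands there by construction, with no need to control any Whitney disk. Only afterwards does one show $R''$ is smoothly isotopic to $R$, by an explicit isotopy that expands the Whitney-disk region to a disk $\Delta$ with $\Delta\cap R''$ a boundary semicircle, pushes $R''$ across $\Delta$, and then isotopes across parallels of $G$; this isotopy is allowed to leave the small neighborhood, and the two Norman moves per finger move contribute $+[G]$ and $-[G]$, which is why the homology class is preserved. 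In short: the neighborhood constraint is achieved by where $R''$ is \emph{built}, not by constraining the Whitney disks used to compare it with $R$.
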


We remark that the surface $R''$ and the isotopy between $R$ and $R''$ in Lemma~\ref{lemma:gabai-light-bulb-trick} can be explicitly described, following~\cite[Proof of Lemma~5.1]{Gabai:2020-1}.
See Remark~\ref{remark:proof-by-picture} below. 

Returning to the proof of Theorem~\ref{theorem:light-bulb-smoothing}, let $N$ be the union of the topological normal bundle $V$ of $R$ and the normal bundle $\nu(G)$ of $G$, plumbed along $V\cap \nu(G) = U\times \R^2$.
The smooth structures on $V$ and $\nu(G)$ induce a smooth structure on~$N$.
(Note that the inclusion $f\colon N \hookrightarrow M$ is not necessarily smooth.)
Quinn's Theorem~\ref{theorem:quinn-finger-move} ensures that $R'\subset V$ has a neighborhood $U_1$ on which $h_1\colon V\hookrightarrow M$ restricts to a smooth embedding.
Apply Gabai's Lemma~\ref{lemma:gabai-light-bulb-trick} to $R$ and $R'$ in $N$, to obtain a smooth surface $R''$ in 
$U_1\cup \nu(G)$, which is smoothly isotopic to $R$ in~$N$.

The surfaces $R=f(R)$ and $R''=f(R'')$ are topologically isotopic in~$M$, since they are smoothly isotopic in~$N$ and the inclusion $f\colon N\hookrightarrow M$ is a topological embedding.
Let $f_t = h_t \cup \id_{\nu(G)}\colon N = V\cup \nu(G) \hookrightarrow M$, where $\{h_t\colon V\hookrightarrow N\}$ is the isotopy given above by Quinn's Theorem~\ref{theorem:quinn-finger-move}.
By the above claim, $f_t$ is well defined, and $\{f_t\}$ is a topological isotopy between $f=f_0$ and~$f_1$.
So the surfaces $R''=f(R'')$ and $f_1(R'')$ are topologically isotopic in~$M$.
Concatenating those two isotopies yields a topological isotopy between $R$ and $f_1(R'')$ in~$M$.

Since $h_1$ is a smooth embedding on $U_1$ and $\nu(G)$ is a smooth submanifold in~$M$, $f_1=h_1\cup \id_{\nu(G)}$ is smooth on the neighborhood $U_1\cup \nu(G)$ of~$R''$.
So $f_1(R'')$ is a smooth surface in~$M$.

Note that the topological isotopy from $R$ to $f_1(R'')$ is supported in the open set $V\cup \nu(G)$ in~$M$.
Shrinking the (topological) normal bundles $V$ and $\nu(G)$, we may assume that $V\cup \nu(G)$ is contained an arbitrarily given neighborhood of $R\cup G$.

This completes the proof of Theorem~\ref{theorem:light-bulb-smoothing}\@.

\begin{remark}
  \label{remark:proof-by-picture}
  The isotopy which gives the smoothing of the given topological surface $R$ can be depicted as follows.
  This may be viewed as a ``proof by picture.''

  \begin{figure}[t]
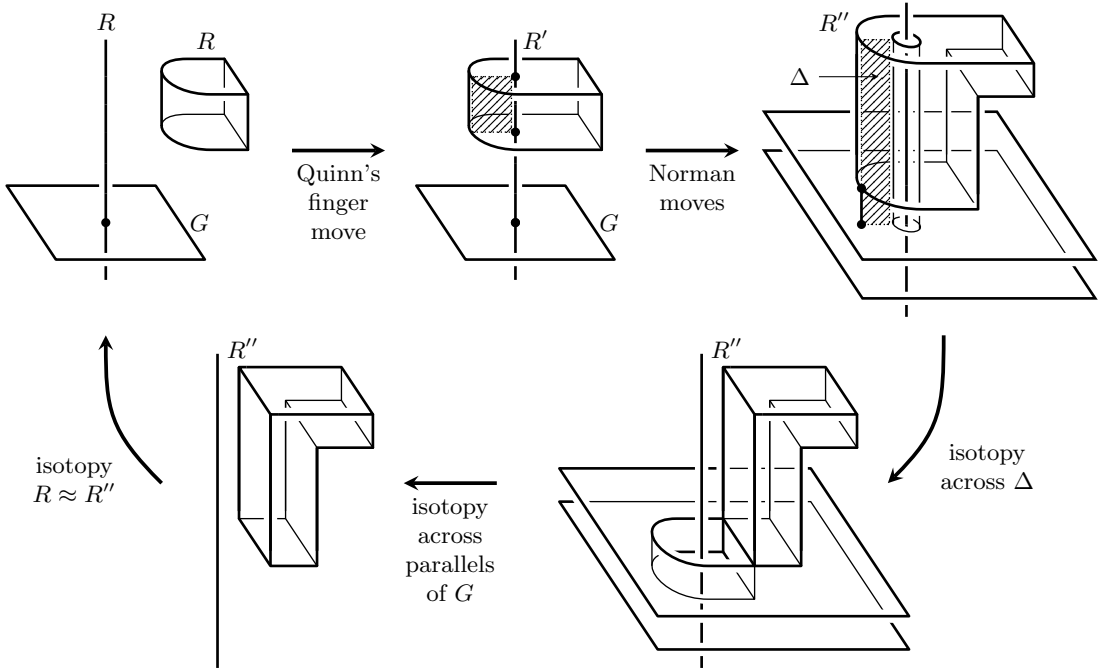

    \includestandalone{figure-light-bulb-isotopy}
    \caption{Isotopy smoothing a given topological surface~$R$.}
    \label{figure:light-bulb-isotopy-lemma}
  \end{figure}

  Figure~\ref{figure:light-bulb-isotopy-lemma} shows the case of a single finger move.
  In our case, the finger move is away from $G$, by the proof of the above claim.
  (Indeed we can always assume this since a finger move is performed along an arc which can be isotoped off~$G$.)
  Start from the top left diagram, which depicts $R$ and $G$ in $N=V\cup \nu(G)$.
  The vertical line in the diagram belongs to $R$ and extends to the past and the future.
  The next diagram shows the immersed surface $R'$ obtained by Quinn's finger move.
  The hatched rectangle is a Whitney disk introduced by the finger move.
  Now, following Gabai, apply Norman moves twice using the geometric dual~$G$, to eliminate the two double points of~$R'$.
  The third diagram shows the result, which is the surface $R''$ in Gabai's Lemma~\ref{lemma:gabai-light-bulb-trick}.
  Note that $R''$ lies in an arbitrary neighborhood of $R'\cup G$ in Figure~\ref{figure:light-bulb-isotopy-lemma}.
  So, $f_1(R'')$ is smooth in~$M$, since the embedding $f_1\colon N\hookrightarrow M$ is smooth near $R'\cup G$.

  The topological isotopy between $f_1(R)$ and~$f_1(R'')$ in~$M$ is the composition of a smooth isotopy between $R$ and $R''$ in $V$ with~$f_1$.
  The smooth isotopy is illustrated in Figure~\ref{figure:light-bulb-isotopy-lemma}.
  (See~\cite[Proof of Lemma~5.1]{Gabai:2020-1} for a description in words.)
  Start from the surface~$R''$ in the third diagram in Figure~\ref{figure:light-bulb-isotopy-lemma}.
  Note that the Whitney disk in the second diagram expands to a disk~$\Delta$ in this diagram such that $\Delta \cap R''$ is a semicircle of~$\partial\Delta$ (shown as a dotted arc).
  Apply an isotopy supported in a regular neighborhood of $\Delta$, which pushes the semicircle $\Delta \cap R''$ (rel $\partial$) to the other semicircle across~$\Delta$, to obtain the fourth diagram.
  A subsequent isotopy across (parallels of) $G$ gives the next diagram, which is readily isotopic to~$R$.

  When multiple finger moves are used to construct $R'$ from $R$, apply Figure~\ref{figure:light-bulb-isotopy-lemma} repeatedly using disjoint parallel copies of~$G$.
\end{remark}

\begin{remark}
  \label{remark:isotopy-non-smooth}
  Figure~\ref{figure:light-bulb-isotopy-lemma} also illustrates that the above isotopy from the third diagram to the fourth is non-smooth \emph{in $M$} in general.
  The disk~$\Delta$ in the third diagram, which contains the Whitney disk in the second diagram, is non-smooth in $M$ as discussed in Remark~\ref{remark:difficulty-of-undoing-finger-moves}.
  The isotopy proceeds along this non-smooth disk.
\end{remark}

\begin{remark}
  \label{remark:light-bulb-smoothing-restricted-support}
  Theorem~\ref{theorem:light-bulb-smoothing} says that the isotopy between the given topological surface $R$ and its smoothing is supported in an arbitrary neighborhood of $R\cup G$.
  By applying Theorem~\ref{theorem:light-bulb-smoothing} to $R\setminus \nu(G)$ in the exterior $M\setminus \nu(G)$ of~$G$, using a parallel $G'\subset M\setminus \nu(G)$ of $G$ as a geometric dual, we may assume that the isotopy that smoothens $R$ is supported in an arbitrary neighborhood of $R\cup G'$ and away from a neighborhood of~$G$.
  Consequently, the resulting smoothing of $R$ has $G$ as a geometric dual.
\end{remark}

\subsubsection*{Geometric dual in a 3-manifold is algebraic}
Before we finish this section, we observe the following fact mentioned in the introduction:

\begin{lemma}
  \label{lemma:dual-in-3-manifold}
  Suppose that $K$ is a locally flat embedded circle in a 3-manifold~$Y$.
  Then, $K$ has a geometric dual in $Y$ if and only if a meridian of $K$ is trivial in $\pi_1(Y \setminus K)$.
\end{lemma}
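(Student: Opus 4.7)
The plan is to handle the two implications separately. The forward direction is essentially tautological once a sufficiently thin tubular neighborhood is chosen, while the backward direction reduces immediately to Dehn's lemma.

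For the forward direction, I would start with a geometric dual $G \cong S^2$ meeting $K$ transversely at a single point $p$. Since $G \cap K = \{p\}$ and the intersection is transverse, the tubular neighborhood $\nu(K) \subset Y$ can be shrunk so that $G \cap \nu(K)$ is a single small transverse disk through $p$ (use that $G \setminus \{p\}$ is a closed subset of $Y \setminus K$, together with transversality at $p$). Then $G \setminus \operatorname{int}\nu(K)$ is a locally flat disk in $Y \setminus K$ whose boundary is a meridian of $K$ on $\partial\nu(K)$, and this disk witnesses the null-homotopy of the meridian in $Y \setminus K$.

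For the backward direction, let $\mu \subset \partial\nu(K)$ be a meridian of $K$, assumed null-homotopic in $Y \setminus K$, and set $N = Y \setminus \operatorname{int}\nu(K)$. Since $N$ is a deformation retract of $Y \setminus K$, the loop $\mu$ is null-homotopic in $N$. Dehn's lemma, in the topological/locally flat category for $3$-manifolds (which follows from the classical PL version via the Bing--Moise theorem that every topological $3$-manifold admits an essentially unique PL structure), then produces an embedded locally flat disk $D \subset N$ with $\partial D = \mu$ and $D \cap \partial N = \partial D$. Capping off $D$ with the standard meridian disk of $\nu(K)$ bounded by $\mu$ yields an embedded $2$-sphere $G \subset Y$ meeting $K$ transversely at exactly one point. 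A framing of $G$ exists automatically, since the normal line bundle of $S^2$ in the oriented $3$-manifold $Y$ is trivial ($S^2$ being simply connected); this produces the desired geometric dual.

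The only substantial ingredient is Dehn's lemma, which is precisely the classical tool for converting algebraic triviality of a boundary loop into an embedded disk. Everything else---the tubular-neighborhood shrinking, the gluing of the two disks, and the existence of a framing---is routine, so I do not expect any genuine obstacle beyond invoking this standard piece of $3$-manifold topology.
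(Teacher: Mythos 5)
Your proof is correct and follows the same overall strategy as the paper: upgrade the null-homotopy of the meridian to an embedded disk in the knot exterior via Papakyriakopoulos-type machinery, then cap off with a fiber disk of $\nu(K)$. The one genuine difference is the choice of key lemma. You invoke Dehn's lemma, which directly returns an embedded disk with the \emph{same} boundary curve $\mu$; the paper instead uses the loop theorem in the form of Hempel's Theorem~4.10, which a priori only returns an embedding $g$ with $g|_{\partial D^2}$ essential in $\partial E$, and then recovers $g(\partial D^2)=\mu$ from the additional control $g(D^2)\subset f(D^2)\cup U$ with $U$ chosen disjoint from $\partial E$. Both routes work; Dehn's lemma is slightly more direct, but note that its hypothesis — that the singular set of the null-homotopy $f$ is disjoint from a collar of $\partial D^2$ — requires exactly the general-position and collar arrangements that the paper spells out and that you leave implicit. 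Your remarks on Bing--Moise for passing between the locally flat and PL settings and on the automatic framing of $S^2$ in an oriented $3$-manifold are both correct and in the same spirit as the paper.
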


It is a straightforward consequence of the loop theorem. 
For the reader's convenience, we give a proof below.

\begin{proof}
  The only if direction is obvious.
  To verify the if direction, let $E$ be the exterior of $K$ in $Y$, and $f\colon D^2\to E$ be a null-homotopy with $f(S^1)$ a meridian of $K$ lying in~$\partial E$.
  We may assume that there is an open collar $C\cong \partial E\times [0,1)$ of $\partial E$ in $E$ such that $f^{-1}(C)$ is a collar of $\partial D^2$, $f$ is 1--1 on $f^{-1}(C)$, $f(f^{-1}(C))\cap f(D^2\setminus f^{-1}(C))=\emptyset$ and $f$ is in general position.
  Let $\Sigma(f)$ be the image of the closure of $\{x\in D^2\mid \# f^{-1}(f(x)) > 1\}$ under~$f$.
  We have $\Sigma(f)\subset f(D^2)\setminus C \subset E\setminus C$.
  Fix a neighborhood $U$ of $\Sigma(f)$ in $E$ which is disjoint from~$\partial E$.
  By the loop theorem stated in~\cite[Theorem~4.10]{Hempel:1976-1}, there is an embedding $g\colon (D^2,S^1) \hookrightarrow (E, \partial E)$ such that $g(D^2) \subset f(D^2)\cup U$.
  Since $g(S^1) \subset \partial E \cap (f(D^2)\cup U) = f(S^1)$ and $g$ is an embedding, $g(S^1)=f(S^1)$.
  That is, $g(D^2)$ is an embedded disk in $E$ bounded by a meridian $g(S^1)$ of~$K$.
  The union of $g(D^2)$ and a fiber disk in the tubular neighborhood of $K$ is a desired geometric dual for~$K$.
\end{proof}

Recall that in Corollary~\ref{corollary:top-equal-smooth-disk-isotopy}, we assume that the boundary $K$ of the concerned disk $D$ in a 4-manifold $M$ has a meridian which is null-homotopic in $\partial M\setminus K$.
By Lemma~\ref{lemma:dual-in-3-manifold}, this assumption is equivalent to that the disk $D$ has a geometric dual lying in~$\partial M$.

It is also readily seen that Theorem~\ref{theorem:light-bulb-smoothing} and its alternative form in the introduction are equivalent.
If $R$ is as in the alternative form, then $R$ has a geometric dual in $\partial M$ by Lemma~\ref{lemma:dual-in-3-manifold}, and thus the conclusion of the alternative form holds by Theorem~\ref{theorem:light-bulb-smoothing}\@.
Conversely, if $R$ is a topological surface in a smooth 4-manifold $M$ with a smooth geometric dual $G$, then in the exterior $W$ of $G$ in $M$, the alternative form of Theorem~\ref{theorem:light-bulb-smoothing} applies to the surface $R\cap W$ to conclude that the surface $R\cap W$ is smoothable by a topological isotopy in~$W$.  Consequently, $R$ is smoothable in~$M$.

\section{Smoothing in stabilizations: proof of Theorem~\ref{theorem:stable-smoothing}}
\label{section:proof-stable-smoothing}

Let $R$ be a compact 2-manifold topologically embedded in a smooth 4-manifold~$M$.
The manifold $R$~is allowed to have multi-components.
The goal is to show Theorem~\ref{theorem:stable-smoothing}, which asserts that $R$ is topologically isotopic to a smooth embedding in $M\#k(S^{2}\times S^{2})$ for some~$k$.

As in the proof of the Light Bulb Smoothing Theorem~\ref{theorem:light-bulb-smoothing}, fix a topological normal bundle $V\subset M$ of $R$ and apply Quinn's Theorem~\ref{theorem:quinn-finger-move} to each component of~$R$.
It changes $R$ to a smoothly immersed 2-manifold $R'\subset V$ by finger moves in~$V$ and gives a topological isotopy of the inclusion $h_0\colon V\hookrightarrow M$ to another topological embedding $h_1\colon V\hookrightarrow M$ which is smooth near~$R'$.
Let $D_i\subset V$ be the Whitney disks for $R'$ introduced by the smooth finger moves.
(These are the hatched rectangle in the second diagram in Figure~\ref{figure:light-bulb-isotopy-lemma}.)
In $M$, $R=h_0(R)$ is topologically isotopic to $h_1(R)$, topological finger moves change $h_1(R)$ to a smoothly immersed 2-manifold $R^+ := h_1(R')$, and $W_i := h_1(D_i) \subset M$ are topological Whitney disks for $R^+$ introduced by the topological finger moves.

The remaining part of the proof will proceed as follows.
In Remark~\ref{remark:difficulty-of-undoing-finger-moves}, we observed that the disks $W_i$ are not smoothable in $M$ in general.
In a stabilization $M \# k(S^2\times S^2)$, however, we will show that the disks $W_i$ are topologically isotopic to smooth disks~$W_i'$, by using the Light Bulb Smoothing Theorem~\ref{theorem:light-bulb-smoothing}\@.
Then, a desired smoothing of $R$ in $M \# k(S^2\times S^2)$ can be obtained from $R^+$ by Whitney moves using~$W_i'$, which undo the topological finger moves smoothly.
To do this, we need a geometric dual for~$W_i$.
It turns out that the existence of a geometric dual for $W_i$ is closely related to the double point loops associated with the finger moves.
In Section~\ref{subsection:double-point-loop-approx-embedding}, we investigate homotopy classes of double point loops.
In Section~\ref{subsection:dual-for-Whitney-disks}, we construct geometric dual spheres for the Whitney disks $W_i$ using the results of Section~\ref{subsection:double-point-loop-approx-embedding}.

\begin{remark}
  In a stabilization of $M$, a well-known application of the Norman trick changes a smooth immersion of a surface to a smooth embedding:
  tube the immersed surface to the sphere $S^2\times *$ in a $S^2\times S^2$ summand, so that $*\times S^2$ becomes a geometric dual, and then remove double points by tubing to parallels of the geometric dual (e.g.\ see~\cite[1.9]{Freedman-Quinn:1990-1}).
  In our case, one could apply this to the immersed surface $R^+$ or a smooth approximation of $W_i$, but it does not give an isotopy of the initial surface~$R$.
\end{remark}

\subsection{Double point loops of an approximation of an embedding}
\label{subsection:double-point-loop-approx-embedding}

We begin by recalling the definition of a double point loop.
For an arbitrary map $g\colon K\to V$, a \emph{double point} means a point $p\in g(K)$ such that $p=g(x)=g(y)$ for some $x$, $y\in K$, $x\ne y$.
If $\gamma$ is a path from $x$ to $y$ in $K$, the loop $g(\gamma)$ on $g(K)$ is called a \emph{double point loop} for~$p$.
In this paper, $K$ and $V$ will always be manifolds, but the map $g$ does not have to be generic to define a double point.

The following simple principle turns out to be very useful for our purpose:
``an approximation close to an embedding has null-homotopic double point loops.''
This means that if a map $g$ approximates an embedding $f$ sufficiently closely, then double points of $g$ have double point loops which are sufficiently short, and thus null-homotopic in a manifold.
The lemma below is a formal statement that we will use.
It is slightly more general than what we need in this section, for later use in this paper.
For brevity, for two maps $f$, $g\colon K\to V$ to a metric space $(V,d)$, we say that $g$ is \emph{$\epsilon$-close} to $f$ if $d(g(x),f(x))<\epsilon$ for all $x\in K$.

\begin{lemma}
  \label{lemma:pi_1-null-approx}
  Let $K$ be a compact connected smooth manifold and $V$ be a topological manifold equipped with a metric~$d$.
  The manifolds $K$ and $V$ are allowed to have nonempty boundary.
  Let $f\colon K\hookrightarrow V$ be a topological embedding.
  Then there exists $\epsilon>0$ such that if a map $g\colon K\to V$ is $\epsilon$-closed to $f$, every double point of $g$ has a double point loop which is null-homotopic in~$V$.
\end{lemma}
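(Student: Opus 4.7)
The plan is to chain together three uniform estimates, each obtained by a standard compactness argument: (i) $d$-closeness in $V$ near $f(K)$ forces $d_K$-closeness in $K$ for an auxiliary metric $d_K$ on $K$; (ii) sufficiently $d_K$-close points in $K$ are joined by a path of small $K$-diameter whose $f$-image has small $V$-diameter; and (iii) small $d$-balls around points of $f(K)$ are contained in contractible open subsets of~$V$. Once these three estimates are set up, a straightforward chain will show that the double point loop $g(\gamma)$ associated to any short chord $\gamma\subset K$ between double preimages is trapped in a single coordinate chart of $V$, hence null-homotopic.

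I would obtain the three ingredients as follows. For (i), fix any Riemannian metric $d_K$ on $K$; since $f\colon K\to f(K)$ is a homeomorphism between compact metric spaces, $f^{-1}$ is uniformly continuous, so for each $\delta>0$ there exists $\eta=\eta(\delta)>0$ with $d(f(x),f(y))<\eta\Rightarrow d_K(x,y)<\delta$. For (ii), taking $\delta$ smaller than the injectivity radius of $d_K$ (regarding $K$ as a compact submanifold with boundary of a slightly larger smooth open Riemannian manifold, to accommodate $\partial K$), any two points of $K$ at $d_K$-distance $<\delta$ are joined by a geodesic $\gamma\subset K$ of $K$-diameter $\le\delta$; uniform continuity of $f$ on $K$ then lets me shrink $\delta$ further so that $f(\gamma)$ has $V$-diameter $<r/3$, where $r$ is the constant produced in (iii). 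For (iii), local Euclideanness of $V$ yields a cover of $V$ by coordinate charts homeomorphic to $\R^n$ or $\R^n_+$, each contractible; applying the Lebesgue number lemma to the restriction of this cover to the compact set $f(K)$ gives $r>0$ such that every ball $B(v,r)$ with $v\in f(K)$ is contained in one such chart.

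With $\epsilon:=\min\{\eta/2,\,r/3\}$ I would then verify the conclusion. If $g$ is $\epsilon$-close to $f$ and $p=g(x)=g(y)$ is a double point, the triangle inequality gives $d(f(x),f(y))<2\epsilon\le\eta$, so (i) yields $d_K(x,y)<\delta$. Choosing $\gamma\subset K$ as in (ii) and combining the two diameter bounds, $g(\gamma)\subset B(p,r)$, which by (iii) lies in a contractible chart of~$V$; hence the double point loop $g(\gamma)$ is null-homotopic in~$V$. The only real friction I anticipate is the handling of $\partial K$ in step (ii): rather than invoking the injectivity radius directly for a manifold with boundary, I would replace short geodesic chords by short radial paths in a finite atlas of coordinate balls covering~$K$, which is equally uniform by compactness and bypasses the boundary subtlety entirely; the analogous half-space issue for $V$ is painless since $\R^n_+$ is itself contractible.
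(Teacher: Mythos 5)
Your proposal is correct and follows essentially the same approach as the paper's proof: choose a Riemannian metric on $K$, use compactness and uniform continuity of $f$ and $f^{-1}$ to get the required uniform bounds, join nearby double-point preimages by a short path in $K$, and trap the resulting double point loop in a single ball contained in a contractible chart. The minor difference is cosmetic (your $r/3$ versus the paper's $r/2$, and your remark about radial paths in a finite atlas is a clean alternative to the paper's direct use of short geodesics for the boundary case).
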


\begin{remark}
  In addition, if $K$ is simply connected and $g$ is an immersion, then it follows that \emph{the image $g(K)$ is $\pi_1$-null in $M$}, i.e.,\ $\pi_1(g(K)) \to \pi_1(M)$ is trivial for any choice of a basepoint.
\end{remark}

The proof of Lemma~\ref{lemma:pi_1-null-approx} is elementary.
For the reader's convenience, we spell out details below.

\begin{proof}[Proof of Lemma~\ref{lemma:pi_1-null-approx}]
  Fix a metric $\rho\colon K\times K \to [0,\infty)$ induced by a Riemannian metric on~$K$.
  In $K$ and $V$, denote the metric ball with center $x$ and radius $r$ by $B(x,r)$.
  Use the compactness of $K$ to choose constants $r>0$ and $\delta>0$ such that 
  \begin{enumerate}[label=(\roman*)]
    \item\label{item:small-ball-contractible} $B(f(x),r)$ is contained in an contractible open set in~$V$ for all $x\in K$;
    \item\label{item:short-geodesic} there is a short geodesic between $x$ and $y$ in $K$ whenever $\rho(x,y)<\delta$.
  \end{enumerate}
  In addition, since $f$ is continuous, we may assume that
  \begin{enumerate}[resume*]
    \item\label{item:control-f} $d(f(x),f(y)) < r/2$ if $\rho(x,y)<\delta$.
  \end{enumerate}
  Use the continuity of $f^{-1}\colon f(K) \to K$ to choose $\delta' >0$ such that 
  \begin{enumerate}[resume*]
    \item\label{item:control-f-inverse} $\rho(x,y)<\delta$ if $d(f(x),f(y))<\delta'$.
  \end{enumerate}

  Let $\epsilon=\min(\delta', r)/2$.
  Suppose $g \colon K\to V$ is $\epsilon$-close to~$f$.
  If $g(x)=g(y)$, then
  \[
    d(f(x), f(y))\leq d(f(x),g(x))+d(g(y), f(y))<2\epsilon \le \delta'
  \]
  and hence $\rho(x,y)<\delta$ by~\ref{item:control-f-inverse}.
  There is a short geodesic $\alpha$ between $x$ and $y$ in~$K$ by~\ref{item:short-geodesic}.
  For any $z$ on $\alpha$, $\rho(x,z)\leq \rho(x,y)<\delta$, so $d(f(x),f(z)) < r/2$ by~\ref{item:control-f}.
  It follows that
  \[
    d(f(x),g(z)) \leq d(f(x),f(z))+d(f(z),g(z) )< r/2 +\epsilon \leq r.
  \]
  That is, $g(\alpha)\subset B(f(x), r)$.
  By~\ref{item:small-ball-contractible}, the double point loop $g(\alpha)$ is null-homotopic in~$V$.
\end{proof}

Returning to the proof of Theorem~\ref{theorem:stable-smoothing}, recall that 
the smoothly immersed surface $R^+$ has topological Whitney disks $W_i$ introduced by topological finger moves in~$M$.
Let $p_i$ be one of the two double points of $R^+$ paired by~$W_i$.

\begin{lemma}
  \label{lemma:finger-move-double-point-loops}
  The topological finger moves can be arranged so that the following hold.
  \begin{enumerate}
    \item\label{item:finger-move-whitney-disk} The Whitney disks $W_i$ are pairwise disjoint and the interior of each $W_i$ is disjoint from~$R^+$.
    \item\label{item:finger-move-double-point-loop} Each double point $p_i$ of $R^+$ has a double point loop $\alpha_i\subset R^+$ such that $\alpha_i\cap W_i=\{p_i\}$, $\alpha_i\cap W_j=\alpha_i\cap \alpha_j = \emptyset$ for $i\ne j$, and $\alpha_i$ has no self-intersection.
    \item\label{item:finger-move-parallel} Each $\alpha_i$ has a parallel $\alpha_i'$ obtained by pushing $\alpha_i$ off $R^+$ slightly, avoiding $\bigsqcup_j W_j$, such that $\alpha_i'$ is null-homotopic in~$M\setminus (R^+ \cup \bigsqcup_j W_j)$.
  \end{enumerate}
\end{lemma}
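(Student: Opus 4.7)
The plan is to construct all the data in the smooth manifold~$V$, where general position and the \emph{controlled} form of Theorem~\ref{theorem:quinn-finger-move} are available, and then transport by the topological embedding $h_1\colon V \hookrightarrow M$, which is smooth near~$R'$. Since $h_1$ is injective, any disjointness or embeddedness established in $V$ survives; since $h_1$ restricts to a homeomorphism from $V \setminus (R' \cup \bigsqcup_j D_j)$ onto its image in $M \setminus (R^+ \cup \bigsqcup_j W_j)$, any null-homotopy upstairs descends. It therefore suffices to produce smooth data in $V$ with the analogous properties.

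Parts~(1) and~(2) are routine general position in the smooth manifold $V$. The finger moves provided by Theorem~\ref{theorem:quinn-finger-move} are parametrized by arcs $\beta_i \subset V$ with endpoints on~$R$ and interiors in $V\setminus R$, which we take pairwise disjoint. The associated standard Whitney disks $D_i$ are thin bands along $\beta_i$, hence pairwise disjoint with interiors off~$R'$; this is~(1). For~(2), each double point $p_i$ admits a natural short embedded double point loop $\alpha_i^V \subset R'$ through~$p_i$, obtained by concatenating a short arc along the $i$th finger with a short arc along the base sheet of $R$ near $\beta_i$ joining the two preimages of~$p_i$. General position makes the $\alpha_i^V$ pairwise disjoint, each embedded, and intersecting $\bigsqcup_j D_j$ only at the corner $p_i$ of~$D_i$. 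Setting $\alpha_i = h_1(\alpha_i^V)$ and $W_i = h_1(D_i)$ proves~(1) and~(2).

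The main obstacle is part~(3): the parallel must be null-homotopic in the \emph{complement} of $R^+ \cup \bigsqcup_j W_j$, not just in~$M$. The crucial ingredient is control. Invoking the controlled form of Theorem~\ref{theorem:quinn-finger-move}, each finger move can be performed with diameter smaller than the $\epsilon$ furnished by Lemma~\ref{lemma:pi_1-null-approx} applied to the embedding $R \hookrightarrow V$. Then each $\alpha_i^V$ lies in a small contractible 4-ball $B_i \subset V$, and by further shrinking we may assume $B_i \cap D_j = \emptyset$ for $j \ne i$, while $B_i \cap (R' \cup D_i)$ is a standard local model consisting of two transverse sheets at~$p_i$ together with a small corner of~$D_i$.

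Now push $\alpha_i^V$ off $R'$ using a generic normal section along the underlying path in the abstract~$R$, choosing the push-off direction at each preimage of $p_i$ so as to be transverse to both local sheets at~$p_i$, and closing the resulting loose ends by a short connector arc routed in a direction complementary to the corner of~$D_i$. A direct linking-number check in the standard local model inside $B_i$ shows that these choices make the resulting parallel $\alpha_i^{\prime V} \subset B_i \setminus (R' \cup \bigsqcup_j D_j)$ have vanishing linking with both local sheets of $R'$ and with~$D_i$, so that it is null-homotopic in $B_i \setminus (R' \cup D_i)$. Transporting by $h_1$ yields the required parallel $\alpha_i'$ together with a null-homotopy in $M \setminus (R^+ \cup \bigsqcup_j W_j)$, completing~(3).
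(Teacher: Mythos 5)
Your overall strategy --- reduce to $(V,R',\{D_i\})$ via $h_1$, and use the controlled Addendum to Theorem~\ref{theorem:quinn-finger-move} together with Lemma~\ref{lemma:pi_1-null-approx} to make the double point loops short --- is exactly the paper's, and your treatment of parts (1) and (2) is essentially fine (modulo the small point that two short geodesics \emph{on the surface} $R$ generically meet in points, so you need the paper's extra finger moves on $R^+$, not ambient general position, to separate the $\alpha_i$'s). The problem is part~(3), which is where the real content of the lemma lies.

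There are two gaps in your argument for (3). First, the claimed local model is wrong: a ball $B_i$ containing the loop $\alpha_i$ must contain the image of the entire short geodesic from $x$ to $y$ in $R$, hence an immersed band of $R'$ carrying that loop together with the whole finger (both double points $p_i$ and $q_i$) and the Whitney disk $D_i$ --- not merely ``two transverse sheets at $p_i$ plus a corner of $D_i$.'' Moreover $B_i$ cannot be shrunk to exclude other sheets of $R'$ or other $D_j$ while still containing $\alpha_i$, since $\alpha_i$ has a definite size. Second, and more fundamentally, a ``linking-number check'' only controls the class of $\alpha_i'$ in $H_1$ of the complement, whereas (3) demands null-\emph{homotopy} in $M\setminus(R^+\cup\bigsqcup_j W_j)$; the inference from vanishing linking numbers to null-homotopy is precisely the kind of statement that fails for surface complements in dimension $4$. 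The paper closes this gap by a global, not local, argument: the fiber exact sequence $\pi_1(\R^2\setminus 0)\to\pi_1(V\setminus R)\to\pi_1(R)$ for the normal bundle shows that the kernel of $\pi_1(V\setminus R)\to\pi_1(V)$ is \emph{cyclic}, generated by a single meridian, and the isomorphism $\pi_1(V\setminus(R'\cup\bigsqcup_j D_j))\cong\pi_1(V\setminus R)$ of \cite[Lemma~2.6]{Cha-Kim:2016-1} transfers this to the complement of the immersed surface together with the Whitney disks. Only then does the homological datum (the power $k$ of the meridian represented by $\alpha_i'$) determine the homotopy class, so that $k$ framing twists kill it. Your proposal contains no substitute for either of these ingredients; without them, $[\alpha_i']$ lives a priori in a nonabelian group where your framing adjustments cannot be shown to reach the identity.
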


Lemma~\ref{lemma:finger-move-double-point-loops}~\ref{item:finger-move-whitney-disk} and~\ref{item:finger-move-double-point-loop} are straightforward, as shown in the proof below.
Lemma~\ref{lemma:finger-move-double-point-loops}~\ref{item:finger-move-parallel} is nontrivial.
Note that when $R$ is not simply connected, $[\alpha_i]\in \pi_1(M)$ depends on the choice of~$\alpha_i$.
In addition, even when $\alpha_i$ is fixed, $[\alpha_i']\in \pi_1(M\setminus R^+)$ is not uniquely determined.
It is altered by twisting the framing that we use to push~$\alpha_i$.
So, we need to choose both $\alpha_i$ and $\alpha_i'$ carefully.

The proof of Lemma~\ref{lemma:finger-move-double-point-loops}~\ref{item:finger-move-parallel} relies on a ``controlled'' version of Theorem~\ref{theorem:quinn-finger-move}, which is a deep result of Quinn.
It essentially tells us that the finger moves can be assumed to be arbitrarily ``small.''
In our case, if finger moves were small enough, then the immersed surface $R'$ would be an approximation sufficiently close to the embedded surface $R$, so double point loops would be inessential by Lemma~\ref{lemma:pi_1-null-approx}.
For a rigorous proof, we will use an explicit statement given as Addendum to Theorem~\ref{theorem:quinn-finger-move} below, which is proven in~\cite[8.1A, 7.2C and 7.1D]{Freedman-Quinn:1990-1}\@.
We say that a homotopy $\{f_t\colon A\to X\}$ in a metric space $(X,d)$ \emph{has diameter less than $\epsilon$} if $d(f_0(x),f_t(x))<\epsilon$ for all $t$ and~$x$.
As a special case, a regular homotopy of diameter less than $\epsilon$ is defined too.

\begin{theorem-named}[Addendum to Theorem~\ref{theorem:quinn-finger-move}]
  Fix a smooth metric $d$ on~$V$.
  Let $\epsilon>0$ be arbitrarily given.
  Then in Theorem~\ref{theorem:quinn-finger-move}, we may assume that
  the smoothly immersed surface $R'\subset V$ is obtained from $R$ by applying a smooth regular homotopy of diameter less than $\epsilon$ in $V$, which consists of smooth isotopy and smooth finger moves.
\end{theorem-named}

\begin{proof}
  The statement of Theorem~\ref{theorem:quinn-finger-move} is from \cite[8.1A]{Freedman-Quinn:1990-1}, but \cite[8.1A]{Freedman-Quinn:1990-1} does not state the above addendum explicitly.
  So, for the reader's convenience, we indicate from where it can be found in~\cite{Freedman-Quinn:1990-1}.
  In the proof of \cite[Theorem~8.1A]{Freedman-Quinn:1990-1}, which is given in~\cite[8.1, p.~115--116]{Freedman-Quinn:1990-1}, a ``technically controlled $h$-cobordism theorem'' \cite[7.2C]{Freedman-Quinn:1990-1} is used.
  In particular, in the last paragraph of the proof of \cite[7.2C]{Freedman-Quinn:1990-1}, arguments in~\cite[7.1D]{Freedman-Quinn:1990-1} are used to construct the desired smooth isotopy and finger moves which changes our $R$ to $R'$ in~$V$.
  In the arguments of \cite[7.1D]{Freedman-Quinn:1990-1}, on page 107,
  the isotopy and finger moves are performed near certain immersed spheres and disks, whose sizes are small enough in case of the proof of \cite[7.2C]{Freedman-Quinn:1990-1}, so that it is controlled as desired.
  In fact, in the last sentence of the statement of \cite[Theorem~7.2C]{Freedman-Quinn:1990-1} on p.~110, it is explicitly said that the regular homotopy consists of isotopy and finger moves and has diameter less than~$\epsilon$.
\end{proof}

We remark that the isotopy $\{h_t\colon V\hookrightarrow M\}$ in Theorem~\ref{theorem:quinn-finger-move} can also be controlled to have diameter less than $\epsilon$, as stated in~\cite[8.1A]{Freedman-Quinn:1990-1}.
We do not use it in this paper.

\begin{proof}[Proof of Lemma~\ref{lemma:finger-move-double-point-loops}]
  Since the topological embedding $h_1\colon V\hookrightarrow M$ takes $R'$ and $D_i$ to $R^+$ and $W_i$, it suffices to show that Lemma~\ref{lemma:finger-move-double-point-loops}~\ref{item:finger-move-whitney-disk}, \ref{item:finger-move-double-point-loop} and~\ref{item:finger-move-parallel} hold for $(M,R^+,\{W_i\})= (V,R',\{D_i\})$.

  Let $f\colon R\hookrightarrow V$ be the inclusion.
  Apply Lemma~\ref{lemma:pi_1-null-approx} to $f$ to choose $\epsilon>0$ such that if $g\colon R\to V$ is $\epsilon$-close to $f$, then each double point of $g$ has a double point loop null-homotopic in~$V$.

  By the Addendum to Theorem~\ref{theorem:quinn-finger-move}, we may assume that there is a smooth regular homotopy $\{f_t\colon R \to V\}_{0\le t\le 1}$ with $f=f_0$, of diameter less than $\epsilon$, consisting of smooth isotopy and smooth finger moves, which changes $R=f(R)$ to $R'=f_1(R)$.

  If the $i$th finger move is performed along a guiding arc $\gamma_i$, the previously introduced Whitney disks $D_1$, $\ldots\,$,~$D_{i-1}$ can be slightly isotoped to avoid~$\gamma_i$. (This isotopy is allowed to move $\partial D_i$ in $R^+$ but fixes the double points.)
  So, we can assume that the disks $D_i$ are pairwise disjoint and the interior of each $D_i$ is disjoint from~$R'$.
  Since $f_1$ is $\epsilon$-close to the embedding $f\colon R\hookrightarrow V$, each double point $p_i$ on $R'$ has a double point loop $\alpha_i\subset R'$ which is null-homotopic in~$V$.
  Let $q_i$ be the double point paired with $p_i$ by~$W_i$.
  If some $\alpha_j$ intersects $\alpha_i$ or $\partial D_i$, apply a finger move to $\alpha_j$ on~$R^+$, along a subarc in $\partial D_i\cup \alpha_i$ from the intersection to $q_i$, to eliminate the intersection.
  See Figure~\ref{figure:clean-double-point-loop}.
  Self-intersections of $\alpha_i$ are eliminated similarly.
  So we may assume that $\alpha_i\cap D_i=\{p_i\}$, $(\alpha_i\cup D_i)\cap D_j=\emptyset$ for $i\ne j$ and $\alpha_i$ has no self intersection.
  Push $\alpha_i$ slightly off $R'$ to obtain a parallel~$\alpha_i'\subset V$ which is disjoint from $R'\cup\bigsqcup_j D_j$.

  \begin{figure}
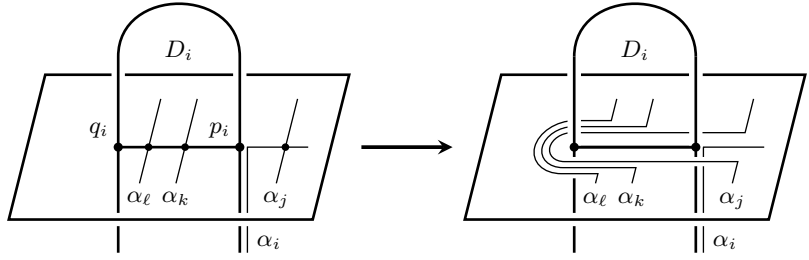

    \includestandalone{figure-clean-double-point-loop}
    \caption{Finger moves removing intersections of $\alpha_i \cup D_i$ with other $\alpha_j$.}
    \label{figure:clean-double-point-loop}
  \end{figure}

  By the fiber exact sequence
  \[
    \pi_1(\R^2\setminus 0) \to \pi_1(V\setminus R) \to \pi_1(R)
  \]
  for the $(R^2\setminus 0)$-bundle $V\setminus R \to R$, the kernel of $\pi_1(V\setminus R)\to \pi_1(R)\cong\pi_1(V)$ is generated by a meridian of~$R$.
  It is known that the complements $V\setminus (R'\cup \bigsqcup_j D_j)$ and $V\setminus R$ have isomorphic fundamental groups~\cite[Lemma~2.6; see also Figure~1]{Cha-Kim:2016-1}.
  More specifically, viewing $R$ as a subset of a regular neighborhood $N(R'\cup \bigsqcup_j D_j)$ of the 2-complex $R'\cup \bigsqcup_j D_j$, the inclusion induces an isomorphism
  \[
    \pi_1(V\setminus (R'\cup \bigsqcup_j D_j)) \xrightarrow{\cong} \pi_1(V\setminus R).
  \]
  Since inclusions commute, it follows that the kernel of $\pi_1(V\setminus (R'\cup \bigsqcup D_i)) \to \pi_1(V)$ is equal to that of $\pi_1(V\setminus R)\to \pi_1(V)$.
  It is a cyclic group generated by a meridian of~$R$.

  Since $\alpha_i$ is null-homotopic in $V$, so is $\alpha_i'$, and thus $\alpha_i'$ represents an element in the kernel of $\pi_1(V\setminus (R'\cup \bigsqcup D_i)) \to \pi_1(V)$.
  It follows that $\alpha_i'$ is the $k$th power of a meridian in $\pi_1(V \setminus (R'\cup \bigsqcup D_i))$ for some~$k$.
  Therefore, by twisting the framing used to push $\alpha_i$ to $\alpha_i'$ $-k$ times, we obtain a new parallel $\alpha_i'$ which is null-homotopic in~$V \setminus (R'\cup \bigsqcup D_i)$.
  This completes the proof of Lemma~\ref{lemma:finger-move-double-point-loops}.
\end{proof}

\begin{remark}
  When each component of the given 2-manifold $R$ is a disk or sphere, the proof of Lemma~\ref{lemma:finger-move-double-point-loops} simplifies significantly.
  In this special case, $\pi_1(V)\cong \pi_1(R)$ is trivial, so without relying on the Addendum of Theorem~\ref{theorem:quinn-finger-move}, one sees that a parallel $\alpha'$ is always a power of a meridian of~$R$ by the argument in the second paragraph from the last in the above proof.
\end{remark}

\subsection{Geometric duals for Whitney disks in a stabilization}
\label{subsection:dual-for-Whitney-disks}

In this subsection, we finish the proof of Theorem~\ref{theorem:stable-smoothing}\@.

The following result relates the $\pi_1$-nullity of $\alpha_i'$ to the existence of a \emph{collection} of \emph{immersed} spheres which are geometrically dual to the \emph{collection} of the Whitney disks~$W_i$.

\begin{lemma}[{\cite[Sections 4.2 and 4.7]{Cha-Orr-Powell:2020-1}}]
  \label{lemma:duals-from-accessory-disks}
  If each $p_i$ has a parallel $\alpha_i'$ of a double point loop which is null-homotopic in $M\setminus (R^+ \cup \bigsqcup_j W_j)$, then there exist immersed framed spheres $S_i$ in $M\setminus R^+$ such that $S_i$ is geometrically dual to $W_i$ and disjoint from $W_j$ for $i\ne j$.
\end{lemma}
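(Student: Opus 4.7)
The plan is to construct each dual sphere $S_i$ by capping the pushed-off double point loop $\alpha_i'$ on two sides: using the null-homotopy supplied by the hypothesis on the ``outside,'' and using a small local ``accessory cap'' near $p_i$ on the ``inside'' which carries the single geometric intersection with $W_i$.

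First I would work in a small smooth $4$-ball $B_i$ centered at the double point $p_i$, in which $R^+$ looks like two transverse smooth $2$-disks $P_i$ and $Q_i$ meeting at $p_i$, and $W_i$ restricts to a small ``Whitney corner'' whose boundary contains two arcs, one on $P_i$ and one on $Q_i$, meeting at $p_i$. I would arrange the double point loop $\alpha_i$ so that it enters $B_i$ along one sheet, passes through $p_i$, and exits along the other, and then push it off $R^+$ to a parallel $\alpha_i'$ by the framing fixed in Lemma~\ref{lemma:finger-move-double-point-loops}. Inside $B_i$, one constructs a small meridional disk $c_i$ with $\partial c_i$ equal to the portion of $\alpha_i'$ inside $B_i$ (completed by a short arc on $\partial B_i$), such that $c_i$ crosses $W_i$ transversely in exactly one point near $p_i$ and is otherwise disjoint from $R^+ \cup \bigsqcup_j W_j$. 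This is the classical ``accessory cap'' at a transverse self-intersection, and its existence is a direct inspection of the Hopf-link picture $(P_i \cup Q_i) \cap \partial B_i$ in $S^3$.

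Next, the hypothesis provides an immersed disk $A_i$ in $M \setminus (R^+ \cup \bigsqcup_j W_j)$ with $\partial A_i = \alpha_i'$. After a small collar adjustment so that the boundaries match, I would set $S_i := c_i \cup A_i$. By construction, $S_i$ is an immersed sphere in $M \setminus R^+$; it meets $W_i$ transversely in exactly one point, coming from the local intersection $c_i \cdot W_i$; and it is disjoint from $W_j$ for $j \neq i$, because $c_i$ avoids the other Whitney disks by Lemma~\ref{lemma:finger-move-double-point-loops} and $A_i$ avoids all $W_j$ by the hypothesis. Self-intersections of $S_i$ and mutual intersections between different $S_j$'s are permitted by the statement.

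Finally, the normal bundle of $S_i$ has an integer framing obstruction, which I would kill by boundary-twisting $A_i$ around $\alpha_i'$ the appropriate number of times; each twist changes the framing by $\pm 1$ at the cost of a self-intersection of $A_i$, which is acceptable for an \emph{immersed framed} sphere. The step I expect to require the most care is the local construction of $c_i$: one must verify that the small meridian disk to the Whitney arc of $W_i$ on one sheet really co-bounds, together with the external accessory disk $A_i$, an embedded annular neighborhood of $\alpha_i'$, so that the two pieces glue along a common curve and the geometric count $c_i \cdot W_i = 1$ is preserved. This verification is essentially a model-computation at a transverse double point in $\R^4$, and is carried out in detail in Cha--Orr--Powell~\cite{Cha-Orr-Powell:2020-1}.
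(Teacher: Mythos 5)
Your construction does not produce a sphere, and this is a genuine gap. You define $S_i := c_i \cup A_i$, but the boundaries of these two disks do not match, and no ``small collar adjustment'' can make them match. By your own description, $\partial c_i = (\alpha_i' \cap B_i) \cup \delta$ where $\delta$ is an arc on $\partial B_i$, while $\partial A_i = \alpha_i'$, which is the entire loop. These two boundaries coincide only along the arc $\alpha_i' \cap B_i$. Gluing two disks along a common boundary arc produces a disk (with boundary the symmetric difference $\delta \cup (\alpha_i' \setminus \inte B_i)$), not a sphere. You would need a further piece to close this up, and the hypothesis gives no obvious supply of such a piece disjoint from $R^+$ and from the other $W_j$.

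The paper avoids this by starting from a \emph{closed} surface near $p_i$ that is already geometrically dual to $W_i$, namely the linking (Clifford) torus $T_i = S^1 \times S^1 \subset \partial(D^2\times D^2)$, rather than a local cap with boundary. It then performs a two-step surgery: first it takes the annulus $C_i$ (the normal circle bundle of $R^+$ over $\alpha_i \setminus \inte(D^2\times D^2)$, whose two boundary circles lie on $T_i$), arranges $\alpha_i'$ to lie on $C_i$, and surgers $C_i$ along $\alpha_i'$ using two copies of $A_i$ to produce a disk $\Delta_i$ in $M\setminus(R^+\cup\bigsqcup_j W_j)$ bounded by a $(1,1)$-curve of $T_i$; second it surgers $T_i$ along $\partial\Delta_i$ using $\Delta_i$ and a parallel copy to obtain the closed immersed sphere $S_i$. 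The annulus $C_i$ is exactly the piece your construction is missing: it is what connects the linking torus inside $B_i$ to the exterior accessory disk $A_i$ so that the result closes up into a sphere. Dropping it and using a local cap in its place is not a simplification but a structural change that fails.

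Two secondary points. First, your appeal to boundary-twisting $A_i$ to kill the framing obstruction is not the right mechanism: the paper instead shows $S_i$ is framed by a homology argument, namely $S_i$ is homologous to the framed torus $T_i$ so $[S_i]\cdot[S_i]=[T_i]\cdot[T_i]=0$, together with interior twisting to kill the algebraic self-intersection. Boundary-twisting $A_i$ along $\alpha_i'$ in the $R^+$-normal direction would generically introduce intersections of $S_i$ with $R^+$, which you need to avoid, and in any case boundary-twisting is a framing change of a Whitney disk relative to a surface on which its boundary lies, which is not the situation here ($\alpha_i'$ lies in free space). Second, the existence of your accessory cap $c_i$ is not as automatic as a Hopf-link inspection suggests: a naive disk through the origin (or even near it) tends to meet $R^+$ or meet $W_i$ along an arc rather than transversely; the virtue of using $T_i$ is precisely that it visibly meets $W_i \cap (D^2\times D^2)=[-1,0]\times[-1,0]$ transversely in the single point $(-1,-1)$ and is disjoint from both sheets of $R^+$.
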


In \cite[Sections 4.2 and 4.7]{Cha-Orr-Powell:2020-1}, they need more than Lemma~\ref{lemma:duals-from-accessory-disks} and thus the arguments are more complicated than what we need.
We give a simplified proof of Lemma~\ref{lemma:duals-from-accessory-disks} below.

\begin{figure}[t]
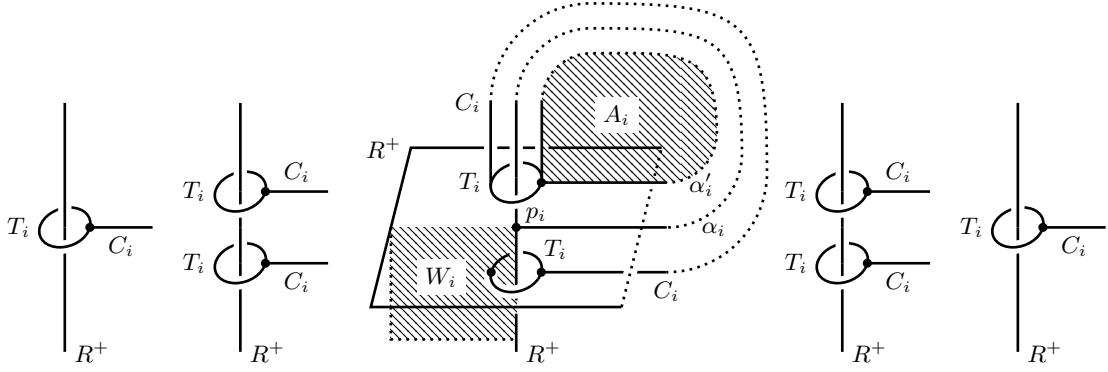

  \includestandalone{figure-double-point-nbhd}
  \caption{The double point loop $\alpha_i$, parallel $\alpha_i'$, Whitney disk $W_i$, immersed disk $A_i$ and linking torus $T_i$ near~$p_i$.}
  \label{figure:double-point-nbhd}
\end{figure}

\begin{figure}[t]
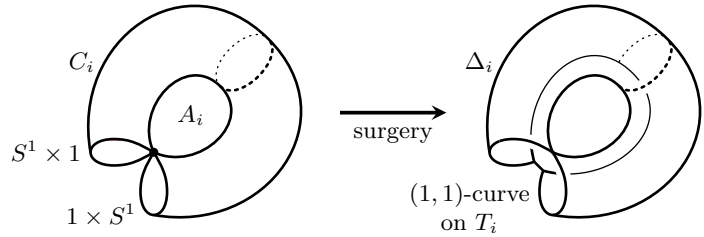

  \includestandalone{figure-surgery-on-annulus}
  \caption{Surgery on the annulus $C_i$ using the disk $A_i$.}
  \label{figure:surgery-on-annulus}
\end{figure}

\begin{proof}[Proof of Lemma~\ref{lemma:duals-from-accessory-disks}]
  Identify a smooth ball neighborhood of $p_i$ with $D^2\times D^2$ by a diffeomorphism, so that $p_i=(0,0)$, the involved sheets are $D^2\times 0$ and $0\times D^2$, $W_i\cap (D^2\times D^2) = [-1,0] \times [-1,0]$ and $\alpha_i\cap (D^2\times D^2)=([0,1]\times 0) \cup  (0\times [0,1])$.
  See Figure~\ref{figure:double-point-nbhd}.
  Note that $T_i=S^1\times S^1 \subset D^2\times D^2$ is a linking torus for~$p_i$, which is geometrically dual to~$W_i$.

  Fix a normal circle bundle $\zeta$ of $R^+$ in $M$ such that its restriction on $\alpha_i \cap (D^2\times D^2)$ is $([0,1]\times S^1) \cup (S^1\times[0,1])$.
  The restriction of $\zeta$ on $\alpha_i \setminus \inte(D^2\times D^2)$ is an annulus $C_i$ bounded by the standard basis curves $(S^1\times 1) \cup (1\times S^1)$ of~$T_i$.
  (More precisely, $C_i$ is an annulus with two boundary points identified.)
  We may assume that the parallel $\alpha_i'$ is based at $(1,1)\in S^1\times S^1$ and lies on~$C_i$.
  Let $A_i$ be an immersed disk in $M\setminus (R^+ \cup \bigsqcup W_j)$ bounded by~$\alpha_i'$.
  Perform surgery on $C_i$ along $\alpha_i'$ using two copies of $A_i$, to get a map $D^2 \to M\setminus (R^+ \cup \bigsqcup W_j)$, and perturb it slightly to obtain an immersed disk $\Delta_i$ in $M\setminus (R^+ \cup \bigsqcup W_j)$ bounded by a $(1,1)$-curve of~$T_i$.
  See Figure~\ref{figure:surgery-on-annulus}.
  The disk $\Delta_i$ has a unique framing in $M$, and we may assume that its restriction on $\partial \Delta_i$ agrees with the framing determined by the normal direction of $\partial \Delta_i$ in~$T_i$, by boundary twisting~\cite[1.3]{Freedman-Quinn:1990-1}.
  Then, using $\Delta_i$ and a parallel copy, do surgery on $T_i$ along $\partial \Delta_i$, to obtain an immersed sphere~$S_i$.
  Since $T_i$ is geometrically dual to $W_i$ and $\Delta_i \subset M\setminus (R^+ \cup \bigsqcup W_j)$, $S_i$ is geometrically dual to $W_i$ and disjoint from $W_j$ for $i \ne j$.
  We may assume that $S_i$ has vanishing self-intersection number by interior twisting~\cite[1.3]{Freedman-Quinn:1990-1}.
  (In fact, one can verify that no interior twist is needed.)
  Since $T_i$ is framed and $S_i$ is homologous to $T_i$ in~$M$, $[S_i]\cdot[S_i] = [T_i]\cdot[T_i]=0$.
  It follows that $S_i$ has vanishing Euler number, i.e.,\ $S_i$ is framed.
\end{proof}

Now we continue the proof of Theorem~\ref{theorem:stable-smoothing}\@.
In our case, each $\alpha_i'$ is null-homotopic in $M\setminus (R^+\cup\bigsqcup W_j)$ by Lemma~\ref{lemma:finger-move-double-point-loops}, so it follows that there are immersed sphere $S_i$ described in Lemma~\ref{lemma:duals-from-accessory-disks}.
By smooth approximation, we may assume that each $S_i$ is smoothly immersed and $S_i$ and $S_j$ are transverse for $i\ne j$.

Take the connected sum of $M$ with $k$ copies of $S^2\times S^2$, at an interior point of $M$ away from $R^+$, $W_i$ and $S_i$, where $k$ is the number of the Whitney disks~$W_i$.
Modify the spheres $S_i$ in $M\# k(S^2\times S^2)$ as follows.
For brevity, we denote spheres obtained by modifying $S_i$ by the same symbol~$S_i$.
Tube $S_i$ to the $S^2\times *$ factor of the $i$th copy of $S^2\times S^2$ in $M\# k(S^2\times S^2)$.
Now the sphere $*\times S^2$ in the $i$th copy of $S^2\times S^2$ is a smooth geometric dual for the spheres~$S_i$.
Remove all self and mutual intersections of the spheres $S_i$ by Norman moves, i.e.,\ for each intersection, add a parallel copy of the geometric dual of $S_i$ to one of the involved sheets, to remove the intersection.
Now each $S_i$ is a smooth geometric dual for~$W_i$, and $S_i\cup W_i$ is disjoint from $S_j\cup W_j$ for $i\ne j$.

Apply the Light Bulb Smoothing Theorem~\ref{theorem:light-bulb-smoothing}, to obtain new smooth Whitney disks $W_i'$ which are topologically isotopic to $W_i$ (rel $\partial$) in $M\# k(S^2\times S^2)$.
Since the 2-complexes $W_i\cup S_i$ are pairwise disjoint, the smooth disks $W_i'$ are pairwise disjoint and the isotopies between $W_i$ and $W_i'$ are pairwise disjoint.
Thus, $\bigsqcup W_i$ is topologically isotopic to $\bigsqcup W_i'$.

Eliminate all double points of $R^+$ by Whitney moves using~$W_i'$, and call the result~$R''\subset M\# k(S^2\times S^2)$.
The surface $R''$ is smooth in $M\# k(S^2\times S^2)$, since the immersed surface $R^+$ and the Whitney disks $W_i'$ are smooth.
Moreover, since $R$ is obtained from $R^+$ by Whitney moves using $\bigsqcup W_i$ which is topologically isotopic to $\bigsqcup W_i'$, it follows that $R$ is topologically isotopic to $R^+$ in $M\# k(S^2\times S^2)$.

This completes the proof of the Stable Smoothing Theorem~\ref{theorem:stable-smoothing}\@.

\section{Topological Dax invariant}
\label{section:top-dax-invariant}

In this section, we develop the Dax invariant for topological disks, by extending the Dax invariant of smooth disks which was used in recent studies of light bulb theorems~\cite{Gabai:2021-1,Kosanovic-Teichner:2024-1,Kosanovic-Teichner:2024-2,Schwartz:2021-1}.
In particular, we show that the Dax invariant for topological disks is invariant under topological isotopy, as stated in Theorem~\ref{theorem:dax-top} in the introduction.

In Section~\ref{subsection:preliminary-smooth-dax}, we review necessary definitions and properties of the smooth Dax invariant.
In Section~\ref{subsection:dax-for-top-disks}, we define and study the Dax invariant for topological disks in smooth 4-manifolds.
A key ingredient is our smoothing technique for disks.
In Section~\ref{subsection:dax-in-top-4-manifold}, we generalize the Dax invariant to disks in topological 4-manifolds.

\subsection{A quick review of the smooth Dax invariant}
\label{subsection:preliminary-smooth-dax}

Definitions and results in this subsection are due to Dax~\cite{Dax:1972-1}, Gabai~\cite{Gabai:2021-1} and Kosanovi\'c--Teichner~\cite{Kosanovic-Teichner:2024-1,Kosanovic-Teichner:2024-2}.
Our treatment follows closely~\cite{Gabai:2021-1,Kosanovic-Teichner:2024-2}.

We begin with a general setup for the definition of the smooth Dax invariant.
Let $M$ be a smooth 4-manifold, possibly with nonempty boundary.
Fix two points $x_0$, $x_1$ in~$M$, $x_0\ne x_1$.
Let $H\colon I^3 \to M$ be a map that satisfies the following boundary condition:
\begin{enumerate}[label=(D)]
  \item\label{item:dax-boundary-condition} $H$ is smooth on $\partial I^3$, and for each $(t,u)\in \partial I^2$, the path $\alpha_{t,u}\colon I\to M$ defined by $\alpha_{t,u}(s) = H(s,t,u)$ is an embedding such that $\alpha_{t,u}(i) = x_i$ for $i=0,1$.
\end{enumerate}
Define the \emph{honest track} $\widebar H\colon I^3\to M\times I^2$ of $H$ by $\widebar H(s,t,u)=(H(s,t,u),t,u)$.
By~\cite[Chapters III and VI]{Dax:1972-1}, $H$ can be perturbed, by homotopy rel $\partial I^3$ with arbitrarily small diameter, to a smooth map $F\colon I^3 \to M$ such that $\widebar{F} \colon I^3\to M\times I^2$ is a smooth generic immersion, i.e., $\widebar F$ is an embedding except at finitely many transverse double points.
We call $\widebar F$ a \emph{generic track} of~$H$.

A double point $p$ of $\widebar{F}$ is of the form $p=\widebar{F}(x)=\widebar{F}(y)$ with $x=(s_0,t,u)$ and $y=(s_1,t,u)$.
We may assume $s_0 < s_1$.
The boundary condition~\ref{item:dax-boundary-condition} ensures that $x$ and $y$ are interior points of~$I^3$.
The sign $\epsilon(p)\in \{-1,1\}$ of $p$ is defined by comparing the two orientations of $\widebar{F}_*(T_x(I^3)) \oplus \widebar{F}_*(T_y(I^3)) = T_p(M\times I^2)$ induced by the orientations of $I^3$ and $M\times I^2$.
The key is that the first coordinates $s_0$, $s_1$ give an order of the pre-images $x$ and $y$ of~$p$.
It enables us to define $\epsilon(p)$ without ambiguity.
In addition, let $g(p)\in \pi_1(M\times I^2) = \pi_1(M)$ be the class of the double point loop $\widebar{F}|_{[0,s_{1}]\times s\times t}\cdot(\widebar{F}|_{[0,s_{2}]\times s\times t})^{-1}$.

\begin{definition}[\cite{Dax:1972-1,Gabai:2021-1,Kosanovic-Teichner:2024-1,Kosanovic-Teichner:2024-2}]
  \label{definition:smooth-dax-for-homotopy}
  The Dax invariant of a map $H$ satisfying~\ref{item:dax-boundary-condition} is defined to be the following sum over all double points $p$ with nontrivial~$g(p)$:
  \[
    \dax(H):=\sum_{g(p)\neq 1}\epsilon(p)\cdot g(p) \in \Z[\pi_1(M) \setminus 1].
  \]
\end{definition}

Here, $\Z[\pi_1(M)\setminus 1]$ denotes the free abelian subgroup of the integral group ring $\Z[\pi_1(M)]$ generated by non-identity elements of~$\pi_1(M)$.

The invariant $\dax(H)$ has the following properties.
\begin{enumerate}
  \item \emph{Homotopy invariance.}
  The value of $\dax(H)$ is well-defined, independent of the choice of $F$, and invariant under homotopy of~$H$ rel~$\partial I^3$.
  \item \emph{Additivity.}
  If $H$, $H'\colon I^3\to M$ satisfy $H(s,t,1)=H'(s,t,0)$, 
  the concatenation $H\cdot H'\colon I^3 \to M$ is defined as usual, by $(H\cdot H')(s,t,u)=H(s,t,2u)$ for $1\le u\le \frac12$, $H'(s,t,2u-1)$ for $\frac12\le u\le 1$.
  If $H$ and $H'$ satisfy \ref{item:dax-boundary-condition}, then after altering $H$ near $u=1$ and $H'$ near $u=0$ by small homotopy if necessary, $H\cdot H'$ satisfies \ref{item:dax-boundary-condition} too, and $\dax(H\cdot H') = \dax(H) + \dax(H')$.
  \item \emph{Inversion.}
  For the homotopy inverse $H^{-1}$ defined by $H^{-1}(s,t,u)=H(s,t,1-u)$, $\dax(H^{-1}) = -\dax(H)$.
\end{enumerate}
A proof of (1) can be found in~\cite[Step~2 of the proof of Theorem~3.7]{Gabai:2021-1}, and (2), (3) are readily verified from the definition.

\subsubsection{Dax homomorphism of $\pi_3(M)$}
\label{subsubsection:dax-homomorphism-pi_3}

As a special case of the above, a homomorphism $d\colon \pi_3(M) \to \Z[\pi_1(M)\setminus 1]$ is defined.
Fix a smooth embedding $\zeta\colon I\hookrightarrow M$, with $\zeta(0)=x_0$ and $\zeta(1)=x_1$.
Note that $\pi_3(M)$ is in 1--1 correspondence with the set of homotopy (rel $\partial I^3$) classes of maps $H\colon I^3\to M$ whose value on the boundary is given by $H(s,t,u)=\zeta(s)$ for $(s,t,u)\in I\times\partial I^2$ and $H(s,t,u)=x_s$ for $(s,t,u)\in \partial I \times I^2$.
Such $H$ is called a \emph{kernel map} in~\cite{Gabai:2021-1}.
Due to~\cite[Theorem~3.15]{Kosanovic-Teichner:2024-2}, $\dax(H)$ lies in the subgroup
\[
  \Z[\pi_1(M)\setminus 1]^\sigma := \{ r \in \Z[\pi_1(M)\setminus 1] \;|\; \widebar r = r\}
\]
of elements fixed by the standard involution $\overline{\sum_g \lambda_g g} = \sum_g \lambda_g g^{-1}$.

\begin{definition}
  \label{definition:dax-homomorphism-pi_3}
  Define $d_\zeta\colon \pi_3(M) \to \Z[\pi_1(M)\setminus 1]^\sigma$ by $d_{\zeta}(\alpha) := \dax(H)$, where $H$ is a kernel map $I^3 \to M$ representing $\alpha\in \pi_3(M)$.
\end{definition}

Since the value of $\dax(H)$ is invariant under homotopy rel $\partial I^3$, the map $d_\zeta$ is well-defined on~$\pi_3(M)$.
Since the group operation on $\pi_3(M)$ corresponds to the concatenation of maps $H$, the additivity of $\dax(H)$ implies that $d_\zeta$ is a group homomorphism.

For brevity, we denote $d_\zeta$ by $d$ when the choice of $\zeta$ is clearly understood.

\subsubsection{Dax invariant for smooth disks}
\label{subsubsection:dax-for-smooth-disks}

Let $D_0$ and $D_1$ be smooth disks in a smooth 4-manifold $M$ which are homotopic rel~$\partial$.
Fix a quotient map $\phi\colon I^2\to D^2$ which restricts to a diffeomorphism of $\inte(I^2)$ onto $\inte(D^2)$ and sends $0\times I$, $1\times I$, $I\times 0$ and $I\times 1$ to $(-1,0)$, $(1,0)$ and lower and upper semicircles of $\partial D^2$ respectively, as illustrated below.
\begin{equation}
  \vcenter{\hbox{
  \begin{tikzpicture}[
    x=1.6cm, y=1.6cm,
    line width=.8pt,
    edge/.style={-stealth,shorten <=2.5*\r,shorten >=2.5*\r},
  ]
    \small
    \def\r{1pt}
    \fill (0,0) circle(\r) (1,0) circle(\r) (1,1) circle(\r) (0,1) circle(\r);
    \draw [edge] (0,0) -- (1,0) node[pos=.5,below]{$\zeta_-$};
    \draw [edge,-] (1,0) -- (1,1) node[pos=.5,right]{$x_1$};
    \draw [edge] (0,1) -- (1,1) node[pos=.5,above]{$\zeta_+$};
    \draw [edge,-] (0,0) -- (0,1) node[pos=.5,left]{$x_0$};
    \draw [-stealth] (1.5,.5) -- ++(.5,0) node[midway,above]{$\phi$};
    \begin{scope}[shift={(2.5,0)}]
      \fill (0,.5) circle(\r) node[left]{$x_0$} (1,.5) circle(\r) node[right]{$x_1$};
      \draw (0,.5) [edge] arc(180:0:.5) node[midway,above]{$\zeta_+$};
      \draw (0,.5) [edge] arc(180:360:.5) node[midway,below]{$\zeta_-$};
    \end{scope}
  \end{tikzpicture}
  }}
  \label{equation:disk-reparametrization-phi}
\end{equation}
View $D_i$ as a map $D_i\colon I^2\to M$ ($i=0,1$) by composing $D_i\colon D^2\hookrightarrow M$ with~$\phi$.
For a homotopy $H\colon I^3\to M$ from $D_0$ to $D_1$ rel~$\partial$, $\dax(H)$ lies in $\Z[\pi_1(M)\setminus 1]^\sigma$ by~\cite[Corollary 4.12]{Kosanovic-Teichner:2024-2}.
Let $\zeta_+\colon I\hookrightarrow M$ be the upper semicircle of $\partial D_0=\partial D_1$.

\begin{definition}
  \label{definition:smooth-dax-disks}
  The \emph{Dax invariant} for $(D_0,D_1)$ is defined by
  \[
    \Dax(D_0,D_1) = \dax(H) \in \Z[\pi_1 M \setminus 1]^\sigma / d_{\zeta_+}(\pi_3(M)).
  \]
\end{definition}

It is known that the modulo $d_{\zeta_+}(\pi_3(M))$ value of $\Dax(D_0,D_1)$ is well-defined, independent of the choice of a homotopy~$H$~\cite{Gabai:2021-1,Kosanovic-Teichner:2024-2}.

If $D_0$, $D_1$ and $D_2$ are smooth disks mutually homotopic rel $\partial$ in~$M$,
then $\Dax(D_0,D_2) = \Dax(D_0,D_1) + \Dax(D_1,D_2)$.
It readily follows from the additivity of $\dax(H)$.
Also, $\dax(H^{-1}) = -\dax(H)$ implies $\Dax(D_1,D_0) = -\Dax(D_0,D_1)$.

When we need to distinguish the Dax invariant of smooth disks from the topological version which we define below, we write $\Dax^\sm(D_0,D_1)$ for $\Dax(D_0,D_1)$.

\subsection{Dax invariant for topological disks}
\label{subsection:dax-for-top-disks}

The goal of this subsection is to generalize the smooth Dax invariant to topological disks in a smooth 4-manifold.
Our method for the topological case is a smooth approximation approach:
the Stable Smoothing Theorem~\ref{theorem:stable-smoothing} provides smoothly embedded disks that approximate topological disks, and we prove that the value of the smooth Dax invariant of these disks is well-defined, independent of the choice of smoothing (Theorem~\ref{theorem:dax-top-well-defined}), and invariant under topological isotopy (Theorem~\ref{theorem:dax-top-isotopy}).
In the next subsection, we develop the topological Dax invariant in a topological 4-manifold.

\begin{remark}
  \label{remark:dax-why-smoothing-approach}
  As an alternative attempt to define the topological Dax invariant, one might consider developing a topological version of Dax's original approach in Section~\ref{subsection:preliminary-smooth-dax}, in order to directly repeat Definition~\ref{definition:smooth-dax-for-homotopy}.
  The key is Dax's result that one can perturb a given homotopy between \emph{smooth} disks so that the associated honest track $I^3\to M\times I^2$ is a generic immersion~\cite[Chapters III and~VI]{Dax:1972-1}\@.
  We do not know how to do this in the topological case:
  Dax's perturbation relies on \emph{Thom's jet transversality}, for which no topological analog is known to the authors.
  (One could apply topological transversality to the honest track of a homotopy, but the resulting generic immersion would no longer be the honest track of a homotopy;
  to apply Definition~\ref{definition:smooth-dax-for-homotopy}, a homotopy with a generic honest track is required.)
  Our approach avoids this difficulty.
\end{remark}

The first step of our approach is to show the topological isotopy invariance of the smooth Dax invariant.
View a topological isotopy $\{h_t\colon D^2 \hookrightarrow M\}_{0\le t\le 1}$ between two disks as a homotopy $H\colon I^2 \times I \to M$, $H(s,t,u) = h_u(\phi(s,t))$, where $\phi\colon I^2\to D^2$ is the reparametrization in~\eqref{equation:disk-reparametrization-phi}.

\begin{lemma}[Topological invariance of the smooth Dax invariant]
  \label{lemma:smooth-dax-top-invariance}
  Let $M$ be a smooth 4-manifold and $H\colon I^3 \to M$ be a map satisfying the boundary condition~\ref{item:dax-boundary-condition}\@.
  If the honest track $\widebar H\colon I^3\to M\times I^2$ is a topological embedding, then $\dax(H)=0$ in $\Z[\pi_1(M)\setminus 1]$.
  Consequently, if two smooth disks $D_0$ and $D_1$ in $M$ are topologically isotopic, then $\Dax^\sm(D_0,D_1) = 0$.
\end{lemma}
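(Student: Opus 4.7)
The plan is to combine Dax's smooth approximation of tracks with the $\pi_1$-nullity principle of Lemma~\ref{lemma:pi_1-null-approx}. Fix any metric on $M \times I^2$. Since $\widebar{H}\colon I^3 \hookrightarrow M \times I^2$ is a topological embedding of a compact smooth manifold, Lemma~\ref{lemma:pi_1-null-approx} supplies $\epsilon > 0$ such that any map from $I^3$ to $M \times I^2$ that is $\epsilon$-close to $\widebar{H}$ has, at each of its double points, a double point loop---obtained as the image of a short Euclidean geodesic between the two preimages---which is null-homotopic in $M \times I^2$.

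Next, use Dax's approximation result to perturb $H$ rel $\partial I^3$ to a smooth $F$ whose honest track $\widebar{F}$ is a smooth generic immersion and is $\epsilon$-close to $\widebar{H}$; this is possible because the perturbation can be made of arbitrarily small diameter in $M$, and the $I^2$ coordinates of the honest tracks always agree. Now let $p = \widebar{F}(x) = \widebar{F}(y)$ be any double point of $\widebar{F}$, with $x = (s_0, t, u)$ and $y = (s_1, t, u)$, $s_0 < s_1$. The segment $\alpha := [s_0, s_1] \times \{t\} \times \{u\} \subset I^3$ is precisely the Euclidean short geodesic from $x$ to $y$, so by the choice of $\epsilon$ the loop $\widebar{F}(\alpha)$ based at $p$ is null-homotopic in $M \times I^2$. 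The double point loop used to compute $g(p)$, namely $\widebar{F}|_{[0,s_0]\times\{t\}\times\{u\}} \cdot (\widebar{F}|_{[0,s_1]\times\{t\}\times\{u\}})^{-1}$ based at $(x_0, t, u)$, differs from $\widebar{F}(\alpha)$ only by conjugation by $\widebar{F}|_{[0,s_0]\times\{t\}\times\{u\}}$, hence represents the same conjugacy class in $\pi_1(M \times I^2) = \pi_1(M)$. Therefore $g(p) = 1$, and since this holds for every double point, $\dax(H) = 0$.

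For the consequence, a topological isotopy $\{h_u\colon D^2 \hookrightarrow M\}$ from $D_0$ to $D_1$ determines $H\colon I^3 \to M$ via $H(s,t,u) = h_u(\phi(s,t))$. One checks directly that $\widebar{H}$ is a topological embedding: an equality $\widebar{H}(s_1, t_1, u_1) = \widebar{H}(s_2, t_2, u_2)$ immediately gives $t_1 = t_2$ and $u_1 = u_2$, and then injectivity of $h_{u_1}$ together with injectivity of $\phi(\cdot, t_1)$ for every $t_1 \in I$ (the reparametrization $\phi$ collapses only the vertical edges $s=0$ and $s=1$) forces $s_1 = s_2$. The first part then yields $\dax(H) = 0$, whence $\Dax^\sm(D_0, D_1) = 0$ in $\Z[\pi_1(M) \setminus 1]^\sigma / d(\pi_3(M))$.

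I expect the main subtlety to be confirming that the specific double point loop appearing in the definition of $g(p)$ represents the same element of $\pi_1(M)$ as the short loop produced by Lemma~\ref{lemma:pi_1-null-approx}; once one observes that $\alpha$ is literally the straight-line short geodesic in $I^3$ between the two preimages of a double point of $\widebar{F}$, the comparison reduces to conjugation by an arc from the basepoint, which does not affect triviality.
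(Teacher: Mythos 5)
Your proof is correct and takes essentially the same approach as the paper: perturb $H$ to a generic track $\widebar F$ close to the topological embedding $\widebar H$, then invoke Lemma~\ref{lemma:pi_1-null-approx} to conclude that all the $g(p)$ vanish. You additionally spell out the conjugation/basepoint comparison between the short-geodesic loop from Lemma~\ref{lemma:pi_1-null-approx} and the specific double point loop in the definition of $g(p)$, as well as the verification that a topological isotopy yields an embedded honest track; both of these details the paper leaves implicit.
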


This is essentially another application of the principle described in Lemma~\ref{lemma:pi_1-null-approx}: an approximation close to a topological embedding has null-homotopic double point loops.

\begin{proof}[Proof of Lemma~\ref{lemma:smooth-dax-top-invariance}]
  Perturb $H$ by a homotopy with sufficiently small diameter to a smooth map $F\colon I^3 \to M$ so that $\widebar{F}\colon I^3 \to M\times I^2$ is a generic track of~$H$.
  Since $F$ is close to $H$, $\widebar F$ is close to the topological embedding $\widebar H$.
  By Lemma~\ref{lemma:pi_1-null-approx}, $g(p)=1$ for every double point $p$ of $\widebar{F}$.
  It follows that $\dax(H)=0$.
\end{proof}

Now, let $D_0$ and $D_1$ be topological disks in a smooth 4-manifold~$M$.
Choose $k\ge 0$ and smooth disks $D_i'$ in $M\# k(S^2\times S^2)$ admitting a topological isotopy $h_i\colon I^2\times I \to M \# k(S^2\times S^2)$ from $D_i$ to $D_i'$, $i=0,1$.
Here, the connected sum with $k(S^2\times S^2)$ is performed at a point $q\in\inte(M) \setminus(D_0\cup D_1)$.
The Stable Smoothing Theorem~\ref{theorem:stable-smoothing} ensures that this is always possible.
For brevity, we write $h_i \colon D_i \approx D_i'$.
Suppose that $D_0$ and $D_1$ are homotopic rel~$\partial$ in~$M$.
We can always perturb a homotopy to avoid~$q$, by topological transversality (see 9.5 and the first two paragraphs of 9.6 of~\cite{Freedman-Quinn:1990-1}).
Let $H\colon I^3 \to M\setminus q$ be a homotopy $D_0\simeq D_1$ rel~$\partial$.
Viewing $M\setminus q$ as a subspace of $M\#k(S^2\times S^2)$, the concatenation $H' = h_0^{-1} \cdot H \cdot h_1$ is a homotopy from $D_0'$ to $D_1'$ rel $\partial$ in $M\# k(S^2\times S^2)$.
We write:
\[
  H'\colon D_0' \smash{\stackrel{h_0^{-1}}{\approx}} D_0 \stackrel{\smash{H}}{\simeq} D_1 \stackrel{h_1}{\approx} D_1'.
\]
Since $H'$ is between \emph{smooth} disks, $\dax(H')$ is defined in $M\# k(S^2\times S^2)$ by Definition~\ref{definition:smooth-dax-for-homotopy}, as an element in $\Z[\pi_1(M \# k(S^2\times S^2)) \setminus 1]^\sigma=\Z[\pi_1(M) \setminus 1]^\sigma$.

\begin{definition}[Dax invariants for topological disks in a smooth 4-manifold]
  \label{definition:dax-top}
  Define
  \[
    \Dax^\top(D_0, D_1) := \Dax(H') \in \Z[\pi_1(M) \setminus 1]^\sigma / d_{\zeta_+}(\pi_3(M))
  \]
  where $\zeta_+$ is the upper semicircle of $\partial D_0 = \partial D_1$ and $d_{\zeta_+}\colon \pi_3(M) \to \Z[\pi_1(M) \setminus 1]^\sigma$ is the Dax homomorphism in Definition~\ref{definition:dax-homomorphism-pi_3}.
\end{definition}

Note that we use $d_{\zeta_+}(\pi_3(M))$ in Definition~\ref{definition:dax-top}, instead of $d_{\zeta_+}(\pi_3(M\# k(S^2\times S^2)))$, even though $D_i'$ and $H'$ are not in~$M$.
Despite this, the following hold.

\begin{theorem}
  \label{theorem:dax-top-well-defined}
  The value of $\Dax^\top(D_0, D_1)$ is well-defined, independent of the choice of smoothings $D_0'$ and $D_1'$ and the choice of a homotopy $H$ from $D_0$ to~$D_1$.
\end{theorem}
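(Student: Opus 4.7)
The plan is to verify invariance of $\dax(H')$ under each of the choices in turn, using two recurring tools: $\dax$ vanishes on topological isotopies (a consequence of Lemma~\ref{lemma:smooth-dax-top-invariance} and Lemma~\ref{lemma:pi_1-null-approx}), and $\dax$ is additive under concatenation of homotopies satisfying~\ref{item:dax-boundary-condition} and is invariant under rel-$\partial I^3$ homotopy.

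The crucial step is to show independence under change of the homotopy $H$, for fixed smoothings $D_i'$ and isotopies $h_i$. For two homotopies $H_1, H_2 \colon D_0 \simeq D_1$ in $M \setminus q$, additivity and inversion give $\dax(H'_1) - \dax(H'_2) = \dax(H'_1 \cdot (H'_2)^{-1})$. A rel-boundary homotopy cancels the internal $h_1 \cdot h_1^{-1}$, reducing the loop to $h_0^{-1} \cdot (H_1 \cdot H_2^{-1}) \cdot h_0$. After inserting small rel-boundary homotopies connecting $D_0$ to a smooth approximation $\bar D_0 \colon I^2 \to M \setminus q$ so that each piece of the concatenation satisfies~\ref{item:dax-boundary-condition}, additivity combined with the vanishing of $\dax$ on topological isotopies reduces the calculation to $\dax(F_1 \cdot F_2^{-1})$ for smooth perturbations $F_i$ of $H_i$ with values in $M \setminus q$. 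Since $F_1 \cdot F_2^{-1}$ takes values in $M \setminus q$, the dax computed in $M \# k(S^2 \times S^2)$ agrees with that computed in $M$, which equals $d^M_{\zeta_+}(\alpha)$ for the class $\alpha \in \pi_3(M)$ obtained by pushing $[F_1 \cdot F_2^{-1}] \in \pi_3(M \setminus q)$ through the surjection $\pi_3(M \setminus q) \twoheadrightarrow \pi_3(M)$. Hence the difference lies in $d_{\zeta_+}(\pi_3(M))$.

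The other independence statements follow from the same additivity-with-vanishing template and give exact equalities. Changing only the isotopies with smoothings fixed reduces both dax values to $\dax(F)$ for a smooth perturbation $F$ of $H$ with common smooth boundary data $\bar D_0, \bar D_1$. Changing smoothings in the same $M \# k(S^2 \times S^2)$ amounts to inserting a topological isotopy $D_0' \approx D_0''$ (namely $\tilde h_0 \cdot h_0^{-1}$) into the concatenation, which contributes $0$ to $\dax$. Finally, different stabilization indices are compared by taking further $(S^2 \times S^2)$-summands disjoint from all the data, which does not alter dax.

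The main technical obstacle is the boundary condition~\ref{item:dax-boundary-condition}, which requires smoothness on all of $\partial I^3$ and therefore fails at the internal interfaces of the concatenation $h_0^{-1} \cdot H \cdot h_1$, where the topological disks $D_0, D_1$ appear. Overcoming this requires the systematic use of smooth approximations $\bar D_0, \bar D_1$ in $M \setminus q$ as intermediate boundary data: the small homotopies implementing these approximations preserve $\dax$ by homotopy invariance, and by Lemma~\ref{lemma:pi_1-null-approx} the perturbed topological-isotopy pieces admit smooth generic representatives whose double point loops are all null-homotopic and so contribute nothing to $\dax$. This ensures that the final outcome is controlled purely by the $\pi_3(M)$-class of the homotopy, as needed.
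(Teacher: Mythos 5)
Your overall strategy is the right one and, for the independence of the smoothings and isotopies, essentially coincides with the paper's: concatenate, use additivity and inversion, and kill the extra terms because $\dax$ vanishes on topological isotopies between smooth disks. The stabilization-index comparison is also handled the same way. However, your treatment of the crucial step --- independence of the homotopy $H$ --- has a genuine gap. After reducing to $\dax\bigl(h_0^{-1}(H_1\cdot H_2^{-1})h_0\bigr)$, you assert that this equals $d_{\zeta_+}(\alpha)$ for ``the class $\alpha$ obtained by pushing $[F_1\cdot F_2^{-1}]\in\pi_3(M\setminus q)$'' forward to $\pi_3(M)$. But $F_1\cdot F_2^{-1}$ is a self-homotopy of a disk rel $\partial$: on the faces $u=0,1$ of $I^3$ it equals $\bar D_0$, whereas the domain of $d_{\zeta_+}$ consists of kernel maps, which are constant equal to $\zeta$ on all of $I\times\partial I^2$. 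These boundary conditions are genuinely different, so $F_1\cdot F_2^{-1}$ does not represent an element of $\pi_3$ as it stands, and even after choosing an identification one must check that $\dax$ is preserved by it. This is exactly where the paper invests its effort: the reparametrization $\psi$ of $I^2$ (following Kosanovi\'c--Teichner) collapses the boundary so that $G^\psi$ becomes, after a rel-$\partial I^3$ homotopy eliminating the $h_0^{\pm 1}$ triangles, a kernel map $G'$ mapping into $M$; and $\dax(G)=\dax(G^\psi)$ because $\psi$ fixes the $s$-coordinate and is an orientation-preserving diffeomorphism on the interior. Without this (or an equivalent argument), the assertion that the difference lies in $d_{\zeta_+}(\pi_3(M))$ is unjustified, and this is the whole point of the theorem, since the naive application of the smooth well-definedness would only give the result modulo the a priori larger subgroup $d\bigl(\pi_3(M\#k(S^2\times S^2))\bigr)$.

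A secondary, fixable issue: your insertion of smooth approximations $\bar D_0,\bar D_1$ as intermediate boundary data is not needed in the paper's argument (which only ever applies additivity at the smooth interfaces $D_0',D_1'$ and treats $D_0,D_1$ as interior slices of a single map satisfying \ref{item:dax-boundary-condition}), and it creates extra obligations you do not discharge. You would need $\bar D_i$ to have embedded leaves $\alpha_{t,u}$ so that condition \ref{item:dax-boundary-condition} holds at the new interfaces, and you would need a strengthening of Lemma~\ref{lemma:smooth-dax-top-invariance} asserting that $\dax$ vanishes on maps whose honest track is merely \emph{close to} a topological embedding (the piece $h_0^{-1}\cdot(\text{small homotopy }D_0\simeq\bar D_0)$ is not itself an isotopy). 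Both points can be handled via Lemma~\ref{lemma:pi_1-null-approx} and genericity, but as written they are gaps, and the detour is avoidable.
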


\begin{proof}
  We first verify the independence of the choice of smoothings~$D_i'$ and isotopies $h_i\colon D_i \approx D_i'$ ($i=0,1$).
  Fix a homotopy $H\colon D_0 \simeq D_1$ rel $\partial$ in $M\setminus q$.
  Write $H':=h_0^{-1}\cdot H\cdot h_1$ as before.
  Let $D_i''$ be another smooth disk in $M\# k' (S^2\times S^2)$ admitting a topological isotopy $g_i\colon D_i \approx D_i''$, $i=0,1$.
  Let
  \[
    H'' = g_0^{-1}\cdot H \cdot g_1 \colon D_0''\approx D_0 \simeq D_1 \approx D_1''
  \]
  be the concatenation.
  We will show that $\dax(H')=\dax(H'')$ in $\Z[\pi_1(M)\setminus 1]^\sigma$.
  Since $\dax(-)$ is unchanged by stabilization of the ambient manifold, we may assume that $k=k'$.
  The homotopy $h_i^{\vphantom{1}} \cdot h_i^{-1} \colon D_i \approx D_i' \approx D_i$ is homotopic (rel $\partial I^3$) to the constant homotopy of~$D_i$.
  So $H''$ is homotopic (rel $\partial I^3$) to $g_0^{-1} (h_0 \cdot h_0^{-1}) H (h_1 h_1^{-1}) g_1 = (g_0^{-1} h_0) H' (h_1^{-1} g_1)\colon$
  \[
    D_0'' \stackrel{g_0^{-1}}{\approx} D_0 \stackrel{h_0}{\approx} D_0' \stackrel{h_0^{-1}}{\approx} D_0 \stackrel{H}{\simeq} D_1 \stackrel{h_1}{\approx} D_1' \stackrel{h_1^{-1}}{\approx} D_1 \stackrel{g_1}{\approx} D_1'',
  \]
  Each of $H'$, $H''$, $g_0^{-1}h_0$ and $h_1^{-1}g_1$ is a homotopy between smooth disks.
  Thus, by the homotopy invariance and additivity of $\dax(-)$ (see Section~\ref{subsection:preliminary-smooth-dax}), we have
  \[
    \dax(H'') = \dax( (g_0^{-1} h_0) H' (h_1^{-1} g_1) ) = \dax(g_0^{-1} h_0) + \dax(H') + \dax(h_1^{-1} g_1).
  \]
  Since $g_0^{-1}h_0$ and $h_1^{-1}g_1$ are topological isotopies, we have $\dax(g_0^{-1}h_0) = \dax(h_1^{-1}g_1) = 0$ by Lemma~\ref{lemma:smooth-dax-top-invariance}.
  It follows that $\dax(H') = \dax(H'')$, as promised.

  Now, fix topological isotopies $h_i\colon D_i \approx D_i'$, and let $H$, $K \colon D_0 \simeq D_1$ rel $\partial I^3$ be homotopies in~$M\setminus q$.
  We will show that $\dax(h_0^{-1}H h_1) = \dax(h_0^{-1}K h_1)$ modulo $d(\pi_3(M))$.
  By the additivity, inversion and homotopy invariance, we have
  \[
    \dax(h_0^{-1}H h_1) - \dax(h_0^{-1}K h_1) = \dax(h_0^{-1}H h_1 \cdot h_1^{-1}K^{-1} h_0) = \dax(h_0^{-1}HK^{-1}h_0).
  \]
  Thus, it suffices to show that $\dax(h_0^{-1}HK^{-1}h_0)$ lies in $d(\pi_3(M))$.
  
  To prove this, we will use a reparametrization, following~\cite[Section~4.1, Equation~(4.5)]{Kosanovic-Teichner:2024-2}.
  Let $\psi\colon I^2\to I^2$ be a quotient map which collapses $0\times I$ to a point, identifies $(t,0)$ with $(t,1)$ ($0\le t\le 1$) and restricts to an orientation preserving diffeomorphism of $
  \inte(I^2)$ onto its image, as illustrated below:
  \[
    \begin{tikzpicture}[
      x=1.6cm, y=1.6cm,
      line width=.6pt,
      edge/.style={-stealth,shorten <=2.5*\r,shorten >=2.5*\r},
    ]
      \small
      \def\r{1pt}
      \fill (0,0) circle(\r) (1,0) circle(\r) (1,1) circle(\r) (0,1) circle(\r);
      \draw [edge] (0,0) -- (1,0) node[midway,below]{$b$};
      \draw [edge] (1,0) -- (1,1) node[midway,right]{$a$};
      \draw [edge] (0,1) -- (1,1) node[midway,above]{$b$};
      \draw [edge,-] (0,0) -- (0,1) node[midway,left]{$*$};
      \draw [-stealth] (1.5,.5) -- ++(.5,0) node[midway,above]{$\psi$};
      \begin{scope}[shift={(2.5,0)}]
        \draw [edge] (0,.5) -- (0,0) -- (1,0)
          -- (1,1) node[pos=.5,right]{$a$} -- (0,1) -- (0,.5);
        \fill (.5,.5) circle(\r) node[below]{$*$} (0,.5) circle(\r);
        \draw [edge] (.5,.5) -- (0,.5) node[midway,above]{$b$};
      \end{scope} 
    \end{tikzpicture}
  \]
  For a self homotopy $G\colon I^3\to M$ of a disk rel $\partial$, we can reparametrize $G$ by $\psi$, to obtain a map $G^\psi \colon I^3 \to M$ given by $G^\psi(s,\psi(t,u)) = G(s,t,u)$.
  Since the first coordinate $s$ is unchanged, double points counted to define $\dax(G)$ are in 1--1 correspondence with those used to define $\dax(G^\psi)$.
  The corresponding $\pi_1$ elements are identical since $\psi|_{\inte(I^2)}$ is homotopic to the identity.
  It follows that $\dax(G)=\dax(G^\psi)$ in $\Z[\pi_1(M)\setminus 1]^\sigma$.

  Define the adjoint $G^{ad}\colon I^2 \to \Map(I, M)$ by $G^{ad}(t,u)(s) = G(s,t,u)$.
  Then we have $(G^\psi)^{ad} \circ \psi = G^{ad}$.
  For the case of $G:=h_0^{-1}HK^{-1}h_0$, $G^{ad}$ and $(G^\psi)^{ad}$ are as follows:
  \[
    G^{ad} = \vcenter{\hbox{
    \begin{tikzpicture}[
      x=.8cm, y=.8cm,
      line width=.6pt,
      edge/.style={-stealth,shorten <=2.5*\r,shorten >=2.5*\r},
    ]
      \small
      \def\r{1pt}
      \foreach \i in {0,1,2,3,4} { \fill (0,\i) circle(\r) (4,\i) circle(\r); }
      \foreach \i in {0,1,2,3} { \draw [edge,-] (0,\i) -- ++(0,1) ; \draw [edge,-] (4,\i) -- ++(0,1); }
      \foreach \i/\l in {0/D_0', 1/D_0, 2/D_1, 3/D_0, 4/D_0'} {
        \tiny \draw [edge] (0,\i) -- (4,\i) node[pos=.33,fill=white]{$\l$};
      }
      \draw (2.5,0.5) node{$h_0^{-1}$} (2.5,1.5) node{$H$} (2.5,2.5) node{$K^{-1}$} (2.5,3.5) node{$h_0$};
      \tiny \draw (0,2) node[left]{$\zeta_-$} (4,2) node[right]{$\zeta_+$};
    \end{tikzpicture}
    }}
    , \quad
    (G^\psi)^{ad} = \vcenter{\hbox{
    \begin{tikzpicture}[
      x=.8cm, y=.8cm,
      line width=.6pt,
      edge/.style={-stealth,shorten <=2.5*\r,shorten >=2.5*\r},
    ]
      \small
      \def\r{1pt}
      \fill (0,0) circle(\r) (4,0) circle(\r) (0,2) circle(\r) (4,2) circle(\r)
        (0,4) circle(\r) (4,4) circle(\r) (2,2) circle(\r);
      \tiny
      \draw [edge,-] (0,0)--(4,0) node[midway,below]{$\zeta_+$};
      \draw [edge,-] (0,4)--(4,4) node[midway,above]{$\zeta_+$};
      \draw [edge,-] (0,0)--(0,2) node[midway,left]{$\zeta_+$};
      \draw [edge,-] (0,2)--(0,4) node[midway,left]{$\zeta_+$};
      \draw [edge,-] (4,0)--(4,2) node[midway,right]{$\zeta_+$};
      \draw [edge,-] (4,2)--(4,4) node[midway,right]{$\zeta_+$};
      \draw [edge] (2,2)--(4,2) node[midway,fill=white]{$D_1$};
      \draw [edge] (2,2)--(0,4) node[midway,fill=white]{$D_0$};
      \draw [edge] (2,2)--(0,2) node[midway,fill=white]{$D_0'$};
      \draw [edge] (2,2)--(0,0) node[midway,fill=white]{$D_0$};
      \draw (2,2) node[shift={(1ex,-2ex)}]{$\zeta_-$};
      \small
      \draw (.5,2.5) node{$h_0^{-1}$} (.5,1.5) node{$h_0$} (3,3) node{$K^{-1}$} (3,1) node{$H$};
    \end{tikzpicture}
    }}
  \] 
  In the diagram of $(G^\psi)^{ad}$, the two triangles representing $h_0^{-1}$ and $h_0$ are eliminated by homotopy rel~$\partial$.
  It follows that $G^\psi$ is homotopic (rel~$\partial I^3$) to another homotopy $G'$ which is the concatenation of reparametrizations of $H$ and~$K^{-1}$.
  Since $H$ and $K$ are homotopies in $M$, so is $G'$, i.e.\ $G'\colon I^3 \to M$.
  Since $(G^\psi)^{ad}$ has the constant value $\zeta_+$ on $\partial I^2$, so does $(G')^{ad}$.
  Therefore $G'$ represent an element of $\pi_3(M)$ in the sense of Section~\ref{subsubsection:dax-homomorphism-pi_3}.
  It follows that
  \[
    \dax(h_0^{-1}H K^{-1} h_0) = \dax(G) = \dax(G^\psi) = \dax(G') \in d(\pi_3(M)).
    \qedhere
  \]
\end{proof}

\begin{remark}[Dax for disks with geometric dual]
  Suppose that two topological disks $D_0$ and $D_1$ are homotopic rel~$\partial$ in a smooth 4-manifold $M$ and suppose that each $D_i$ has a smooth geometric dual in~$M$.
  Especially, it is the case if a meridian of the common boundary $K=\partial D_0=\partial D_1$ is trivial in $\pi_1(\partial M\setminus K)$, by Lemma~\ref{lemma:dual-in-3-manifold}.
  Then, we do not need to stabilize $M$ to define $\Dax^\top(D_0,D_1)$, since $D_i$ has a smoothing $D_i'$ in $M$ by the Light Bulb Smoothing Theorem~\ref{theorem:light-bulb-smoothing}\@.
\end{remark}

For later use, we state basic properties of $\Dax^\top$ below.

\begin{theorem}
  \label{theorem:dax-top-properties}
  Let $D_0$, $D_1$ and $D_2$ are topological disks in a smooth 4-manifold $M$ which are mutually homotopic rel~$\partial$.
  Then the following hold.
  \begin{enumerate}
    \item\label{item:dax-top-additivity} $\Dax^\top(D_0,D_2) = \Dax^\top(D_0,D_1) + \Dax^\top(D_1,D_2)$.
    \item\label{item:dax-top-inverse} $\Dax^\top(D_0,D_1) = -\Dax^\top(D_1,D_0)$
    \item\label{item:dax-top-smooth} If the disks $D_0$ and $D_1$ are smooth, $\Dax^\top(D_0,D_1) = \Dax^\sm(D_0,D_1)$.
  \end{enumerate}
\end{theorem}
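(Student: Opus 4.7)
My plan is to reduce all three properties to the corresponding properties of the smooth Dax invariant $\dax$ recalled in Section~\ref{subsection:preliminary-smooth-dax} (additivity, inversion, and homotopy invariance), using the well-definedness result already established in Theorem~\ref{theorem:dax-top-well-defined}. Since Theorem~\ref{theorem:dax-top-well-defined} lets me choose any convenient smoothings $D_i'$ and homotopies $H$, the strategy is simply to pick compatible data so that the smooth formulas descend cleanly.

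For additivity~\ref{item:dax-top-additivity}, I would fix topological isotopies $h_i\colon D_i\approx D_i'$ in $M\#k(S^2\times S^2)$ (for a single large $k$ serving all three disks, which is possible by Theorem~\ref{theorem:stable-smoothing} performed away from a base point~$q$), and then choose homotopies $H\colon D_0\simeq D_1$ and $K\colon D_1\simeq D_2$ rel~$\partial$ in $M\setminus q$. By definition,
\[
  \Dax^\top(D_0,D_1) = \dax(h_0^{-1}\cdot H\cdot h_1),\quad \Dax^\top(D_1,D_2) = \dax(h_1^{-1}\cdot K\cdot h_2),
\]
and for the sum I would use the homotopy $H\cdot K\colon D_0\simeq D_2$, obtaining $\Dax^\top(D_0,D_2) = \dax(h_0^{-1}\cdot H\cdot K\cdot h_2)$. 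The key identity is that $h_0^{-1}\cdot H\cdot K\cdot h_2$ is homotopic rel~$\partial I^3$ to $(h_0^{-1}\cdot H\cdot h_1)\cdot (h_1^{-1}\cdot K\cdot h_2)$, since $h_1\cdot h_1^{-1}$ is homotopic rel~$\partial$ to the constant isotopy of~$D_1$. Then the additivity and homotopy invariance of the smooth $\dax(-)$ applied in $M\#k(S^2\times S^2)$ give the result, where I pass to the quotient modulo $d(\pi_3(M))$ using the same $\zeta_+$ throughout.

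For inversion~\ref{item:dax-top-inverse}, the same setup with the reversed homotopy $H^{-1}\colon D_1\simeq D_0$ and the concatenation $h_1^{-1}\cdot H^{-1}\cdot h_0$ reduces to the smooth inversion property $\dax(G^{-1})=-\dax(G)$ applied to $G=h_0^{-1}\cdot H\cdot h_1$, modulo an obvious reparametrization between $(h_0^{-1}\cdot H\cdot h_1)^{-1}$ and $h_1^{-1}\cdot H^{-1}\cdot h_0$.

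For the smooth compatibility~\ref{item:dax-top-smooth}, if $D_0$ and $D_1$ are already smooth I would simply take $k=0$ and $D_i'=D_i$ with $h_i$ the constant isotopy. Then $H' = h_0^{-1}\cdot H\cdot h_1$ is homotopic rel~$\partial I^3$ to $H$ itself, so $\Dax^\top(D_0,D_1)=\dax(H)=\Dax^\sm(D_0,D_1)$ directly from Definition~\ref{definition:smooth-dax-disks}. I do not expect any real obstacle here; the only subtle point is to confirm that the class of $d_{\zeta_+}(\pi_3(M))$ used in the quotients agrees on both sides, but this is immediate since the same $\zeta_+$ is used in both definitions and $M\# 0(S^2\times S^2) = M$. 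The entire proof is therefore a formal bookkeeping argument reducing to the properties of $\dax$ already cited.
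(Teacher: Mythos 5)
Your proposal is correct and takes essentially the same approach as the paper: the paper's proof is a one-line observation that (1) and (2) follow from the additivity and inversion formulas for $\dax$ recalled in Section~\ref{subsection:preliminary-smooth-dax}, and that (3) is immediate from Definition~\ref{definition:dax-top}. You have simply spelled out the bookkeeping (choosing compatible smoothings via a single stabilization, cancelling $h_1\cdot h_1^{-1}$, and taking $k=0$ in the smooth case), which is exactly the implicit content of the paper's terse argument.
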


\begin{proof}
  \ref{item:dax-top-additivity}~and \ref{item:dax-top-inverse} follow readily from the additivity and inversion formulas for $\Dax^\sm$ in Section~\ref{subsection:preliminary-smooth-dax}.
  \ref{item:dax-top-smooth}~is obvious from Definition~\ref{definition:dax-top}.
\end{proof}

\begin{theorem}
  \label{theorem:dax-top-isotopy}
  If two topological disks $D_0$ and $D_1$ in a smooth 4-manifold $M$ are topologically isotopic, then $\Dax^\top(D_0,D_1)=0$.
  More generally, $\Dax^\top(D_0,D_1)$ is invariant under topological isotopy of $D_0$ and~$D_1$.
\end{theorem}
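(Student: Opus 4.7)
The plan is to establish the first claim directly via Lemma~\ref{lemma:smooth-dax-top-invariance}, and then deduce the general invariance statement from additivity. For the second claim, given topological isotopies $D_0 \approx \widetilde D_0$ and $D_1 \approx \widetilde D_1$, additivity (Theorem~\ref{theorem:dax-top-properties}~\ref{item:dax-top-additivity}) gives
\[
  \Dax^\top(\widetilde D_0, \widetilde D_1) = \Dax^\top(\widetilde D_0, D_0) + \Dax^\top(D_0, D_1) + \Dax^\top(D_1, \widetilde D_1),
\]
where the two outer terms vanish by the first claim, yielding the desired invariance. It therefore remains to prove: if $J\colon D_0 \approx D_1$ is a topological isotopy rel~$\partial$ in $M$, then $\Dax^\top(D_0, D_1) = 0$.

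For this, the idea is to use $J$ itself as the homotopy appearing in Definition~\ref{definition:dax-top}. Apply the Stable Smoothing Theorem~\ref{theorem:stable-smoothing} to fix smoothings $D_i' \subset M\#k(S^2\times S^2)$ of $D_i$ together with topological isotopies $h_i\colon D_i \approx D_i'$, and choose the connect-sum point $q \in \inte(M)$ to lie away from $D_0 \cup D_1$ and the (3-dimensional) image of $J$, so that $J$ takes values in $M \setminus q \subset M\#k(S^2\times S^2)$. The concatenation
\[
  H' := h_0^{-1} \cdot J \cdot h_1\colon\; D_0' \approx D_0 \approx D_1 \approx D_1'
\]
is then a topological isotopy between the smooth disks $D_0'$ and $D_1'$ in $M\#k(S^2\times S^2)$, and by Theorem~\ref{theorem:dax-top-well-defined} it computes $\Dax^\top(D_0, D_1)$ as the image of $\dax(H')$ in $\Z[\pi_1(M)\setminus 1]^\sigma / d(\pi_3(M))$. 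So it suffices to show $\dax(H')=0$ by applying Lemma~\ref{lemma:smooth-dax-top-invariance} to $H'$.

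The main technical point is therefore to verify the hypothesis of that lemma, namely that the honest track $\widebar{H'}\colon I^3 \to (M\#k(S^2\times S^2))\times I^2$ is a topological embedding. Write $H'(s,t,u) = f_u(\phi(s,t))$, where $f_u\colon D^2 \hookrightarrow M\#k(S^2\times S^2)$ is the topological embedding at time $u$ obtained from the concatenation and $\phi\colon I^2 \to D^2$ is the reparametrization in~\eqref{equation:disk-reparametrization-phi}. Injectivity of $\widebar{H'}$ reduces to injectivity of $\phi(\cdot,t)\colon I\to D^2$ for each fixed $t\in I$, which holds because $\phi(\cdot,0)$ and $\phi(\cdot,1)$ injectively parametrize the lower and upper semicircles of $\partial D^2$, while for $t\in(0,1)$ the map $\phi(\cdot,t)$ is a chord joining $(-1,0)$ and $(1,0)$ through the interior of $D^2$. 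The only failures of injectivity of $\phi$ itself on $I^2$ (the collapses $\phi(0,\cdot)=(-1,0)$ and $\phi(1,\cdot)=(1,0)$) carry distinct $t$-coordinates in $\widebar{H'}$ and are thus separated there. The boundary condition~\ref{item:dax-boundary-condition} is immediate from the fact that $J$ and the $h_i$ are all rel~$\partial$. Hence Lemma~\ref{lemma:smooth-dax-top-invariance} yields $\dax(H')=0$, completing the proof.
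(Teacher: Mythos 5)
Your overall structure follows the paper's proof: reduce the general invariance statement to the case of a single topological isotopy via additivity, then apply Lemma~\ref{lemma:smooth-dax-top-invariance} to the concatenation $h_0^{-1}\cdot J\cdot h_1$. However, there is a genuine gap in the step where you "choose the connect-sum point $q\in\inte(M)$ to lie away from $D_0\cup D_1$ and the (3-dimensional) image of $J$."

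The parenthetical "(3-dimensional)" is doing a lot of unjustified work. The set $\mathrm{Im}(J)$ is the image of a \emph{continuous} map $D^2\times I\to M$, and continuous maps can raise topological dimension (classical Peano-type phenomena); the fact that each time-slice $J_u$ is a locally flat embedding does not by itself force $\mathrm{Im}(J)$ to have empty interior. In particular, if $M$ is compact there is no elementary reason why $\mathrm{Im}(J)$ could not exhaust $\inte(M)$, in which case no admissible $q$ exists and your argument breaks down. (The cleaner fact you do have --- that the honest track $\widebar{J}$ embeds $D^2\times I$ into $M\times I$, so $\mathrm{Im}(\widebar J)$ is genuinely $3$-dimensional --- does not help either, since the projection $M\times I\to M$ need not preserve "having empty interior.") The paper avoids this entirely by proving Lemma~\ref{lemma:isotopy-avoiding-points}: it does \emph{not} claim the given isotopy misses $q$, but instead constructs a \emph{new} isotopy from $D_0$ to $D_1$ that does, by pushing the interior part of the isotopy off a boundary collar, choosing $q'$ in the collar, and then transporting $q'$ to $q$ by an ambient homeomorphism fixing $f_0(R)\cup f_1(R)$. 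You should invoke that lemma (or reproduce an argument of that kind) rather than assert that $J$ itself can be arranged to miss a point.

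The remaining ingredients of your proof are fine: the reduction via additivity, the use of Theorem~\ref{theorem:dax-top-well-defined} to justify computing with $H'=h_0^{-1}\cdot J\cdot h_1$, and the verification that $\widebar{H'}$ is injective (the $t$-coordinate in the track separates the preimages collapsed by $\phi$, and a continuous injection from a compact space into a Hausdorff space is automatically a topological embedding) all match what the paper needs. Once the avoidance-of-$q$ step is patched by Lemma~\ref{lemma:isotopy-avoiding-points}, the argument is the paper's argument.
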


\begin{proof}
  Let $h_i$ be a topological isotopy from $D_i$ to a smoothing $D_i'$ in $M\#k(S^2\times S^2)$ as in Definition~\ref{definition:dax-top}.
  If $D_0$ and $D_1$ are topologically isotopic in $M\setminus q$, then for a topological isotopy $H\colon I^3\to M\setminus q$, $H'=h_0^{-1}\cdot H \cdot h_1$ is a topological isotopy from $D_0'$ to $D_1'$ in $M\#k(S^2\times S^2)$.
  It follows that $\Dax^{\smash{\top}}(D_0,D_1) = \Dax(H')=0$ by Lemma~\ref{lemma:smooth-dax-top-invariance}.
  From this and Lemma~\ref{lemma:isotopy-avoiding-points} below, the first sentence of Theorem~\ref{theorem:dax-top-isotopy} readily follows.
  The second sentence follows from Theorem~\ref{theorem:dax-top-properties}~\ref{item:dax-top-additivity} and the first sentence.
\end{proof}

\begin{lemma}
  \label{lemma:isotopy-avoiding-points}
  Suppose that $f_0$, $f_1\colon R\hookrightarrow M$ are embeddings of a surface $R$ with nonempty boundary into a topological 4-manifolds~$M$, and $q \in \inte(M)\setminus (f_0(R)\cup f_1(R))$.
  If $f_0$ and $f_1$ are isotopic (rel~$\partial$) in $M$, then $f_0$ and $f_1$ are isotopic in $M\setminus q$.
\end{lemma}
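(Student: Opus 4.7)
The plan is to upgrade the given isotopy to an ambient isotopy of $M$ and then perturb it using the codimension-$2$ position of $f_0(R)$ in~$M$. First, by the topological isotopy extension theorem (see \cite[\S1.1]{Freedman-Quinn:1990-1}), I promote the given isotopy $\{f_t\}$ to a compactly supported ambient isotopy $\{\Phi_t\colon M\to M\}_{t\in I}$ with $\Phi_0=\id$ and $\Phi_t\circ f_0=f_t$, fixing a neighborhood of $\partial M$. I then consider the path $\gamma\colon I\to M$ defined by $\gamma(t):=\Phi_t^{-1}(q)$. The key observation is that $\gamma(t)\in f_0(R)$ if and only if $q\in f_t(R)$; in particular, both endpoints $\gamma(0)=q$ and $\gamma(1)=\Phi_1^{-1}(q)$ lie in $M\setminus f_0(R)$, and the goal is to modify the isotopy so that $\gamma$ avoids $f_0(R)$ entirely.

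Since $f_0(R)$ is a locally flat $2$-submanifold of the $4$-manifold $M$ and $\dim\gamma+\dim f_0(R)=1+2<4$, topological transversality (\cite[\S9.5]{Freedman-Quinn:1990-1}) lets me homotope $\gamma$ rel endpoints to a path $\gamma'$ disjoint from $f_0(R)$. Fix a homotopy $H\colon I\times I\to M$ with $H(-,0)=\gamma$, $H(-,1)=\gamma'$, $H(0,-)\equiv q$, and $H(1,-)\equiv\Phi_1^{-1}(q)$.

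Using $H$, I would construct a continuous family $\{\sigma_t\colon M\to M\}_{t\in I}$ of self-homeomorphisms with $\sigma_0=\sigma_1=\id$ and $\sigma_t(\gamma'(t))=\gamma(t)$ for each $t$, supported in an arbitrarily small neighborhood of $H(I\times I)$. This is the standard parametric point-pushing construction: for each $t$, the reversed path $s\mapsto H(t,1-s)$ yields an ambient isotopy in a tubular neighborhood dragging $\gamma'(t)$ to $\gamma(t)$, and these fit into a continuous family in~$t$ by parametric topological isotopy extension. Setting $\Phi'_t:=\Phi_t\circ\sigma_t$ then yields an ambient isotopy with $\Phi'_0=\id$, $\Phi'_1=\Phi_1$, and $(\Phi'_t)^{-1}(q)=\sigma_t^{-1}(\gamma(t))=\gamma'(t)\notin f_0(R)$. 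Hence $q\notin\Phi'_t(f_0(R))$ for all~$t$, and the family $\{f'_t:=\Phi'_t\circ f_0\}$ is the desired isotopy from $f_0$ to $f_1$ inside $M\setminus q$.

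The main obstacle is the parametric construction of $\{\sigma_t\}$ in the purely topological category, which must be continuous in~$t$ and supported in a prescribed neighborhood of $H(I\times I)$; this is a standard but technically delicate application of parametric topological isotopy extension. As an alternative, one can try to apply topological transversality directly to the map $F\colon R\times I\to M$, $F(x,t)=f_t(x)$, using $\dim(R\times I)=3<4=\dim M$ to perturb $F$ off $q$, but then one must verify that a sufficiently small perturbation preserves the property of each slice $F_t$ being a locally flat embedding.
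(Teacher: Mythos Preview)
Your approach is correct but takes a genuinely different route from the paper. The paper exploits the hypothesis $\partial R\neq\emptyset$ much more directly and avoids the parametric construction entirely. After arranging (via collars) that $f_t$ is constant on a collar $\partial R\times[0,1]$ and maps $R\setminus(\partial R\times[0,1))$ into $M\setminus(\partial M\times[0,1])$, the paper picks a point $q'\in(\partial M\times\tfrac12)\setminus f_0(R)$; this $q'$ is then automatically missed by $f_t(R)$ for \emph{every} $t$. A single homeomorphism $h\colon M\to M$, obtained by pushing $q'$ to $q$ along an arc in the connected set $M\setminus(f_0(R)\cup f_1(R))$, fixes $f_0(R)\cup f_1(R)$ pointwise, and $g_t:=h\circ f_t$ is the desired isotopy in $M\setminus q$.

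So the paper replaces your ambient-isotopy-plus-parametric-point-pushing argument with one non-parametric application of isotopy covering, sidestepping exactly the technical obstacle you identify. The trade-off is that the paper's trick genuinely needs $\partial M\neq\emptyset$ to manufacture the safe point $q'$, whereas your argument never uses the boundary and would work equally well for closed surfaces or for isotopies not rel~$\partial$; you buy generality at the cost of the heavier parametric machinery.
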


\begin{proof}
  Fix collars $\partial M\times[0,1]$ and $\partial R\times[0,1]$ of $\partial M\times 0=\partial M \subset M$ and $\partial R\times 0 = \partial R \subset R$.
  By isotopy, we may assume that $f_0(x,s)=f_1(x,s)=(f_0(x,0),s)\in M\times[0,1]$ for $(x,s)\in \partial R\times[0,1]$.
  By identifying $M\setminus (\partial M\times [0,1))$ with $M$, there is an isotopy $\{f_t\colon R\hookrightarrow M\}$ from $f_0$ to $f_1$ such that $f_t=f_0$ on $\partial R\times[0,1]$ and $f_t(R\setminus (\partial R\times[0,1])) \subset M\setminus (\partial M\times[0,1])$.
  Choose $q'$ in $(\partial M\times \frac12) \setminus f_0(R)$.
  Since $f_0$ and $f_1$ are locally flat embeddings, $M\setminus (f_0(R)\cup f_1(R))$ is connected.
  An isotopy sending $q'$ to $q$ along an arc in $M\setminus (f_0(R)\cup f_1(R))$ induces, by isotopy covering, a homeomorphism $h\colon M\to M$ such that $h(q')=q$ and $h$ fixes $f_0(R) \cup f_1(R)$ pointwise.
  Define an isotopy $\{g_t\colon R \hookrightarrow M\}$ by $g_t = h \circ f_t$.
  We have $g_0=f_0$, $g_1=f_1$.
  Also, $q'\notin f_t(R)$ implies that $q=h(q') \notin h(f_t(R)) = g_t(R)$.
  So, $\{g_t\}$ is an isotopy from $f_0$ to $f_1$ in $M\setminus q$.
\end{proof}

Note that the same argument shows a homotopy analog of Lemma~\ref{lemma:isotopy-avoiding-points}.

When $M$ has a smooth structure, Theorem~\ref{theorem:dax-top} stated in the introduction is a combination of Theorems~\ref{theorem:dax-top-well-defined} and~\ref{theorem:dax-top-isotopy}.

\subsection{Dax invariant for disks in a topological 4-manifold}
\label{subsection:dax-in-top-4-manifold}

Let $M$ be a topological 4-manifold.
Fix an arbitrary almost smooth structure on~$M$, i.e.,\ fix an interior point $p_0$ in $M$ and fix a smooth structure on $M\setminus p_0$.
An almost smooth structure exists by~\cite[Theorem~8.2]{Freedman-Quinn:1990-1}.
Suppose $D_0$ and $D_1$ are disks in~$M$.
Assume that each $D_i$ does not contain~$p_0$, by isotopy of $D_i$ if necessary.
Recall that the Dax invariant of the topological disks $D_0$ and $D_1$ in the smooth 4-manifold $M\setminus p_0$ has been defined in Definition~\ref{definition:dax-top}.
To specify the ambient manifold explicitly, we denote it by
\begin{equation}
  \begin{aligned}
    \Dax^\top_{M\setminus p_0}(D_0,D_1) & \in \Z[\pi_1(M-p_0)\setminus 1]^\sigma / d(\pi_3(M \setminus p_0))
    \\
    & = \Z[\pi_1(M)\setminus 1]^\sigma / d(\pi_3(M \setminus p_0)).
  \end{aligned}
  \label{equation:dax-in-M-p_0}
\end{equation}

We will use the following lemma to express $d(\pi_3(M \setminus p_0))$ in~\eqref{equation:dax-in-M-p_0} in terms of~$M$.

\begin{lemma}
  \label{lemma:dax-homomorphism-on-embedded-3-sphere}\phantomsection
  \begin{enumerate}
    \item Let $M$ be a smooth 4-manifold and $\zeta\colon I\hookrightarrow M$ be a smooth embedding.
    If a class $\alpha\in\pi_3(M)$ is represented by a topologically embedded (locally flat) 3-sphere in $M$, then $\alpha$ lies in the kernel of $d_{\zeta}\colon \pi_3(M)\to \Z[\pi_1(M)\setminus 1]^\sigma$.
    \item Let $M$ be a topological 4-manifold equipped with an almost smooth structure with singular point~$p_0$.
    Then the Dax homomorphism $d \colon \pi_3(M\setminus p_0)\to \Z[\pi_1(M)\setminus 1]^\sigma$ induces a homomorphism $d\colon \pi_3(M)\to \Z[\pi_1(M)\setminus 1]^\sigma$ such that $d(\pi_3(M\setminus p_0)) = d(\pi_3(M))$.
  \end{enumerate}
\end{lemma}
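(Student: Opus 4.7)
My plan for part~(1) is to exhibit a kernel map $H\colon I^3 \to M$ representing $\alpha$ (possibly up to sign) whose honest track $\widebar H$ is a topological embedding. Lemma~\ref{lemma:smooth-dax-top-invariance} will then give $\dax(H) = 0$, and hence $d_\zeta(\alpha) = 0$. Starting from a locally flat embedding $i\colon S^3 \hookrightarrow M$ representing $\alpha$ with image $\Sigma$ and $i(\text{basepoint}) = x_0$, the main step will be to arrange, by isotoping $\Sigma$ in a way that preserves the based class, that $\zeta \subset \Sigma$. I will use that an ambient isotopy of $M$ fixing $x_0$ pointwise preserves $\alpha$: by pushing a chosen point $p \in \Sigma$ to $x_1$ along a prescribed path $\sigma$ in $M\setminus\{x_0\}$, the image of any chosen arc $\beta_0 \subset \Sigma$ from $x_0$ to $p$ becomes an arc on the new $\Sigma$ from $x_0$ to $x_1$ whose class in $\pi_1(M; x_0, x_1)$ is $[\beta_0]\cdot[\sigma]$. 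Since $[\sigma]$ can be chosen freely, this class can be made to equal $[\zeta]$. Then, because $1$-manifolds have codimension $3$ in $M$, two embedded arcs with common endpoints that are homotopic rel~$\partial$ are ambient isotopic rel~$\partial$; thus I can move the new arc onto $\zeta$ by an ambient isotopy fixing $x_0$ and $x_1$ (and thereby $\alpha$), yielding $\zeta \subset \Sigma$.

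Once $\zeta \subset \Sigma$ is arranged, I will parametrize $\Sigma \cong S^3$ as the suspension of $S^2$ with poles $x_0, x_1$ and meridional arcs indexed by $S^2$, chosen so that $\zeta$ is the meridian through a basepoint $* \in S^2$. Composing with a quotient map $\pi\colon I^2 \to S^2$ that collapses $\partial I^2$ to $*$ defines $H\colon I^3 \to \Sigma \subset M$ by sending $(s,t,u)$ to the point at parameter $s$ on the meridian through $\pi(t,u)$. The kernel boundary condition~\ref{item:dax-boundary-condition} holds by construction, each $s$-fiber is an embedded arc on $\Sigma$ so $\widebar H$ is a continuous injection from the compact space $I^3$ into $M\times I^2$ and hence a topological embedding, and collapsing the contractible arc $\zeta$ shows that $H$ induces a degree $\pm 1$ map $S^3 \to \Sigma$ and therefore represents $\pm\alpha$. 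The hardest part will be the geometric adjustment of $\Sigma$ in the first paragraph; the explicit construction of $H$ is then straightforward, and the conclusion $d_\zeta(\alpha) = 0$ follows from Lemma~\ref{lemma:smooth-dax-top-invariance}.

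For part~(2), I will use the long exact sequence of the pair $(M, M\setminus p_0)$ together with excision: $\pi_k(M, M\setminus p_0) \cong \pi_k(\R^4, \R^4\setminus 0)$, and the low-dimensional homotopy of $\R^4\setminus 0 \simeq S^3$ gives $\pi_3(\R^4, \R^4\setminus 0) = 0$ and $\pi_4(\R^4, \R^4\setminus 0) \cong \pi_3(S^3) = \Z$. Consequently the inclusion induces a surjection $\pi_3(M\setminus p_0) \twoheadrightarrow \pi_3(M)$ whose kernel is cyclic, generated by the class of a small locally flat embedded $3$-sphere $\Sigma_0 \subset M\setminus p_0$ bounding a standard coordinate $4$-ball neighborhood of $p_0$. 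Part~(1), applied inside the smooth $4$-manifold $M\setminus p_0$, gives $d([\Sigma_0]) = 0$, so $d$ vanishes on the entire kernel and descends to the required homomorphism $d\colon \pi_3(M) \to \Z[\pi_1(M)\setminus 1]^\sigma$. The surjectivity of $\pi_3(M\setminus p_0) \twoheadrightarrow \pi_3(M)$ then immediately yields $d(\pi_3(M\setminus p_0)) = d(\pi_3(M))$.
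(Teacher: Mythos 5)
Your part~(1) follows essentially the same route as the paper: isotope the embedded sphere so that $\zeta$ becomes an arc in its image, homotope to a kernel map $H$ whose honest track $\widebar H$ is a topological embedding, and invoke Lemma~\ref{lemma:smooth-dax-top-invariance}. The paper achieves this by homotoping $f$ to agree with $(s,t,u)\mapsto\zeta(s)$ on the lower hemisphere $I^3_-$ and taking $H = g|_{I^3_+}$, while you build $H$ explicitly from a suspension parametrization and a quotient $I^2\to S^2$; the two constructions are interchangeable, and your arrangement of $\zeta\subset\Sigma$ is a slightly more detailed version of the paper's ``by isotopy, we may assume that $\zeta(I)\subset f(S^3)$.''

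Part~(2) has a genuine gap. You invoke ``excision'' to identify $\pi_k(M, M\setminus p_0)$ with $\pi_k(\R^4, \R^4\setminus 0)$ for $k\le 4$, and conclude that the kernel of $\pi_3(M\setminus p_0)\twoheadrightarrow\pi_3(M)$ is an infinite \emph{cyclic} group generated by $[\Sigma_0]$. But homotopy groups of pairs do not satisfy excision in general, and the Blakers--Massey range fails exactly at $k=4$ here unless $\pi_1(M)=1$: with $A$ a coordinate ball around $p_0$ and $B=M\setminus p_0$, the pair $(A,A\cap B)$ is $3$-connected while $(B,A\cap B)$ is only $0$-connected when $\pi_1(M)\ne 1$, so the excision map on $\pi_4$ need not be an isomorphism. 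In fact $\pi_4(M,M\setminus p_0)$ is generated as a $\Z[\pi_1(M)]$-module by the class of a local degree-one map near $p_0$, and the correct statement (as in the paper) is that the kernel is the $\Z[\pi_1(M)]$-\emph{submodule} of $\pi_3(M\setminus p_0)$ generated by $[\Sigma_0]$. Since you then argue only that $d([\Sigma_0])=0$, your proof does not cover the translates $g\cdot[\Sigma_0]$ for $g\in\pi_1(M)$, $g\ne 1$, and $d_\zeta$ is not a $\Z[\pi_1]$-module map, so this is a real omission. The fix is short: each class $g\cdot[\Sigma_0]$ is still represented by the same embedded $3$-sphere $\Sigma_0$ with a different base path, so part~(1) applies to every generator of the kernel, and $d$ descends as claimed. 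You should replace the $\pi_4$ excision claim by the general-position statement that a map $(D^4,S^3)\to (M,M\setminus p_0)$ can be made transverse to $p_0$, showing directly that the kernel is generated by $\pi_1$-translates of $[\Sigma_0]$.
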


\begin{proof}
  (1) Write $S^3$ as the union of hemispheres $I^3_- \cup_\partial I^3_+$, use $*=(\frac12,\frac12,\frac12)\in I^3_- = I^3$ as the basepoint for $S^3$ and use $\zeta(\frac12)\in M$ as the basepoint for~$\pi_3(M)$.
  Let $f\colon S^3\to M$ be a topological embedding which represents $\alpha\in \pi_3(M)$.
  By isotopy, we may assume that $\zeta(I) \subset f(S^3)$ and $f(*) = \zeta(\frac12)$.
  Then, we can homotope $f$ to a map $g\colon S^3=I^3_- \cup_\partial I^3_+ \to M$ such that $g$ is injective on $\inte(I^3_+)$ and $g$ on ${I^3_-}$ is given by $g(s,t,u)=\zeta(s)$.
  This means that the restriction $H\colon I^3 = I^3_+ \to M$ of $g$ on $I^3_+$ is a kernel map with respect to~$\zeta$.
  Since $H$ represents~$\alpha$ and is injective on the interior, $d_\zeta(\alpha)=\dax(H)=0$ by Lemma~\ref{lemma:smooth-dax-top-invariance}.

  (2) The inclusion induces a surjection $\pi_3(M\setminus p_0)\to \pi_3(M)$, whose kernel is the $\Z[\pi_1(M)]$-submodule generated by the boundary 3-sphere of a 4-ball neighborhood of~$p_0$, by a general position argument.
  The conclusion follows from this and~(1).
\end{proof}

We use Lemma~\ref{lemma:dax-homomorphism-on-embedded-3-sphere}~(2) in the following definition.

\begin{definition}[Dax invariant for disks in a topological 4-manfold]
  \label{definition:dax-in-top-4-manifold}
  Define the \emph{Dax invariant} in the topological 4-manifold $M$ by
  \[
    \Dax^\top_M(D_0,D_1)=\Dax^\top_{M\setminus p_0}(D_0,D_1) \in \Z[\pi_1(M)\setminus 1]^\sigma / d(\pi_3(M)).
  \]
\end{definition}

Theorem~\ref{theorem:dax-in-top-4-manifold-properties} stated below readily follows from Theorem~\ref{theorem:dax-top-properties}.

\begin{theorem}
  \label{theorem:dax-in-top-4-manifold-properties}
  Let $D_0$, $D_1$ and $D_2$ are disks in a topological 4-manifold $M$, which are mutually homotopic rel~$\partial$.
  Then the following hold.
  \begin{enumerate}
    \item\label{item:dax-in-top-4-manifold-additivity} $\Dax^\top_M(D_0,D_2) = \Dax^\top_M(D_0,D_1) + \Dax^\top_M(D_1,D_2)$.
    \item\label{item:dax-in-top-4-manifold-inverse} $\Dax^\top_M(D_0,D_1) = -\Dax^\top_M(D_1,D_0)$
  \end{enumerate}
\end{theorem}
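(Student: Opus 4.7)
The plan is to reduce Theorem~\ref{theorem:dax-in-top-4-manifold-properties} directly to Theorem~\ref{theorem:dax-top-properties} applied in the smooth 4-manifold $M\setminus p_0$, by carefully arranging the disks to avoid the singular point $p_0$.

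First, I would arrange that none of $D_0$, $D_1$, $D_2$ passes through the singular point $p_0$. Since each $D_i$ is 2-dimensional and $\{p_0\}$ is 0-dimensional in the 4-manifold $M$, topological transversality (see 9.5 of~\cite{Freedman-Quinn:1990-1}) allows us to isotope each $D_i$ off $p_0$. By Theorem~\ref{theorem:dax-top-isotopy} applied in $M\setminus p_0$, which shows topological isotopy invariance, this isotopy does not affect the value of $\Dax^\top_{M\setminus p_0}$, so we may assume $D_0, D_1, D_2 \subset M\setminus p_0$. Moreover, since $D_0,D_1,D_2$ are mutually homotopic rel $\partial$ in $M$, and homotopies can also be perturbed to miss $p_0$ by the same transversality argument (analogously to the homotopy analog of Lemma~\ref{lemma:isotopy-avoiding-points}), the three disks are mutually homotopic rel $\partial$ in the smooth 4-manifold $M\setminus p_0$.

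Next, Definition~\ref{definition:dax-in-top-4-manifold} gives
\[
  \Dax^\top_M(D_i,D_j) = \Dax^\top_{M\setminus p_0}(D_i,D_j)
\]
as elements of $\Z[\pi_1(M)\setminus 1]^\sigma/d(\pi_3(M))$, where the identification $d(\pi_3(M\setminus p_0)) = d(\pi_3(M))$ is provided by Lemma~\ref{lemma:dax-homomorphism-on-embedded-3-sphere}(2). Now Theorem~\ref{theorem:dax-top-properties}\ref{item:dax-top-additivity} applied in the smooth 4-manifold $M\setminus p_0$ yields
\[
  \Dax^\top_{M\setminus p_0}(D_0,D_2) = \Dax^\top_{M\setminus p_0}(D_0,D_1) + \Dax^\top_{M\setminus p_0}(D_1,D_2),
\]
and Theorem~\ref{theorem:dax-top-properties}\ref{item:dax-top-inverse} yields $\Dax^\top_{M\setminus p_0}(D_0,D_1) = -\Dax^\top_{M\setminus p_0}(D_1,D_0)$. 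Combining these two identities with the definitional equality $\Dax^\top_M = \Dax^\top_{M\setminus p_0}$ gives both assertions~\ref{item:dax-in-top-4-manifold-additivity} and~\ref{item:dax-in-top-4-manifold-inverse}.

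The only subtlety worth checking is that both sides of the additivity formula are computed relative to a \emph{single} almost smooth structure and a \emph{single} choice of smoothings in a common stabilization $M\#k(S^2\times S^2)$; once the three disks lie in $M\setminus p_0$, this is automatic because Theorem~\ref{theorem:dax-top-well-defined} guarantees independence from the choices of stabilization and smoothings used in Definition~\ref{definition:dax-top}. Thus no real obstacle arises, and Theorem~\ref{theorem:dax-in-top-4-manifold-properties} follows immediately.
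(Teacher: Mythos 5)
Your proof is correct and matches the paper's (one-line) argument exactly: the paper simply states that Theorem~\ref{theorem:dax-in-top-4-manifold-properties} "readily follows from Theorem~\ref{theorem:dax-top-properties}," and you have faithfully unpacked the reduction to $M\setminus p_0$, including the observations that the disks can be isotoped off $p_0$ (already built into Definition~\ref{definition:dax-in-top-4-manifold}) and that Lemma~\ref{lemma:dax-homomorphism-on-embedded-3-sphere}(2) identifies the indeterminacy subgroups.
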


\begin{theorem}
  \label{theorem:dax-in-top-4-manifold-isotopy}
  The value of $\Dax^\top_M(D_0,D_1)$ is invariant under isotopy of $D_0$ and $D_1$ in the topological 4-manifold~$M$.
  If $D_0$ and $D_1$ are isotopic in $M$, $\Dax^\top_M(D_0,D_1)$ vanishes.
\end{theorem}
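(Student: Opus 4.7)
The plan is to reduce both assertions to their smooth counterpart, Theorem~\ref{theorem:dax-top-isotopy}, applied inside the smooth 4-manifold $M\setminus p_0$. The bridge will be Lemma~\ref{lemma:isotopy-avoiding-points}, which lets one upgrade an isotopy taking place in the full topological manifold $M$ to one that avoids the singular point $p_0$ of the fixed almost smooth structure.

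I will first establish the second sentence (the vanishing claim). Assume $D_0$ and $D_1$ are topologically isotopic in $M$. By the convention underlying Definition~\ref{definition:dax-in-top-4-manifold}, together with a small preliminary isotopy if necessary, one may arrange $p_0 \notin D_0 \cup D_1$. Lemma~\ref{lemma:isotopy-avoiding-points}, applied with $q = p_0$, then produces an isotopy from $D_0$ to $D_1$ entirely inside the smooth 4-manifold $M\setminus p_0$. Theorem~\ref{theorem:dax-top-isotopy} applied in $M\setminus p_0$ yields $\Dax^\top_{M\setminus p_0}(D_0,D_1) = 0$, which by Definition~\ref{definition:dax-in-top-4-manifold} is exactly $\Dax^\top_M(D_0,D_1)$.

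The invariance assertion (the first sentence) will then be a formal consequence via additivity. Given isotopies $D_0 \simeq D_0'$ and $D_1 \simeq D_1'$ in $M$, with all four disks arranged to avoid $p_0$, Theorem~\ref{theorem:dax-in-top-4-manifold-properties}~\ref{item:dax-in-top-4-manifold-additivity} supplies
\[
  \Dax^\top_M(D_0',D_1') = \Dax^\top_M(D_0',D_0) + \Dax^\top_M(D_0,D_1) + \Dax^\top_M(D_1,D_1'),
\]
and the two outer terms vanish by the statement just proved.

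The only substantive step is the transfer of an isotopy in $M$ to one in $M\setminus p_0$; Lemma~\ref{lemma:isotopy-avoiding-points} handles this cleanly, so no essentially new difficulty arises and the theorem follows formally from results already in place.
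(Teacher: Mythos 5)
Your proposal is correct and follows essentially the same route as the paper: invoke Lemma~\ref{lemma:isotopy-avoiding-points} to push the isotopy off the singular point $p_0$, then reduce to Theorem~\ref{theorem:dax-top-isotopy} in the smooth manifold $M\setminus p_0$, with the invariance statement following from the vanishing statement via additivity (Theorem~\ref{theorem:dax-in-top-4-manifold-properties}). You have simply spelled out the details that the paper compresses into ``readily follows.''
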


\begin{proof}
  By Theorem~\ref{theorem:dax-top-isotopy}, $\Dax^\top_M(D_0,D_1):=\Dax^\top_{M\setminus p_0}(D_0,D_1)$ is invariant under topological isotopy in $M\setminus p_0$.
  The conclusion readily follows from this and Lemma~\ref{lemma:isotopy-avoiding-points}.
\end{proof}

\begin{remark}
  Recall that we isotope $D_i$ to avoid the singular point $p_0$ in Definition~\ref{definition:dax-in-top-4-manifold}.
  Theorem~\ref{theorem:dax-in-top-4-manifold-isotopy} ensures that the value of $\Dax^\top_M(D_0,D_1)$ is independent of this isotopy.
  So $\Dax^\top_M(D_0,D_1)$ is well-defined even when $D_i$ contains~$p_0$.
\end{remark}

\section{Proofs of Corollaries~\ref{corollary:top-equal-smooth-disk-isotopy}, \ref{corollary:disk-top-isotopy-classification} and~\ref{corollary:knotted-3-disk}}
\label{section:proof-top=smooth-for-disk}

First we prove Corollary~\ref{corollary:disk-top-isotopy-classification}\@.
To recall the statement, let $M$ be a topological 4-manifold and $D_0$ be a topological disk in $M$ such that a meridian of $D_0$ is null-homotopic in $\partial M \setminus \partial D_0$.
Let $\cD^\top_M(D_0)$ be the set of isotopy classes of disks in $M$ homotopic to $D_0$ rel~$\partial$.
Corollary~\ref{corollary:disk-top-isotopy-classification} asserts that
\[
  \Dax^\top_M(D_0,-)\colon
  \cD^\top_M(D_0)
  \to
  \Z[\pi_1(M)\setminus 1]^\sigma / d(\pi_3(M))
\]
defined in Definition~\ref{definition:dax-in-top-4-manifold} is a bijection.

As a special case, suppose that $M$ has a smooth structure.
Proof of the general case and the proof of Corollary~\ref{corollary:top-equal-smooth-disk-isotopy} rely on this special case.

\begin{proof}[Proof of Corollary~\ref{corollary:disk-top-isotopy-classification}, when $M$ has a smooth structure]
  We may assume that the disk $D_0$ is smooth in~$M$, by the Light Bulb Smoothing Theorem~\ref{theorem:light-bulb-smoothing}\@.
  Let $\cD^\sm_M(D_0)$ be the set of smooth isotopy classes of smooth disks in $M$ homotopic to $D_0$ rel~$\partial$.
  Consider the following diagram, where $\Phi\colon \cD^\sm_M(D_0) \to \cD^\top_M(D_0)$ is the map induced by the identity, and $\Dax^\top_M(D_0,-)$ is the invariant in Definition~\ref{definition:dax-top}.
  \[
    \begin{tikzcd}[row sep=tiny]
      \cD^\sm_M(D_0) \ar[dd,"\Phi"']
        \ar[rd,"{\Dax^\sm_M(D_0,-)}",end anchor={[yshift=-1.2ex]north west},pos=.3]
      \\
      & \Z[\pi_1(M)\setminus 1]^\sigma / d(\pi_3(M))
      \\
      \cD^\top_M(D_0)
        \ar[ru,"{\Dax^\top_M(D_0,-)}"',end anchor={[yshift=1.2ex]south west},pos=.3]
    \end{tikzcd}  
  \]
  The map $\Dax^\top_M(D_0,-)$ is well-defined on $\cD^\top_M(D_0)$ by Theorem~\ref{theorem:dax-top-isotopy}.
  The diagram is commutative by Theorem~\ref{theorem:dax-top-properties}~\ref{item:dax-top-smooth}.
  By the Light Bulb Smoothing Theorem~\ref{theorem:light-bulb-smoothing}, $\Phi$~is onto.
  By the classification of $\cD^\sm_M(D_0)$ due to Kosanovi\'c and Teichner~\cite{Kosanovic-Teichner:2024-2}, $\Dax^\sm_M(D_0,-)$ is bijective.
  (Theorems~1.1 and~1.5~(3) of~\cite{Kosanovic-Teichner:2024-2} give the injectivity and surjectivity respectively.)
  It follows that $\Phi$ and $\Dax^\top_M(D_0,-)$ are bijective.
\end{proof}

\begin{proof}[Proof of Corollary~\ref{corollary:disk-top-isotopy-classification}, for the general case]
  Let $M$ be a topological 4-manifold.
  Fix an interior point $p_0$ and a smooth structure on $M\setminus p_0$.
  Consider the invariants $\smash{\Dax^\top_{M\setminus p_0}(D_0,-)}$ in Definition~\ref{definition:dax-top} and $\smash{\Dax^\top_M(D_0,-)}$ in Definition~\ref{definition:dax-in-top-4-manifold}.
  By Theorems~\ref{theorem:dax-top-isotopy} and \ref{theorem:dax-in-top-4-manifold-isotopy}, they are well-defined on $\cD^\top_{M\setminus p_0}(D_0)$ and $\cD^\top_M(D_0)$ respectively, and by Definition~\ref{definition:dax-in-top-4-manifold}, the following diagram is commutative, where $\Psi$ is induced by the identity.
  \[
    \begin{tikzcd}[row sep=tiny]
      \cD^\top_{M\setminus p_0}(D_0) \ar[dd,"\Psi"']
        \ar[rd,"{\Dax^\top_{M\setminus p_0}(D_0,-)}",end anchor={[yshift=-1.2ex]north west},pos=.3]
      \\
      & \Z[\pi_1M\setminus 1]^\sigma / d(\pi_3M)
      \\
      \cD^\top_M(D_0)
        \ar[ru,"{\Dax^\top_M(D_0,-)}"',end anchor={[yshift=1.2ex]south west},pos=.3]
    \end{tikzcd}  
  \]
  By the above special case, ${\Dax^\top_{M\setminus p_0}(D_0,-)}$ is bijective.
  Since we can isotope a disk to avoid~$p_0$, $\Psi$ is surjective.
  It follows that ${\Dax^\top_M(D_0,-)}$ is bijective.
\end{proof}

Now, Corollary~\ref{corollary:top-equal-smooth-disk-isotopy} is readily obtained from the above special case proof of Corollary~\ref{corollary:disk-top-isotopy-classification}\@.
Let $M$ be a smooth 4-manifold and $K$ be a fixed circle in $\partial M$ which has a geometric dual in $\partial M$.
For brevity, denote by $\cD^\sm$ and $\cD^\top$ the sets of smooth and topological isotopy classes of smooth and topological disks in $M$ bounded by~$K$, respectively.
Recall that Corollary~\ref{corollary:top-equal-smooth-disk-isotopy} asserts that the natural map $\cD^\sm \to \cD^\top$ is a bijection.

\begin{proof}[Proof of Corollary~\ref{corollary:top-equal-smooth-disk-isotopy}]
  Choose smooth disk representatives $D_\alpha$ of the homotopy (rel~$\partial$) classes of smooth disks bounded by~$K$.
  Then $\cD^\sm$ is partitioned into the disjoint union of~$\cD^\sm_M(D_\alpha)$.
  A topological disk bounded by~$K$ is smoothable by the Light Bulb Smoothing Theorem~\ref{theorem:light-bulb-smoothing}\@ and thus homotopic to some~$D_\alpha$.
  It follows that $\cD^\top$ is partitioned into the disjoint union of~$\smash{\cD^\top_M(D_\alpha)}$.
  In the special case proof of Corollary~\ref{corollary:disk-top-isotopy-classification}, we showed that $\Phi\colon \cD^\sm_M(D_\alpha) \to \cD^\top_M(D_\alpha)$ is bijective.
  Since it holds for each $D_\alpha$, it follows that $\cD^\sm \to \cD^\top$ is bijective.
\end{proof}

We finish this section with a proof of Corollary~\ref{corollary:knotted-3-disk}.

\begin{proof}[Proof of Corollary~\ref{corollary:knotted-3-disk}]
  Our proof follows Gabai's proof of the smooth version~\cite[Proof of Theorem~5.1]{Gabai:2021-1}, replacing the use of smooth isotopy invariance of Dax and the smooth light bulb theorem with their topological analogs that we proved in this paper.
  Details are as follows.

  Let $D_0=*\times D^2$ in the first factor of $M=(S^2\times D^2)\#(S^1\times D^3)$ and $\Delta_0=*\times D^3$ in the second factor of~$M$.
  Note $D_0$ and $\Delta_0$ are disjoint.
  Gabai constructed self-diffeomorphisms $\psi_i$ on $M=(S^2\times D^2)\#(S^1\times D^3)$ ($i\ge 1$) such that $\psi_i\simeq \id_M$ rel $\partial$ and the disks $D_i := \psi_i(D_0)$ satisfy $\Dax^\sm(D_i,D_j)\ne 0$ whenever $i\ne j$~\cite[Lemma~5.3 and Proof of Theorem~5.1]{Gabai:2021-1}.

  Let $\Delta_i := \psi_i(\Delta_0)$.
  Suppose $\Delta_i$ is topologically isotopic to $\Delta_j$, $i\ne j$.
  Then an ambient topological isotopy on $M$ covering the isotopy sends $D_i$ to a topological disk $D'\subset M$ which is disjoint from~$\Delta_j$.
  Let $N_j$ be the exterior of $\Delta_j \subset M$.
  Note that $N_j=\psi_j(N_0) \cong N_0= S^2\times D^2$ and $D'$, $D_j$ are disks in~$N_j$.
  Since the spheres $D'\cup_\partial -D_i$ and $D_i\cup_\partial -D_j$ are null-homotopic in~$M$ and $\pi_2(N_j)\cong H_2(N_j)$ injects into $H_2(M)$, $D'\cup_\partial -D_j$ is null-homotopic in $N_j$, i.e.,\ $D'$ and $D_j$ are homotopic rel $\partial$ in~$N_j$.
  By Corollary~\ref{corollary:disk-top-isotopy-classification}, it follows that $D'$ is topologically isotopic to $D_j$ in~$N_j$, since $\pi_1(N_j)=0$.
  Therefore, the smooth disks $D_i$ and $D_j$ are topologically isotopic in~$M$ and so $\Dax^\top(D_i,D_j) = \Dax^\sm(D_i,D_j)=0$ by Theorem~\ref{theorem:dax-top-isotopy}.
  This is a contradiction.
\end{proof}

\section{Applications to spheres}
\label{section:application-to-spheres}

In this section, we prove sphere analogs of our results on disks shown in Sections~\ref{section:top-dax-invariant} and~\ref{section:proof-top=smooth-for-disk}.
The arguments in this section are also similar to the disk case, except that we use the Freedman-Quinn invariant~\cite{Freedman-Quinn:1990-1,Stong:1993-1} in place of the Dax invariant.
In recent work of Schneiderman and Teichner~\cite{Schneiderman-Teichner:2022-1} and subsequent papers of Klug and Miller~\cite{Klug-Miller:2021-1,Klug-Miller:2022-1}, 
the Freedman-Quinn invariant for smooth spheres is a key ingredient for the study of 4-dimensional light bulb theorems.
We need a topological version of the invariant, which generalizes the smooth version used in~\cite{Schneiderman-Teichner:2022-1,Klug-Miller:2021-1,Klug-Miller:2022-1}.
Specifically, we use the following.

\begin{theorem}
  \label{theorem:fq-top}
  Let $M$ be a topological 4-manifold.
  Then there is a function
  \[
    \FQ^\top_M \colon \left\{ (R_0, R_1) \, \left| \,
    \begin{tabular}{@{}c@{}}
      $R_0$, $R_1$ are homotopic spheres in $M$ \\
      with a common geometric dual sphere
    \end{tabular} \right.\right\}
    \to \bF_2 T_M / \mu(\pi_3(M))
  \]
  satisfying the following.
  \begin{enumerate}
    \item The value of $\FQ^\top_M(R_0,R_1)$ is invariant under topological concordance of $R_0$ and~$R_1$.
    If $R_0$ and $R_1$ are topologically concordant, $\FQ^\top_M(R_0,R_1)=0$.
    \item For any smooth structure on $M$, $\FQ^\top_M(R_0,R_1)$ is equal to the value of the smooth Freedman-Quinn invariant defined in~\cite{Schneiderman-Teichner:2022-1,Klug-Miller:2021-1,Klug-Miller:2022-1} if $R_0$ and $R_1$ are smooth.
  \end{enumerate}
\end{theorem}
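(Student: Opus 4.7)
The plan is to construct $\FQ^\top_M$ in close analogy with the smooth Freedman--Quinn invariant, by counting double points on a topological singular concordance. Given homotopic topological spheres $R_0,R_1$ with a common topological geometric dual~$G$, the construction proceeds as follows: choose a topological homotopy $H\colon S^2\times I\to M$ from $R_0$ to $R_1$, put its track $\widebar{H}(x,t)=(H(x,t),t)\in M\times I$ into generic position with only transverse double points, count these double points modulo~$2$ weighted by their double-point loop classes in $\pi_1(M)$, verify via the standard symmetry-under-swapping-pre-images argument that the resulting sum lies in the subspace $\bF_2 T_M$, and pass to the quotient by $\mu(\pi_3(M))$ to absorb the dependence on the choice of~$H$. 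The main technical obstacle here --- establishing topological general position for the five-dimensional track map $\widebar H$ --- is carried out in the appendix, using Lees' immersion theorem to install a smooth structure on a neighborhood of the track and then applying Kirby--Siebenmann high-dimensional smoothing theory; the existence of a topological normal bundle for each $R_i$ (invoked also in Section~\ref{section:proof-light-bulb-smoothing}) ensures that double-point loops and their $\pi_1$-classes are well-defined topologically.

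For clause~(1), concordance invariance, the key observation is that if $R_0$ and $R_1$ are topologically concordant then the concordance annulus is itself a topological homotopy that is a locally flat embedding, so its track $\widebar H$ has no double points and $\FQ^\top_M(R_0,R_1)=0$. More generally, if $R_i$ is topologically concordant to $R_i'$ for $i=0,1$, additivity of the double-point count under concatenation of homotopies yields $\FQ^\top_M(R_0,R_1)=\FQ^\top_M(R_0',R_1')$ modulo $\mu(\pi_3(M))$, so $\FQ^\top_M$ descends to pairs of concordance classes.

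For clause~(2), agreement with the smooth FQ invariant, when $M$ carries a smooth structure with respect to which $R_0,R_1,G$ are smooth, one is free to take $H$ to be smooth and in smooth generic position. Since smooth generic position implies topological generic position and the double-point signs and $\pi_1$-classes computed in the two categories coincide, the counts agree and therefore $\FQ^\top_M(R_0,R_1)=\FQ^\sm(R_0,R_1)$. Importantly, the topological construction makes no reference to a smooth structure, so this identification is automatic and does not depend on which smooth structure is chosen when $M$ admits more than one. Once the appendix supplies the topological general position framework, the rest of the proof --- well-definedness up to $\mu(\pi_3(M))$ (via the standard interpretation of $\mu$ as self-intersections of a homotopy represented by an element of $\pi_3(M)$) together with the two clauses above --- is a direct translation of the smooth arguments of Stong, Schneiderman--Teichner, and Klug--Miller into the topological category.
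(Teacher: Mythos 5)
Your overall strategy --- define $\FQ^\top_M$ by counting double points on a generic perturbation of a homotopy track, install topological general position via Lees' immersion theorem and Kirby--Siebenmann smoothing theory, and then transport the smooth arguments of Schneiderman--Teichner and Klug--Miller --- does match the paper's approach in the appendix.

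There is, however, a concrete dimension slip that would derail the construction if taken literally. You place the track $\widebar{H}(x,t)=(H(x,t),t)$ in the 5-manifold $M\times I$ and propose to perturb it until it has only finitely many transverse double points. But a generic map from the 3-manifold $S^2\times I$ into a 5-manifold has a \emph{one-dimensional} self-intersection set (since $2\cdot 3 - 5 = 1$), not isolated double points, so there is nothing to count. The paper, following Schneiderman--Teichner, works instead in the 6-manifold $M\times\R\times I$: a map $H\colon S^2\times I\to M\times\R\times I$ restricting to $R_i\times 0\times i$ at the ends is perturbed to a generic immersion in dimension six, where double points are isolated ($2\cdot 3 - 6 = 0$). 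This is why Lemma~\ref{lemma:homotopy-to-generic-immersion} in the appendix is stated for maps $Y\to P$ with $P$ a \emph{6-manifold}. Homotopy invariance of $\mu$ then further requires Lemma~\ref{lemma:homotopy-to-generic-map}, the 7-dimensional generic-map statement for $Y\times I\to P\times I$, because cusp singularities appear in a one-parameter family; your sketch does not distinguish this second, higher-dimensional general-position input.

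Two further points are glossed over. The ``symmetry-under-swapping-preimages'' argument only places $\mu(H)$ in $\Z[\pi_1 M]/\langle 1, g+g^{-1}\rangle$; the fact that the value actually lies in the much smaller subspace $\bF_2 T_M$ is Lemma~4.1 of Schneiderman--Teichner and uses the additional structure of maps to $M\times\R\times I$. And the well-definedness modulo $\mu(\pi_3(M))$ (ST Lemma~4.5) is carried out, in the topological category, by arguments that also require a topological version of the intersection pairing $\lambda$, not only the self-intersection invariant $\mu$; the paper flags this explicitly and constructs $\lambda$ using topological transversality for exactly this reason. You do not mention $\lambda$ at all. Finally, the topological normal bundle you invoke is not what makes double-point loops well-defined; rather, the relevant normal-bundle input in the appendix is for the 3-manifold $Y$ mapping into the 6-manifold $P$ (to feed Lees' theorem), not for the surfaces $R_i$ in $M$.
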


Here, $\bF_2 T_M$ is the vector space over the finite field $\bF_2$ of order 2 generated by the set $T_M$ of order 2 elements in $\pi_1(M)$, and $\mu\colon \pi_3(M)\to \bF_2 T_M$ is a self-intersection invariant, which is analogous to the Dax homomorphism $d$ used in Section~\ref{section:top-dax-invariant}.

The outline of the proof of Theorem~\ref{theorem:fq-top} is similar to the smooth category development detailed in~\cite[Section~4]{Schneiderman-Teichner:2022-1} (see also~\cite{Klug-Miller:2021-1,Klug-Miller:2022-1}), but it requires additional tools to approximate maps by ``generic'' maps in the topological category.
Although the technical details seem to be known to experts, we could not find a proof in the literature.
So we provide a proof in an appendix of this paper.

\begin{remark}
  Theorem~\ref{theorem:fq-top} may be compared with Theorem~\ref{theorem:dax-top} which is for the Dax invariant.
  While Theorem~\ref{theorem:fq-top} seems to be known, Theorem~\ref{theorem:dax-top} is new, to the authors' knowledge.
  The approach of the proof of Theorem~\ref{theorem:fq-top} in the appendix is completely different from the proof of Theorem~\ref{theorem:dax-top} in Section~\ref{section:top-dax-invariant}.
  The former uses high-dimensional smoothing theory of Kirby and Siebenmann~\cite{Kirby-Siebenmann:1977-1} and topological immersion theory of Lees~\cite{Lees:1969-1} (see section~\ref{subsection:immersion-smoothing-generic-maps} in the appendix), while the latter relies on our surface smoothing results in dimension~4 (see Section~\ref{section:top-dax-invariant}).
  In fact, the arguments in Section~\ref{section:top-dax-invariant} can also be used, without essential changes, to provide a quick proof of a Freedman-Quinn version of Theorem~\ref{theorem:dax-top}\@.
  This is sufficient for the applications in this paper (Corollaries~\ref{corollary:top-equal-smooth-sphere-isotopy}, \ref{corollary:sphere-top-isotopy-classification}, \ref{corollary:top-equal-smooth-concordance-isotopy}, \ref{corollary:top-equal-smooth-sphere-isotopy-concordance} and~\ref{corollary:sphere-in-top-4-manifold-isotopy-concordance}).
  On the other hand, Theorem~\ref{theorem:fq-top} is stronger because the resulting invariant does not depend on a choice of an almost smooth structure.
  As aforementioned in Remark~\ref{remark:dax-why-smoothing-approach}, it seems difficult, if not impossible, to apply the approach of the appendix to the Dax invariant.
\end{remark}

In what follows, we discuss applications to spheres which are obtained by combining our smoothing results with the topological Freedman-Quinn invariant.
Especially, Corollaries~\ref{corollary:top-equal-smooth-sphere-isotopy}, \ref{corollary:sphere-top-isotopy-classification} and~\ref{corollary:top-equal-smooth-concordance-isotopy} in the introduction readily follow from Corollaries~\ref{corollary:top-equal-smooth-sphere-isotopy-concordance} and~\ref{corollary:sphere-in-top-4-manifold-isotopy-concordance} which are stated and proven below.

Let $M$ be a smooth 4-manifold, $G$ be a framed smooth sphere in~$M$, and $R_0$ be a smooth sphere in $M$ for which $G$ is a geometric dual.
Let $\cR^\sm_M(G,R_0)$ and $\cR^\top_M(G,R_0)$ be the sets of smooth and topological isotopy classes of spheres in $M$ which are homotopic to $R_0$ and have $G$ as a geometric dual, respectively.
Similarly, define $\cC^\sm_M(G,R_0)$ and $\cC^\top_M(G, R_0)$ by replacing ``isotopy'' with ``concordance.''
By Theorem~\ref{theorem:fq-top}, we have the following commutative diagram.
By the main results of~\cite{Schneiderman-Teichner:2022-1}, the top horizontal arrow and $\FQ^\sm_M(R_0,-)$ are bijections.

\begin{equation}
  \begin{tikzcd}[row sep=tiny]
    \cR^\sm_M(G,R_0) \ar[r] \ar[dd]
    & \cC^\sm_M(G,R_0) \ar[dd]
      \ar[rd,"{\FQ^\sm_M(R_0,-)}",end anchor={[yshift=-.8ex]north west},pos=.3]
    \\
    & & \bF_2 T_M / \mu(\pi_3(M))
    \\
    \cR^\top_M(G,R_0) \ar[r]
    & \cC^\top_M(G,R_0)
      \ar[ru,"{\FQ^\top_M(R_0,-)}"',end anchor={[yshift=.8ex]south west},pos=.3]
    \\
  \end{tikzcd}
  \label{equation:top-smooth-isotopy-concordance-R_0}
\end{equation}
Also, denote by $\cR^\sm_M(G)$ and $\cR^\top_M(G)$ the sets of smooth and topological isotopy classes of smooth and topological spheres in $M$ which have $G$ as a geometric dual, respectively.
Define $\cC^\sm_M(G)$ and $\cC^\top_M(G)$ by replacing isotopy with concordance.
Then, we have the following commutative diagram.
\begin{equation}
  \begin{tikzcd}
    \cR^\sm_M(G) \ar[r] \ar[d] & \cC^\sm_M(G) \ar[d]
    \\
    \cR^\top_M(G) \ar[r] & \cC^\top_M(G)
  \end{tikzcd}
  \label{equation:top-smooth-isotopy-concordance-all}
\end{equation}

\begin{corollary}
  \label{corollary:top-equal-smooth-sphere-isotopy-concordance}
  All maps in~\eqref{equation:top-smooth-isotopy-concordance-R_0} and~\eqref{equation:top-smooth-isotopy-concordance-all} are bijections.
\end{corollary}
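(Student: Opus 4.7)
\emph{Plan of proof.} The strategy is to first prove that diagram~\eqref{equation:top-smooth-isotopy-concordance-R_0} consists entirely of bijections, and then deduce the bijectivity of the arrows in~\eqref{equation:top-smooth-isotopy-concordance-all} by a partition-over-homotopy-classes argument. For~\eqref{equation:top-smooth-isotopy-concordance-R_0}, the Schneiderman--Teichner classification already supplies two bijections: the top horizontal arrow $\cR^\sm_M(G,R_0)\to\cC^\sm_M(G,R_0)$ and the smooth diagonal $\FQ^\sm_M(R_0,-)$. Once I establish that the topological diagonal $\FQ^\top_M(R_0,-)\colon \cC^\top_M(G,R_0)\to \bF_2 T_M/\mu(\pi_3(M))$ is also a bijection, commutativity of the right triangle forces the right-hand vertical $\cC^\sm_M(G,R_0)\to\cC^\top_M(G,R_0)$ to be a bijection, and then a diagram chase combined with Theorem~\ref{theorem:light-bulb-smoothing} will yield bijectivity of the remaining two arrows.

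\emph{Bijectivity of $\FQ^\top_M(R_0,-)$.} Surjectivity is immediate: $\FQ^\sm_M(R_0,-)$ is already surjective, and on smooth spheres $\FQ^\top_M=\FQ^\sm_M$ by Theorem~\ref{theorem:fq-top}~(2). For injectivity, let $R$ be a topological sphere with geometric dual $G$, homotopic to $R_0$, such that $\FQ^\top_M(R_0,R)=0$. Apply Theorem~\ref{theorem:light-bulb-smoothing} to $R$, using the parallel-dual trick of Remark~\ref{remark:light-bulb-smoothing-restricted-support} to arrange the smoothing isotopy to be supported away from $G$; this produces a smooth sphere $R'$ which is topologically isotopic to $R$ and for which $G$ remains a geometric dual. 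A topological isotopy is a topological concordance, so Theorem~\ref{theorem:fq-top}~(1) gives $\FQ^\top_M(R_0,R')=\FQ^\top_M(R_0,R)=0$. Since $R_0$ and $R'$ are both smooth, Theorem~\ref{theorem:fq-top}~(2) identifies this with $\FQ^\sm_M(R_0,R')=0$, and the Schneiderman--Teichner bijection forces $R_0$ and $R'$ to be smoothly (hence topologically) concordant. Concatenating with the isotopy from $R'$ to $R$ shows that $R_0$ and $R$ represent the same class in $\cC^\top_M(G,R_0)$. Thus the right triangle of~\eqref{equation:top-smooth-isotopy-concordance-R_0} has all three arrows bijective, and commutativity of the two squares together with surjectivity of the left vertical (again by Theorem~\ref{theorem:light-bulb-smoothing} and Remark~\ref{remark:light-bulb-smoothing-restricted-support}) implies that the remaining arrows $\cR^\sm_M(G,R_0)\to\cR^\top_M(G,R_0)$ and $\cR^\top_M(G,R_0)\to\cC^\top_M(G,R_0)$ are bijections as well.

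\emph{From~\eqref{equation:top-smooth-isotopy-concordance-R_0} to~\eqref{equation:top-smooth-isotopy-concordance-all}.} Topological (and smooth) isotopy and concordance preserve the homotopy class of a sphere, and every topological sphere with geometric dual $G$ is topologically isotopic, hence homotopic, to a smooth one with dual $G$ by Theorem~\ref{theorem:light-bulb-smoothing} combined with Remark~\ref{remark:light-bulb-smoothing-restricted-support}. Consequently, if $[R_0]$ ranges over the homotopy classes of smooth spheres with geometric dual $G$, then
\[
  \cR^\sm_M(G) = \bigsqcup_{[R_0]} \cR^\sm_M(G,R_0),\quad
  \cC^\sm_M(G) = \bigsqcup_{[R_0]} \cC^\sm_M(G,R_0),
\]
and analogous decompositions hold in the topological category. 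Each of the four arrows in~\eqref{equation:top-smooth-isotopy-concordance-all} is the disjoint union over $[R_0]$ of the corresponding arrow in~\eqref{equation:top-smooth-isotopy-concordance-R_0}, and is therefore a bijection.

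\emph{Main obstacle.} There is no serious technical obstacle once the ingredients are in place; the only subtle point is ensuring that the smoothing provided by Theorem~\ref{theorem:light-bulb-smoothing} retains $G$ itself—and not merely a parallel copy—as a geometric dual, so that the smoothed sphere genuinely lies in $\cR^\top_M(G,R_0)$. This is precisely what Remark~\ref{remark:light-bulb-smoothing-restricted-support} provides by confining the smoothing isotopy to the exterior of a neighborhood of $G$.
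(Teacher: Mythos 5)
Your overall architecture matches the paper's — Light Bulb Smoothing gives surjectivity of the verticals, Schneiderman--Teichner gives bijectivity of the top arrow and of $\FQ^\sm_M(R_0,-)$, a diagram chase gives the rest, and the partition over homotopy classes upgrades~\eqref{equation:top-smooth-isotopy-concordance-R_0} to~\eqref{equation:top-smooth-isotopy-concordance-all}. Your explicit appeal to Remark~\ref{remark:light-bulb-smoothing-restricted-support} to keep $G$ itself (not a parallel) as a dual of the smoothed sphere is a point the paper leaves implicit and is worth spelling out.

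However, your ``injectivity of $\FQ^\top_M(R_0,-)$'' step has a genuine gap. What you prove is that the fiber of $\FQ^\top_M(R_0,-)$ over $0$ is the single class $[R_0]$. That is not injectivity: for a map of sets into a group, triviality of the fiber over $0$ does not imply all fibers are singletons. To finish along your route you would need additivity of $\FQ^\top$ ($\FQ^\top(R,S)=\FQ^\top(R,R_0)+\FQ^\top(R_0,S)$), which the paper establishes in the appendix but which Theorem~\ref{theorem:fq-top} as stated does not provide, plus a further smoothing of $R$ so the ``kernel'' argument can be applied with $R$ in the role of $R_0$. This detour is avoidable: the paper simply notes that \emph{both} vertical maps are surjective by Theorem~\ref{theorem:light-bulb-smoothing} (you only invoke this for the left one), so that $\FQ^\sm_M(R_0,-) = \FQ^\top_M(R_0,-)\circ c$ with $c$ surjective and the left side bijective \emph{forces} $\FQ^\top_M(R_0,-)$ to be bijective by an elementary set-theoretic argument — no fiber-over-$0$ analysis, no additivity needed. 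In short: your explicit construction for $\FQ^\top$ injectivity is both unnecessary and incomplete as written; replacing it with the observation that $c\colon\cC^\sm_M(G,R_0)\to\cC^\top_M(G,R_0)$ is onto closes the gap and reproduces the paper's argument.
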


In particular, for smooth spheres with a common smooth geometric dual in a smooth 4-manifold, the following are equivalent: smooth isotopy, smooth concordance, topological isotopy and topological concordance.

The proof proceeds similarly to that of Corollaries~\ref{corollary:top-equal-smooth-disk-isotopy} and~\ref{corollary:disk-top-isotopy-classification}\@. The only difference is that we use the topological Freedman-Quinn invariant instead of the Dax invariant.

\begin{proof}[Proof of Corollary~\ref{corollary:top-equal-smooth-sphere-isotopy-concordance}]
  In the diagram~\eqref{equation:top-smooth-isotopy-concordance-R_0}, the vertical arrows are surjective by the Light Bulb Smoothing Theorem~\ref{theorem:light-bulb-smoothing}\@.
  Since the top horizontal arrow and $\FQ^\sm_M(R_0,-)$ are bijective by~\cite{Schneiderman-Teichner:2022-1}, it follows that other maps in~\eqref{equation:top-smooth-isotopy-concordance-R_0} are bijective.

  As in the proof of Corollary~\ref{corollary:top-equal-smooth-disk-isotopy}, the assertion for~\eqref{equation:top-smooth-isotopy-concordance-all} follows from the result for~\eqref{equation:top-smooth-isotopy-concordance-R_0}, by partitioning $\cR^\bullet_M(G)$ and $\cC^\bullet_M(G)$ into the disjoint union of $\cR^\bullet_M(G,R_\alpha)$ and $\cC^\bullet_M(G,R_\alpha)$, where the spheres $R_\alpha$ are representatives of homotopy classes of smooth spheres having $G$ as a geometric dual.
  (For $\bullet=\top$, use the Light Bulb Smoothing Theorem~\ref{theorem:light-bulb-smoothing} to obtain the partition.)
\end{proof}

Now, let $M$ be a topological 4-manifold.
By Theorem~\ref{theorem:fq-top}, we have the following commutative diagram.
\begin{equation}
  \begin{tikzcd}[row sep=tiny]
    \cR^\top_{M\setminus p_0}(G,R_0) \ar[r] \ar[dd]
    & \cC^\top_{M\setminus p_0}(G,R_0)
      \ar[dd] \ar[rd,"{\FQ^\top_{M\setminus p_0}(R_0,-)}",end anchor={[yshift=-.8ex]north west},pos=.3]
    \\
    & & \bF_2 T_M / \mu(\pi_3(M))
    \\
    \cR^\top_M(G,R_0) \ar[r] 
    & \cC^\top_M(G,R_0) \ar[ru,"{\FQ^\top_M(R_0,-)}"',end anchor={[yshift=.8ex]south west},pos=.3]
  \end{tikzcd}
  \label{equation:top-isotopy-concordance-R_0}
\end{equation}

In~\eqref{equation:top-isotopy-concordance-R_0}, note that $\mu(\pi_3(M))$ is equal to $\mu(\pi_3(M\setminus p_0))$ as subgroups in $\bF_2 T_M = \bF_2 T_{M\setminus p_0}$, similarly to Lemma~\ref{lemma:dax-homomorphism-on-embedded-3-sphere}, since $\mu$ vanishes on $\pi_3$ elements represented by embedded spheres (see the definition of ~$\mu$ in~\ref{subsection:generic-maps-fq-top} in the appendix).

Also, consider the natural map
\begin{equation}
  \cR^\top_M(G) \to \cC^\top_M(G).
  \label{equation:top-isotopy-concordance-all}
\end{equation}

\begin{corollary}
  \label{corollary:sphere-in-top-4-manifold-isotopy-concordance}
  All maps in~\eqref{equation:top-isotopy-concordance-R_0} and~\eqref{equation:top-isotopy-concordance-all} are bijections.
\end{corollary}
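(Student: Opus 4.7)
The approach parallels the proof of Corollary~\ref{corollary:top-equal-smooth-sphere-isotopy-concordance}: I would first establish bijectivity of the top row of~\eqref{equation:top-isotopy-concordance-R_0}, which takes place in the smooth 4-manifold $M\setminus p_0$, by reducing to Corollary~\ref{corollary:top-equal-smooth-sphere-isotopy-concordance} itself, and then transfer the result to $M$ through the vertical maps, shown to be bijective by topological transversality. The statement for~\eqref{equation:top-isotopy-concordance-all} will then follow by partitioning into homotopy classes, exactly as in the second half of the proof of Corollary~\ref{corollary:top-equal-smooth-sphere-isotopy-concordance}.

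For the vertical maps in~\eqref{equation:top-isotopy-concordance-R_0}, surjectivity is by general position: since $p_0$ has codimension $4$ in $M$, every sphere with $G$ as a dual is topologically isotopic to one missing $p_0$. For injectivity, an isotopy or concordance between spheres in $M\setminus p_0$ traces out a $3$-dimensional subset of $I\times M$, which by topological transversality~\cite[9.5]{Freedman-Quinn:1990-1} can be made transverse to the $1$-dimensional $I\times\{p_0\}$; the expected intersection dimension $3+1-5=-1$ being negative, the trace avoids $I\times\{p_0\}$, producing the required isotopy or concordance in $M\setminus p_0$. Commutativity of the diagram is immediate from Theorem~\ref{theorem:fq-top}, and the two ranges are identified via $\mu(\pi_3(M\setminus p_0))=\mu(\pi_3 M)$, the $\mu$-analog of Lemma~\ref{lemma:dax-homomorphism-on-embedded-3-sphere} based on the fact that the linking $3$-sphere of $p_0$ is embedded and hence has vanishing self-intersection.

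For the top row I would apply Corollary~\ref{corollary:top-equal-smooth-sphere-isotopy-concordance} to $M\setminus p_0$, whose hypothesis demands smooth $G$ and $R_0$. I would arrange this by \emph{choosing} the almost smooth structure on $M$ appropriately: a regular neighborhood $N$ of $G\cup R_0$ is homeomorphic by the topological normal bundle theorem~\cite[9.3]{Freedman-Quinn:1990-1} to the standard smooth plumbing of two disk bundles over $S^2$, and pulling back the plumbing's smooth structure makes $G$ and $R_0$ smooth in $N$; Freedman-Quinn's smoothing theory~\cite[Chapter~8]{Freedman-Quinn:1990-1} extends this to an almost smooth structure on $M$ with singular point $p_0\notin\overline N$. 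Because $\FQ^\top_M$ is defined as a topological invariant in Theorem~\ref{theorem:fq-top} independent of any choice of almost smooth structure, this substitution does not alter any set or map in~\eqref{equation:top-isotopy-concordance-R_0}. With it, Corollary~\ref{corollary:top-equal-smooth-sphere-isotopy-concordance} applied to $(M\setminus p_0, G, R_0)$ yields the desired bijections in the top row; bijectivity of the bottom row then follows by a diagram chase.

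The main obstacle is the smoothing extension just used: one must verify that the preferred smoothing on a neighborhood of $G\cup R_0$ actually extends to an almost smooth structure on $M$ with singular set a single point outside $\overline N$. This is a relative version of Freedman-Quinn's almost smoothing theorem and must be cited carefully. A backup route that avoids this extension would be to smooth $G$ and $R_0$ in $(M\setminus p_0)\#k(S^2\times S^2)$ via the Stable Smoothing Theorem~\ref{theorem:stable-smoothing} and then argue that topological concordance in the stabilization already implies topological concordance in $M\setminus p_0$; but justifying this descent appears no easier than the extension.
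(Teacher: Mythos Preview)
Your approach is essentially the paper's, and your explicit treatment of the smoothness of $G$ and $R_0$ is well taken: the paper's proof invokes Corollary~\ref{corollary:top-equal-smooth-sphere-isotopy-concordance} for $M\setminus p_0$ without comment, implicitly relying on the freedom to choose the almost smooth structure so that $G$ and $R_0$ are smooth. What you sketch for this is exactly Lemma~\ref{lemma:almost-smoothing-rel-surfaces}, so your ``main obstacle'' is already handled in the paper.

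One simplification worth noting: you do not need injectivity of the vertical maps. Once the top composite
\[
\cR^\top_{M\setminus p_0}(G,R_0)\to\cC^\top_{M\setminus p_0}(G,R_0)\xrightarrow{\FQ^\top_{M\setminus p_0}(R_0,-)}\bF_2 T_M/\mu(\pi_3 M)
\]
is bijective and the verticals are surjective, a short chase gives bijectivity of $\FQ^\top_M(R_0,-)$ directly (lift along the surjection and use injectivity upstairs), and then the verticals are forced to be bijective too. This is how the paper argues. It matters because your injectivity argument for the $\cR$-vertical has a gap: perturbing the trace of an isotopy in $M\times I$ to miss $\{p_0\}\times I$ produces an \emph{embedding}, hence a concordance in $M\setminus p_0$, but the perturbation need not preserve levels, so you do not directly recover an \emph{isotopy} in $M\setminus p_0$. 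The concordance vertical is fine by your argument; the isotopy vertical would need more care (e.g.\ an analog of Lemma~\ref{lemma:isotopy-avoiding-points} for closed surfaces). Sidestepping this by using only surjectivity is cleaner.
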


In particular, for spheres with a common geometric dual in a topological 4-manifold, concordance implies isotopy.
This is a topological category version of~\cite[Corollary~1.4]{Schneiderman-Teichner:2022-1}.

\begin{proof}
  The proof for~\eqref{equation:top-isotopy-concordance-R_0} is identical to that of Corollary~\ref{corollary:disk-top-isotopy-classification}:
  since $M\setminus p_0$ has a smooth structure, the top horizontal arrow and $\FQ^\top_{M\setminus p_0}(R_0,-)$ are bijective by Corollary~\ref{corollary:top-equal-smooth-sphere-isotopy-concordance}.
  The vertical maps are surjective since we can isotope a sphere to avoid~$p_0$.
  By commutativity, it follows that all maps in~\eqref{equation:top-isotopy-concordance-R_0} are bijective.
  For~\eqref{equation:top-isotopy-concordance-all}, use the partition argument that we previously used for Corollary~\ref{corollary:top-equal-smooth-disk-isotopy} and~\eqref{equation:top-smooth-isotopy-concordance-all}.
\end{proof}

\section{More applications in the topological category}
\label{section:more-applications}

In this section, we discuss more applications in the topological category, which generalize previously known smooth category results in the literature.

The following theorem states a topological category version of Gabai's smooth 4D light bulb theorem~\cite{Gabai:2020-1}.
It is readily obtained as a consequence of Corollary~\ref{corollary:sphere-in-top-4-manifold-isotopy-concordance}.

\begin{theorem}[Gabai's light bulb theorem in the topological category]
  \label{theorem:top-light-bulb-theorem}
  If $M$ is a topological 4-manifold such that $\pi_{1}(M)$ has no 2-torsion, then two spheres in $M$ with a common geometric dual are homotopic if and only if they are isotopic.
\end{theorem}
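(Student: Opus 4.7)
The plan is to deduce the theorem as an immediate corollary of Corollary~\ref{corollary:sphere-in-top-4-manifold-isotopy-concordance}, by observing that the 2-torsion-free hypothesis on $\pi_{1}(M)$ collapses the Freedman-Quinn target group to zero. The forward direction ``isotopic implies homotopic'' is definitional, so only the converse needs work.

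For the converse, let $R_{0}$ and $R_{1}$ be homotopic spheres in $M$ sharing a geometric dual sphere $G$. First I would apply Corollary~\ref{corollary:sphere-in-top-4-manifold-isotopy-concordance} with basepoint $R_{0}$: this yields a commutative diagram of bijections
\[
  \cR^\top_M(G,R_0)\xrightarrow{\;\;\approx\;\;}\cC^\top_M(G,R_0)\xrightarrow{\;\FQ^\top_M(R_0,-)\;}\bF_2 T_M/\mu(\pi_3(M)),
\]
and since $R_{1}$ is homotopic to $R_{0}$ and retains $G$ as a geometric dual, the class $[R_{1}]$ belongs to $\cR^\top_M(G,R_0)$.

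The key step is to note that $T_{M}$, the set of nontrivial elements of order~2 in $\pi_{1}(M)$, is empty under the hypothesis. Hence $\bF_{2}T_{M}=0$, the target of the composed bijection is the trivial group, and $\FQ^\top_M(R_0,R_1)=0$ holds automatically. By injectivity, $[R_{0}]=[R_{1}]$ in $\cR^\top_M(G,R_0)$, so $R_{0}$ and $R_{1}$ are topologically isotopic.

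There is essentially no obstacle in this derivation; the only convention to verify is that $T_{M}$ is taken to consist of \emph{nontrivial} 2-torsion elements (consistent with the role of the Freedman-Quinn invariant as a count of self-intersections weighted by nontrivial double-point loops). This is the convention underlying Theorem~\ref{theorem:fq-top} and Corollary~\ref{corollary:sphere-in-top-4-manifold-isotopy-concordance}, so the argument closes without further effort.
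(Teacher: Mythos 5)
Your proof is correct and follows exactly the route the paper itself indicates: the paper states that Theorem~\ref{theorem:top-light-bulb-theorem} is ``readily obtained as a consequence of Corollary~\ref{corollary:sphere-in-top-4-manifold-isotopy-concordance},'' and you have correctly filled in the short reasoning — namely that the 2-torsion-free hypothesis forces $T_M = \emptyset$, hence $\bF_2 T_M/\mu(\pi_3(M)) = 0$, so the bijection $\cR^\top_M(G,R_0)\to \bF_2 T_M/\mu(\pi_3(M))$ makes $\cR^\top_M(G,R_0)$ a single point. Your parenthetical check of the convention for $T_M$ (elements of order exactly~2, hence nontrivial) matches the paper's definition.
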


Let $\Diff_0(M)$ and $\Homeo_0(M)$ be the groups of diffeomorphisms and homeomorphisms on $M$ which are properly homotopic to the identity, respectively.
Here, a proper homotopy means a homotopy through self maps of the pair $(M,\partial M)$.
Gabai shows that $\pi_0(\Diff_0(S^2 \times D^2)) \cong \pi_0(\Diff_0(B^4))$ using his smooth light bulb theorem~\cite{Gabai:2020-1}.
In the topological category, using Theorem~\ref{theorem:top-light-bulb-theorem} instead of the smooth light bulb theorem, his argument is carried out to show $\pi_0(\Homeo_0(S^2 \times D^2)) \cong \pi_0(\Homeo_0(B^4))$.
Since $\pi_0(\Homeo_0(B^4))$ is trivial, we obtain the following:

\begin{corollary}
  \label{corollary:top-mapping-class-S^2xD^2}
  $\pi_0(\Homeo_0(S^2\times D^2)) = 0$.
\end{corollary}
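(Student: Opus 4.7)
The plan is to transcribe Gabai's proof of the smooth analog $\pi_0(\Diff_0(S^2\times D^2)) \cong \pi_0(\Diff_0(B^4))$ from~\cite{Gabai:2020-1} into the topological category, replacing the smooth 4-dimensional light bulb theorem with Theorem~\ref{theorem:top-light-bulb-theorem}. Since $\pi_0(\Homeo_0(B^4)) = 0$ follows from Alexander's trick (the cone construction $h_t(x) = t\, h(x/t)$ isotopes any boundary-fixing self-homeomorphism of $D^4$ to the identity), it suffices to establish the isomorphism $\pi_0(\Homeo_0(S^2\times D^2)) \cong \pi_0(\Homeo_0(B^4))$.

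Given $h \in \Homeo_0(S^2\times D^2)$, I would first arrange $h$ to restrict to the identity on $\partial(S^2\times D^2) = S^2\times S^1$, using that $h|_\partial$ is homotopic, hence isotopic by standard 3-manifold mapping class group results for $S^2\times S^1$, to the identity, together with a collar extension. Next, I would apply Theorem~\ref{theorem:top-light-bulb-theorem} to isotope $h(S^2\times\{0\})$ back to $S^2\times\{0\}$: the 2-torsion hypothesis is vacuous since $\pi_1(S^2\times D^2) = 1$, and a common geometric dual is supplied by passing to the capped-off ambient $S^2\times S^2$ in which $\{*\}\times S^2$ is the obvious dual, using that the Freedman--Quinn obstruction vanishes automatically in the simply-connected setting. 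Equivalently --- and more directly --- Corollary~\ref{corollary:disk-top-isotopy-classification} applies to the dual disk $D := \{*\}\times D^2$: the meridian of $\partial D = \{*\}\times S^1$ bounds a disk in $(S^2\setminus\{*\})\times\mathrm{pt} \subset \partial(S^2\times D^2)\setminus\partial D$, so the hypothesis holds, and the topological Dax invariant $\Dax^\top(D, h(D))$ vanishes because $\pi_1(S^2\times D^2) = 1$ forces $\Z[\pi_1\setminus 1] = 0$.

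After these isotopies, topological isotopy extension lets me assume $h$ fixes $D$ setwise; Alexander's trick applied to the restriction $h|_D$ lets me assume $h$ fixes $D$ pointwise; and topological uniqueness of tubular neighborhoods for locally flat 2-disks lets me assume $h$ is the identity on a tubular neighborhood $N(D) = D^2_\delta\times D^2$. The closure of the complement is $(S^2\setminus\inte D^2_\delta)\times D^2 \cong D^2\times D^2 = B^4$, on which $h$ restricts to a self-homeomorphism fixing $\partial B^4$ pointwise. Alexander's trick again yields an isotopy of this restriction to the identity rel $\partial$, which extends by the identity on $N(D)$ to an isotopy of $h$ to the identity in $\Homeo_0(S^2\times D^2)$.

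The main obstacle is producing the common geometric dual sphere needed to invoke Theorem~\ref{theorem:top-light-bulb-theorem} directly: this is automatic in the smooth category by transversality, but in the topological setting the cleanest route is to sidestep the sphere version by using Corollary~\ref{corollary:disk-top-isotopy-classification} for the dual disk, where the Dax obstruction vanishes automatically because $S^2\times D^2$ is simply connected. The remaining topological manipulations (isotopy extension, tubular neighborhood uniqueness, Alexander's trick) are standard tools in topological 4-manifold theory, and no new smoothing input beyond what is already established in earlier sections is required.
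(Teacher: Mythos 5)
Your proposal is correct and follows the same overall strategy that the paper intends: run Gabai's cut-along-the-dual-and-Alexander argument from \cite{Gabai:2020-1}, replacing his smooth light-bulb input with one of the paper's topological-category results, and finish with $\pi_0(\Homeo_0(B^4))=0$. The one substantive place where you diverge from the paper's stated proof is the choice of key lemma: the paper says to substitute Theorem~\ref{theorem:top-light-bulb-theorem} (the topological light bulb theorem for spheres), whereas you invoke Corollary~\ref{corollary:disk-top-isotopy-classification} (isotopy classification of disks via the topological Dax invariant) applied to the dual disk $D=\{*\}\times D^2$. Your reason is sound: $S^2\times\{0\}$ has no geometric dual \emph{sphere} inside $S^2\times D^2$ (any closed $G\subset S^2\times D^2$ has $[G]\cdot[S^2\times\{0\}]=0$, since $[S^2\times\{0\}]$ dies in $H_2(M,\partial M)$), so the hypotheses of Theorem~\ref{theorem:top-light-bulb-theorem} are not literally met for the pair $(S^2\times\{0\},\,h(S^2\times\{0\}))$; what Gabai actually uses is the version with a dual lying in the boundary, and Corollary~\ref{corollary:disk-top-isotopy-classification} is exactly the topological analog of that. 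Your check that the meridian hypothesis holds and that $\Z[\pi_1\setminus 1]^\sigma=0$ forces $\Dax^\top=0$ is correct. So your route is, if anything, a more precise reading of the paper's terse ``his argument is carried out,'' and the remaining ingredients you cite (Laudenbach-type homotopy-implies-isotopy for $S^2\times S^1$, topological isotopy extension, uniqueness of topological normal bundles for locally flat disks from \cite[Ch.~9]{Freedman-Quinn:1990-1}, and Alexander's trick on the complementary $D^2\times D^2$) are the same standard tools the paper implicitly defers to. One small point worth making explicit when you write this up: before applying Corollary~\ref{corollary:disk-top-isotopy-classification} you need $D$ and $h(D)$ to be homotopic \emph{rel}~$\partial$, not merely through maps of pairs; after normalizing $h|_\partial=\id$ this follows but deserves a sentence, as in Gabai's original argument.
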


\begin{remark}
  \label{remark:alternative-approach}
  \begin{enumerate}
    \item \label{item:alternative-approach-without-surface-smoothing}
    An alternative proof of Theorem~\ref{theorem:top-light-bulb-theorem} can be given without using our surface smoothing results.
    After isotoping $R_0$ and $R_1$, there exists an almost smooth structure of $M$ with respect to which all of $G$, $R_0$ and $R_1$ are smooth (see Lemma~\ref{lemma:almost-smoothing-rel-surfaces} below), so that one can apply Gabai's smooth light bulb theorem with respect to that smooth structure.
    \item In~\cite{Orson-Powell:2022-1}, Orson and Powell studied the topological mapping class group rel boundary (i.e.,\ homeomorphisms fixing $\partial$ modulo isotopy rel~$\partial$) for simply connected 4-manifolds.
    One can obtain an alternative proof of Corollary~\ref{corollary:top-mapping-class-S^2xD^2} by combining their work with the result of Gluck~\cite{Gluck:1962-1} that homotopic homeomorphisms of $S^2\times S^1$ are isotopic.
  \end{enumerate}
\end{remark}

\begin{lemma}
  \label{lemma:almost-smoothing-rel-surfaces}
  Let $R_0$, $R_1$, $\ldots\,$,~$R_n$ be surfaces in a topological 4-manifold~$M$.
  Then $R_1$, $\ldots\,$,~$R_n$ can be isotoped so that there is an almost smooth structure of $M$ with respect to which $R_0$, $\ldots\,$,~$R_n$ are smooth.
\end{lemma}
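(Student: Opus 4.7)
The plan is to build a closed neighborhood of $R = R_0\cup\cdots\cup R_n$ carrying a smooth structure that makes every $R_i$ smooth, and then to extend this locally defined smooth structure to an almost smooth structure on all of $M$ via relative smoothing theory for topological 4-manifolds.

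First, I would iteratively isotope $R_1,\ldots,R_n$ using topological transversality in dimension four \cite[Chapter~9]{Freedman-Quinn:1990-1} so that any two of the surfaces meet transversely at finitely many points and no three share a common point, and then equip each $R_i$ with a topological normal bundle $\nu_i$ via \cite[Section~9.3]{Freedman-Quinn:1990-1}. Using uniqueness of topological normal bundles together with the standard local model of two transverse 2-planes in $\R^4$, I would arrange that near each intersection point $p\in R_i\cap R_j$ a chart of $M$ identifies a neighborhood of $p$ with $\R^2\times \R^2$ in such a way that $R_i=\R^2\times 0$, $R_j=0\times \R^2$, $\nu_i$ is the second factor and $\nu_j$ is the first. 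The bundle structures on the $\nu_i$ over the smooth surfaces $R_i$, together with the standard plumbing model at each intersection, would then endow $N=\bigcup_i\nu_i$ with a canonical smooth structure in which every $R_i$ appears as the smooth zero section of $\nu_i$; at each plumbed point the two candidate smoothings coming from $\nu_i$ and $\nu_j$ agree by construction.

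To finish, I would apply a relative version of Quinn's existence theorem for almost smooth structures \cite[Theorem~8.2]{Freedman-Quinn:1990-1} to the topological 4-manifold $M' = \overline{M\setminus N}$. After smoothing $\partial M$ (uniquely possible for a 3-manifold) and adjusting the $\partial R_i$ to be smooth in this structure, the boundary $\partial M'$ carries a coherent smooth structure inherited from both $N$ and $\partial M$. The relative existence theorem would then produce an almost smooth structure on $M'$ extending this boundary structure, with its single singular point placed in the interior of $M'$ away from $N$. Gluing with the smooth structure of $N$ yields the desired almost smooth structure on $M$. The main obstacle I expect is pinning down the precise relative statement needed: every topological 4-manifold with a smooth structure given on a closed codimension-zero submanifold admits an almost smooth extension in which the Kirby--Siebenmann obstruction is killed by a single interior singular point placed in the complement. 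This is a standard consequence of Quinn's handle-smoothing machinery in dimension 4, but the exact reference inside \cite{Freedman-Quinn:1990-1} should be extracted with care.
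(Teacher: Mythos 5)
Your proposal matches the paper's proof almost exactly: both apply topological transversality iteratively to make the $R_i$ pairwise transverse without triple points, construct a plumbed tubular neighborhood $N$ of $\bigcup R_i$ with its canonical smooth structure, and then invoke \cite[Theorem~8.2]{Freedman-Quinn:1990-1} on the exterior $\overline{M\setminus N}$ to obtain an almost smooth structure compatible with $N$. The relative form of Quinn's existence theorem that you flag as needing care is in fact explicitly part of the statement of \cite[8.2]{Freedman-Quinn:1990-1} (smoothings can be chosen to agree with a given smooth structure on a neighborhood of a closed subset), so that concern is already resolved by the cited reference.
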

\begin{proof}
  Apply topological transversality~\cite[9.5]{Freedman-Quinn:1990-1} inductively to isotope $R_1$, $\ldots\,$,~$R_n$ so that $R_0$, $\ldots\,$,~$R_n$ are pairwise transverse without triple intersections.
  As in the proof of the claim in Section~\ref{section:proof-light-bulb-smoothing}, $R_0\cup\cdots\cup R_n$ has a plumbed tubular neighborhood~$N$.
  Choose an almost smooth structure of the exterior $\overline{M\setminus N}$ by using \cite[8.2]{Freedman-Quinn:1990-1} and extend it to $M$ using the standard smooth structure of~$N$.
\end{proof}

The approach in Remark~\ref{remark:alternative-approach}~\ref{item:alternative-approach-without-surface-smoothing} which uses Lemma~\ref{lemma:almost-smoothing-rel-surfaces} also proves topological versions of earlier smooth category results of Auckly, H. J. Kim, Melvin, Ruberman and Schwartz \cite{Auckly-Kim-Melvin-Ruberman-Schwartz:2019-1}
and Klug and Miller~\cite{Klug-Miller:2021-1}.
We state these below.

\begin{theorem}[Topological version of \cite{Auckly-Kim-Melvin-Ruberman-Schwartz:2019-1}]
  Let $M$ be a simply connected topological 4-manifold, and 
  $R_0$ and $R_1$ be surfaces of the same genus in $M$ which have simply connected complements and represent the same homology class $\alpha\in H_2(M)$.
  If $\alpha$ is not the dual of the Stiefel-Whitney class $w_2$, $R_0$ and $R_1$ are isotopic in $M\# (S^2\times S^2)$.
  Otherwise, $R_0$ and $R_1$ are isotopic in $M\# (S^2 \widetilde\times S^2)$.
\end{theorem}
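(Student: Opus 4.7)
The plan is to follow the approach outlined in Remark~\ref{remark:alternative-approach-without-surface-smoothing}: simultaneously make $R_0$ and $R_1$ smooth with respect to a common almost smooth structure of $M$, and then invoke the smooth category result of Auckly--Kim--Melvin--Ruberman--Schwartz on the complement of the singular point.

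First, I would apply Lemma~\ref{lemma:almost-smoothing-rel-surfaces} to the pair $(R_0, R_1)$: after an ambient isotopy of $R_1$, there exists an almost smooth structure on $M$ with singular point $p_0 \in \inte(M) \setminus (R_0 \cup R_1)$ such that both $R_0$ and $R_1$ are smooth submanifolds. Write $M^\circ := M \setminus p_0$, a smooth $4$-manifold in which $R_0, R_1$ are smooth surfaces. Next, I would verify that the hypotheses of the smooth version of the theorem are inherited by $M^\circ$: since $p_0$ is an interior point, the inclusion $M^\circ \hookrightarrow M$ induces isomorphisms on $\pi_1$, on $H_2$, and on the intersection form (and similarly on the complements of the $R_i$, by general position). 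Hence $M^\circ$ is simply connected, the complements $M^\circ \setminus R_i$ are simply connected, $[R_0] = [R_1] = \alpha$ in $H_2(M^\circ) \cong H_2(M)$, and the $w_2$-duality condition on $\alpha$ is unchanged.

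Second, I would apply the smooth theorem of \cite{Auckly-Kim-Melvin-Ruberman-Schwartz:2019-1} to the smooth surfaces $R_0, R_1 \subset M^\circ$ to obtain a smooth ambient isotopy of $M^\circ \# (S^2 \times S^2)$, or of $M^\circ \# (S^2 \widetilde\times S^2)$ when $\alpha$ is Poincar\'e dual to $w_2$, taking $R_0$ to $R_1$. Performing the connected sum at a point of $M$ distinct from $p_0$ and disjoint from $R_0 \cup R_1$, we have the identification $M^\circ \# (S^2 \times S^2) = (M \# (S^2 \times S^2)) \setminus p_0$. Extending the smooth isotopy by the identity across $p_0$ then yields a topological ambient isotopy of $M \# (S^2 \times S^2)$ (or the twisted version) carrying $R_0$ to $R_1$, completing the proof.

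The main technical point to address is that the smooth theorem of \cite{Auckly-Kim-Melvin-Ruberman-Schwartz:2019-1} is stated for closed simply connected $4$-manifolds, while $M^\circ$ has an end at $p_0$. The fix is that the ambient isotopy produced by the smooth construction can be arranged to be supported in a compact subset of $M^\circ$ disjoint from a chosen small ball neighborhood of $p_0$: the construction consists of a finite sequence of Whitney-type moves performed near the $R_i$, their Whitney disks, and the stabilizing $S^2 \times S^2$ (or $S^2 \widetilde\times S^2$) summand, all of which can be localized away from $p_0$. Under this localization the extension across $p_0$ is unproblematic, and the argument goes through.
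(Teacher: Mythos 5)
Your proposal follows exactly the route the paper indicates: the paper's entire ``proof'' of this theorem is the one-sentence pointer just before the statement, saying that the approach of Remark~\ref{remark:alternative-approach-without-surface-smoothing} via Lemma~\ref{lemma:almost-smoothing-rel-surfaces} applies, and you have fleshed out precisely that strategy (isotope $R_1$ so that both surfaces are smooth for a common almost smooth structure away from $p_0$, check that the hypotheses descend to $M\setminus p_0$, invoke the smooth AKMRS result, and extend the isotopy across $p_0$). So this is essentially the same argument, carried out in more detail than the paper gives.

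One remark on the technical point you flagged. Your instinct is right that one must justify applying AKMRS, which is stated for closed manifolds, to the open manifold $M^\circ=M\setminus p_0$, and that the eventual isotopy must be made compactly supported away from $p_0$ in order to extend. Be aware that the superficially cleaner-looking alternative of smoothly capping the end of $M^\circ$ with a $4$-ball to get a closed smooth $\widehat M$ homeomorphic to $M$ does \emph{not} work in general: if the smooth structure near the end were always a standard collar $S^3\times[0,\infty)$, every almost smooth closed $4$-manifold would be homeomorphic to a smooth one, contradicting e.g.\ the $E_8$-manifold. So one really does need an argument along the lines you gave, namely that the AKMRS construction (or the version of it for compact manifolds with boundary) can be carried out inside a compact smooth codimension-zero submanifold $W\subset M^\circ$ with $R_0\cup R_1\subset\inte W$, chosen large enough that $W$ and $W\setminus R_i$ are still simply connected and $H_2(W)\to H_2(M)$ is an isomorphism, with the resulting ambient isotopy compactly supported in $W\#(S^2\times S^2)$. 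Stated that way the extension across $p_0$ is immediate, and the proof is complete.
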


 \begin{theorem}[Topological version of \cite{Klug-Miller:2021-1}]
  Let $M$ be a topological 4-manifold and $R_{0}$ and $R_{1}$ be two homotopic spheres.
  If there is an immersed framed sphere which is geometrically dual to~$R_{0}$, then $R_{0}$ and $R_{1}$ are concordant if and only if $\FQ^\top_M(R_0,R_1)$ vanishes.
  \end{theorem}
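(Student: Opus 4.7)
The plan is to reduce to the smooth version of Klug--Miller's theorem via the almost smoothing technique used in Remark~\ref{remark:alternative-approach-without-surface-smoothing}. The only-if direction will be immediate from Theorem~\ref{theorem:fq-top}(1), so the substance lies in the if direction.

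Assuming $\FQ^\top_M(R_0,R_1)=0$, I will first apply Lemma~\ref{lemma:almost-smoothing-rel-surfaces} (after isotoping $R_1$ if necessary) to obtain an almost smooth structure on $M$ with a singular point $p_0\in\inte(M)\setminus(R_0\cup R_1)$, making both $R_0$ and $R_1$ smooth submanifolds of $M\setminus p_0$. Next, I would handle the immersed geometric dual $G$ of $R_0$ as follows. Since $G$ is a topological immersion of $S^2$ into a 4-manifold, a small local perturbation near the (finite) preimages of $p_0$ can isotope $G$ off $p_0$ while preserving its single transverse intersection with $R_0$. I would then smoothly approximate $G$ in the smooth 4-manifold $M\setminus p_0$, rel a small neighborhood of $G\cap R_0$ in which $G$ can be pre-arranged to be smooth using the smooth structure on $R_0$ and local flatness, to produce a smoothly immersed sphere $G'\subset M\setminus p_0$ geometrically dual to $R_0$.

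At this point $R_0$ and $R_1$ are smooth, $\pi_1$-null (since $\pi_1(M\setminus p_0)\cong\pi_1(M)$), and homotopic in $M\setminus p_0$ (a homotopy in $M$ may be perturbed off the point $p_0$ by topological transversality), with $R_0$ admitting the smooth immersed dual $G'$. By Theorem~\ref{theorem:fq-top}(2), $\FQ^\sm_{M\setminus p_0}(R_0,R_1)=\FQ^\top_{M\setminus p_0}(R_0,R_1)$, which should agree with $\FQ^\top_M(R_0,R_1)=0$ under the inclusion $M\setminus p_0\hookrightarrow M$. Applying the smooth Klug--Miller theorem in $M\setminus p_0$ will then yield a smooth concordance between $R_0$ and $R_1$, which is in particular a topological concordance in $M$.

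The main obstacle I expect is justifying the naturality $\FQ^\top_{M\setminus p_0}(R_0,R_1)=\FQ^\top_M(R_0,R_1)$ under the inclusion. This should be analogous to Lemma~\ref{lemma:dax-homomorphism-on-embedded-3-sphere}(2) for the Dax invariant: the kernel of $\pi_3(M\setminus p_0)\twoheadrightarrow\pi_3(M)$ is generated by the boundary 3-sphere of a 4-ball neighborhood of $p_0$, which is an embedded sphere, and $\mu$ vanishes on classes represented by embedded 3-spheres (as they contribute no double points). Hence the quotient $\bF_2 T_M/\mu(\pi_3(M))$ is the same whether computed from $M$ or from $M\setminus p_0$, and the construction of $\FQ^\top$ sketched in the appendix should respect this identification. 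With this naturality established, the reduction to the smooth theorem becomes routine.
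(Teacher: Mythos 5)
Your proposal is correct and follows essentially the same route the paper indicates (in Remark~\ref{remark:alternative-approach-without-surface-smoothing} and the sentence preceding the theorem): use Lemma~\ref{lemma:almost-smoothing-rel-surfaces} to find an almost smooth structure in which the data are smooth, then invoke the smooth Klug--Miller theorem. The two details you flag and fill in are exactly the ones the paper leaves implicit: your separate smoothing of the immersed dual $G$ by approximation rel a neighborhood of $G\cap R_0$ is a clean way to avoid extending Lemma~\ref{lemma:almost-smoothing-rel-surfaces} (which is stated only for embedded surfaces) to immersions, and the naturality $\FQ^\top_{M\setminus p_0}=\FQ^\top_M$ is justified by precisely the argument you anticipate --- the paper records this after diagram~\eqref{equation:top-isotopy-concordance-R_0}, noting $\mu(\pi_3(M\setminus p_0))=\mu(\pi_3(M))$ since $\mu$ vanishes on embedded spheres, as in Lemma~\ref{lemma:dax-homomorphism-on-embedded-3-sphere}.
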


\begin{remark} 
  It seems hard, if possible at all, to develop the Dax invariant for disks in topological 4-manifolds by an alternative approach similar to Remark~\ref{remark:alternative-approach}~\ref{item:alternative-approach-without-surface-smoothing}, i.e.,\ by choosing (almost) smooth structures cleverly without using our disk smoothing results.
  Even if such an alternative approach worked, the choice of an (almost) smooth structure would depend on the given topological disks.
  On the contrary, our development in Section~\ref{subsection:dax-for-top-disks} and~\ref{subsection:dax-in-top-4-manifold} uses a fixed (almost) smooth structure.
  This is essential in proving ``topological = smooth'' results in smooth 4-manifolds, e.g.,\ Corollaries~\ref{corollary:top-equal-smooth-disk-isotopy}, \ref{corollary:top-equal-smooth-sphere-isotopy} and~\ref{corollary:top-equal-smooth-concordance-isotopy}, since one has to stick to \emph{the} smooth structure of a given smooth 4-manifold.
\end{remark}

We finish this section with another example that illustrates the usefulness of the topological Dax invariant.
In the smooth category, Schwartz partially generalizes earlier results of Gabai~\cite{Gabai:2021-1} and Kosanovi\'{c}-Teichner~\cite{Kosanovic-Teichner:2024-1} by allowing a geometric dual not contained in the boundary of the ambient 4-manifold~\cite{Schwartz:2021-1}.
We show a topological version of her result.
To state it, let $M$ be a topological 4-manifold and $G$ be a framed sphere in~$M$.
Fix an almost smooth structure of $M$ with respect to which $G$ is smooth, to define $\Dax^\top_M(-,-)$ by applying Definition~\ref{definition:dax-in-top-4-manifold}.

\begin{corollary}
  \label{corollary:schwartz-top}
  Let $D_0$ and $D_1$ be topological disks with $G$ as a geometric dual, which admit a homotopy $D_0\simeq D_1$ rel $\partial$ supported away from~$G$.
  Suppose that the inclusion induces an isomorphism $\pi_1(M\setminus G) \to \pi_1(M)$ and the Dax homomorphism $d\colon \pi_3(M)\to \Z[\pi_1(M)\setminus 1]^\sigma$ is trivial so that $\Dax^\top_M(D_0,D_1)$ lies in $\Z[\pi_1(M)\setminus 1]^\sigma$.
  Then, $D_0$ and $D_1$ are topologically isotopic if and only if $\Dax^\top_M(D_0,D_1)$ vanishes.
\end{corollary}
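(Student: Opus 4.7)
The ``only if'' direction is immediate from the topological isotopy invariance of $\Dax^\top_M$ recorded in Theorem~\ref{theorem:dax-in-top-4-manifold-isotopy}, so the plan is to establish the converse. The strategy is to smooth $D_0$ and $D_1$ using the Light Bulb Smoothing Theorem, apply the smooth category version of the result due to Schwartz~\cite[Theorem~0.1]{Schwartz:2021-1}, and then translate back through the additivity of $\Dax^\top$.

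Let $p_0$ be the singular point of the fixed almost smooth structure, so that $G$ and a neighborhood of $G$ in $M$ are smooth. After a preliminary isotopy we may assume $p_0\notin D_0\cup D_1$, and treat $D_0,D_1$ as topological disks in the smooth $4$-manifold $M\setminus p_0$ having $G$ as a common smooth geometric dual. Using the refined form of the Light Bulb Smoothing Theorem given in Remark~\ref{remark:light-bulb-smoothing-restricted-support}, we produce smooth disks $D_0',D_1'\subset M\setminus p_0$ and topological isotopies $D_i\approx D_i'$ whose supports are disjoint from a prescribed neighborhood of~$G$ (and, after shrinking, from~$p_0$); in particular $G$ remains a smooth geometric dual for each~$D_i'$. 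Concatenating $D_0'\approx D_0$, the given homotopy $D_0\simeq D_1$ supported in $M\setminus G$, and $D_1\approx D_1'$, then perturbing generically to avoid $p_0$, we obtain a homotopy $D_0'\simeq D_1'$ rel $\partial$ in $(M\setminus p_0)\setminus G$.

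Schwartz's hypotheses transfer to the smooth 4-manifold $M\setminus p_0$ with no extra work: general position shows that removing an interior point in dimension four preserves $\pi_1$, so $\pi_1((M\setminus p_0)\setminus G)\to\pi_1(M\setminus p_0)$ remains an isomorphism, while Lemma~\ref{lemma:dax-homomorphism-on-embedded-3-sphere}(2) gives $d(\pi_3(M\setminus p_0))=d(\pi_3(M))=0$. Applying \cite[Theorem~0.1]{Schwartz:2021-1} to $D_0',D_1'$ and the homotopy constructed above shows that $D_0'$ and $D_1'$ are smoothly isotopic if and only if $\Dax^\sm_{M\setminus p_0}(D_0',D_1')=0$. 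Combining Definition~\ref{definition:dax-in-top-4-manifold}, the additivity and isotopy invariance of $\Dax^\top_M$ from Theorems~\ref{theorem:dax-in-top-4-manifold-properties} and~\ref{theorem:dax-in-top-4-manifold-isotopy}, and the identification $\Dax^\top=\Dax^\sm$ on smooth disks from Theorem~\ref{theorem:dax-top-properties}~\ref{item:dax-top-smooth}, we have
\[
  \Dax^\top_M(D_0,D_1)=\Dax^\top_M(D_0',D_1')=\Dax^\sm_{M\setminus p_0}(D_0',D_1').
\]
Thus vanishing of $\Dax^\top_M(D_0,D_1)$ yields a smooth, hence topological, isotopy $D_0'\approx D_1'$, which combined with the topological isotopies $D_0\approx D_0'$ and $D_1\approx D_1'$ produces the desired topological isotopy $D_0\approx D_1$ in~$M$.

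The one nontrivial point is that Schwartz's hypothesis demands a connecting homotopy disjoint from~$G$ \emph{after} smoothing. This is precisely what Remark~\ref{remark:light-bulb-smoothing-restricted-support} delivers, by allowing the smoothing isotopies themselves to be taken off~$G$; everything else is routine bookkeeping with the topological Dax invariant.
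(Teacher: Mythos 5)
Your proposal is correct and takes essentially the same route as the paper: smooth $D_0,D_1$ via the Light Bulb Smoothing Theorem, identify $\Dax^\top_M(D_0,D_1)$ with the smooth Dax invariant of the smoothings in $M\setminus p_0$, and invoke Schwartz's theorem. The paper's actual proof is terser and does not explicitly verify that the smoothings $D_i'$ still have $G$ as a geometric dual and admit a connecting homotopy supported away from $G$; your use of Remark~\ref{remark:light-bulb-smoothing-restricted-support} to address this, together with the check that the $\pi_1$ hypothesis transfers to $M\setminus p_0$, makes the same argument more careful without changing its structure.
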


\begin{proof}
  The only if direction is true by Theorem~\ref{theorem:dax-in-top-4-manifold-isotopy}.
  (This does not require the existence of a common geometric dual.)
  For the converse, suppose $\Dax^\top_M(D_0,D_1) = 0$. 
  Isotope $D_i$ to avoid the singular point $p_0$ of the almost smooth structure, and apply the Light Bulb Smoothing Theorem~\ref{theorem:light-bulb-smoothing} to obtain smoothings $D'_i$ of $D_i$ in $M\setminus p_0$.
  By Definition~\ref{definition:dax-in-top-4-manifold}, $\Dax^\sm_{M\setminus p_0}(D_0',D_1') = \Dax^\top_M(D_0,D_1) = 0$.
  Also, $d\colon \pi_1(M\setminus p_0) \to \Z[\pi_1(M) \setminus 1]^\sigma$ vanishes by Lemma~\ref{lemma:dax-homomorphism-on-embedded-3-sphere}.
  It follows that $D_0'$ and $D_1'$ are smoothly isotopic in $M\setminus p_0$, by the result of Schwartz~\cite{Schwartz:2021-1}.
  Therefore, $D_0$ and $D_1$ are isotopic in~$M$.
\end{proof}

\appendix
\def\sectionname{}\let\temp=\thesection\def\thesection{Appendix}
\section{High-dimensional smoothing, immersion and topological Freedman-Quinn invariant}
\label{section:smoothing-immersion-top-fq}
\let\thesection=\temp

In the smooth category, Schneiderman and Teichner provided a detailed treatment of the Freedman-Quinn invariant~\cite{Freedman-Quinn:1990-1,Stong:1993-1}, including its definition and basic properties in~\cite[Section~4]{Schneiderman-Teichner:2022-1}.
See also~\cite{Klug-Miller:2021-1,Klug-Miller:2022-1}.
The definition of the invariant is based on counting self-intersections of maps of 
$S^2\times I$ and $S^3$ into certain 6-manifolds, after perturbing involved maps to self-transverse maps.
The proof of the well-definedness depends on a similar perturbation to generic maps.
These arguments rely on standard results of transversality and singularity theory for smooth maps.

The main goal of this appendix is to provide a concrete treatment in the topological category.
It appears to be known to experts that the Freedman-Quinn invariant can also be defined in the topological category, but no details were given in the literature.
To fill in this omission, we describe technical details, especially how one can perturb involved maps to generic maps in the topological category, using topological immersion theory of Lees~\cite{Lees:1969-1} and high dimensional smoothing theory of Kirby and Siebenmann~\cite{Kirby-Siebenmann:1977-1}.

In this appendix, manifolds are topological unless stated otherwise.

\subsection{Generic maps and Freedman-Quinn invariant}
\label{subsection:generic-maps-fq-top}

We begin with the notion of a generic immersion.
Let $P$ be a 6-manifold and $Y$ be either $S^3$ or $S^2\times I$.
We say that a map $f\colon Y\to P$ is a \emph{generic immersion} if $f$ embeds $\partial Y$ into $\partial P$ and $f$ is a local embedding which fails to be one-to-one only at finitely many transverse double points.
When $P$ is smooth, a \emph{smooth generic immersion} $Y\to P$ is a local smooth embedding which is a topological generic immersion.
(The 3-manifold $Y$ is equipped with its unique smooth structure.)
It is known that if a smooth map $f\colon Y\to P$ is a smooth embedding on $\partial Y$, then $f$ is homotopic rel $\partial Y$ to a smooth generic immersion (e.g. see~\cite[Theorem~2.5]{Haefliger:1961-1}).
The following is a topological analog for our case.

\begin{lemma}
  \label{lemma:homotopy-to-generic-immersion}
  A map $f\colon Y \to P$ that embeds $\partial Y$ into~$\partial P$ is homotopic rel $\partial Y$ to a generic immersion $g\colon Y\to P$.
  In fact, there is a smooth structure on a neighborhood $W$ of $g(Y)$ in $P$ such that $g\colon Y\to W$ is a smooth generic immersion.
  If $f$ is based, the homotopy can be assumed to be based.
\end{lemma}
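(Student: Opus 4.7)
The plan is to reduce the construction to the smooth category via Lees' topological immersion theorem and Kirby-Siebenmann smoothing theory, and then apply standard smooth differential topology inside a smoothed neighborhood of the image. Throughout, the closed case $Y=S^3$ reduces to the bounded case by puncturing at the basepoint.

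First I would apply the relative form of Lees' topological immersion theorem~\cite{Lees:1969-1} to homotope $f$ rel $\partial Y$ to a topological immersion $f_1\colon Y\to P$. After arranging $f$ to be in product form on collars of $\partial Y$ and $\partial P$, the existence of $f_1$ reduces to the existence of a bundle monomorphism $TY\to TP$ covering $f$ and extending the given one on $\partial Y$. Since $Y$ is parallelizable (both $S^3$ and $S^2\times I$ have trivial tangent bundle), the relevant section space has fiber the Stiefel manifold $V_3(\R^6)$, which is $2$-connected. The obstructions therefore live in $H^{k+1}(Y,\partial Y;\pi_k(V_3(\R^6)))$ for $k\ge 3$ and vanish automatically because $\dim Y=3$.

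Second I would use Kirby-Siebenmann smoothing theory~\cite{Kirby-Siebenmann:1977-1} to place a smooth structure on an open neighborhood $W\subset P$ of $f_1(Y)$, extending the given smooth structure on a neighborhood of $f_1(\partial Y)$ in $\partial P$. Since $f_1$ is a topological immersion, $W$ deformation retracts onto the $3$-complex $f_1(Y)$; the Kirby-Siebenmann obstruction for relative smoothing in $H^4(W,W\cap\partial P;\Z/2)$ thus vanishes, and the concordance extension theorem supplies the desired smooth structure. With the smooth structure on $W$, I would then promote $f_1\colon Y\to W$ to a smooth immersion $f_2\colon Y\to W$ homotopic to $f_1$ rel $\partial Y$ via the smooth Smale-Hirsch immersion theorem. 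The topological bundle monomorphism $TY\to TW$ induced by $f_1$ is upgraded to a smooth one provided the obstructions in $H^*(Y,\partial Y;\pi_{*-1}(\TOP/\mathrm{O}))$ vanish; these do vanish because $\pi_i(\TOP/\mathrm{O})=0$ for $i\le 2$, $\pi_3(\TOP/\mathrm{O})=\Z/2$, and $H^4(Y,\partial Y;\Z/2)=0$ by the dimension bound on $Y$.

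Finally I would apply classical smooth general position (as in Theorem~$2.5$ of~\cite{Haefliger:1961-1}) to perturb $f_2$ rel $\partial Y$ to a smooth generic immersion $g\colon Y\to W\subset P$; this $g$ is a topological generic immersion, and $W$ is the required smooth neighborhood. The main technical obstacle I anticipate is the bookkeeping of relative boundary data: each of Lees', Kirby-Siebenmann's, and Smale-Hirsch's theorems has to be invoked in its rel-boundary form, and the smoothings and homotopies in each step must be made to restrict compatibly to the given embedding on $\partial Y$ and the given smooth structure on a neighborhood of $f(\partial Y)$ in $\partial P$. In the based case $Y=S^3$, the role of $\partial Y$ is played by a small disk around the basepoint, and the based homotopy assertion follows by the same argument.
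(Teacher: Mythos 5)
Your proposal takes a genuinely different route from the paper, and as written it has gaps at two of the three main steps.  The paper avoids positive-codimension topological immersion theory entirely: it first constructs a $3$-plane bundle $\xi$ over $Y$ and a \emph{codimension-zero} bundle map $\tau_{E(\xi)}\to\tau_P$ covering an extension of~$f$ (this is the content of the Claim in the proof, proved by computing $\pi_3(B\TOP_6)=0$ for $Y=S^3$ and by extending the boundary normal bundle for $Y=S^2\times I$), then applies the equidimensional topological immersion theorem of Lees/Lashof (Theorem~\ref{theorem:top-immersion}) to the $6$-manifold $E(\xi)\to P$, and finishes with the Kirby--Siebenmann codimension-zero smoothing result (Theorem~\ref{theorem:ks-smoothing}) applied to the $3$-submanifold $Y\subset E(\xi)$.

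Your first step applies ``Lees' topological immersion theorem'' directly in codimension~$3$.  The theorem you cite, and the version quoted in this paper as Theorem~\ref{theorem:top-immersion}, is the equidimensional (codimension-zero) statement; a positive-codimension topological immersion theorem is a separate and more delicate matter (one needs a classifying space for topological microbundle monomorphisms $\R^3\hookrightarrow\R^6$, and the normal microbundle theory that underlies it).  Moreover, your obstruction count uses the connectivity of the \emph{smooth} Stiefel manifold $V_3(\R^6)$; for topological microbundle monomorphisms the relevant fiber is not $V_3(\R^6)$ but something on the order of $\TOP_6/\TOP_{6,3}$, whose connectivity is exactly the nonstandard input you would have to supply.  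The third step has a similar problem: $\TOP/\mathrm{O}$ measures the difference between topological and smooth \emph{structures on manifolds}, not the difference between topological microbundle monomorphisms and smooth bundle monomorphisms in codimension~$3$, so the obstruction group you write down is not the correct one for upgrading $f_1$ to a Smale--Hirsch input.  Your middle step (smoothing a regular neighborhood $W$ of $f_1(Y)$, using that the KS obstruction lives in $H^4$ of a homotopy-$3$-dimensional space) is sound modulo the existence of such a regular neighborhood for a locally flat topological immersion.  In short, the thickening-to-codimension-zero trick in the paper is precisely what lets one quote only off-the-shelf codimension-zero statements; your direct approach would require a positive-codimension immersion theory and a different obstruction-theoretic analysis than the one you give.
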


Raising the dimensions by one, consider a map $F\colon X \to Q$, where $Q$ is a 7-manifold and $X=S^3\times I$ or $S^2\times I^2$ (i.e.\ $X=Y\times I$ where $Y$ is as above).
Similarly to the above, if $Q$ is smooth and $F\colon X\to Q$ is a smooth map that restricts to a smooth generic immersion $\partial X \to \partial Q$, then $F$ is homotopic rel $\partial X$ to a smooth generic map (e.g.\ see~\cite[Theorem~2.5]{Haefliger:1961-1}).
Here, $F\colon X\to P$ is a \emph{smooth generic map} if $F$ is a smooth immersion except at a finite set $S\subset \inte(X)$ of cusp singularities, $F$ has no triple points, double points of $F$ are not in $F(S)$, and $F$ is self-transverse at each double point.
Lemma~\ref{lemma:homotopy-to-generic-map} below is a topological analog.
In the topological category, we say that $F\colon X\to Q$ is a \emph{generic map} if $F$ satisfies the local properties of a smooth generic map.

\begin{lemma}
  \label{lemma:homotopy-to-generic-map}
  Suppose that $F\colon X \to Q$ is a map such that $F|_{\partial X}\colon \partial X \to \partial Q$ is a generic immersion.
  Then $F$ is homotopic rel $\partial X$ to a generic map $G\colon X\to Q$.
  In fact, there is a smooth structure on a neighborhood $W$ of $G(X)$ in $Q$ such that $G\colon X\to W$ is a smooth generic immersion.
\end{lemma}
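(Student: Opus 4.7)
The plan is to reduce to the smooth category by equipping a neighborhood of the (eventual) image of $F$ with a suitable smooth structure, then appeal to Haefliger's smooth generic map theorem inside that neighborhood, exactly parallel to the proof of Lemma~\ref{lemma:homotopy-to-generic-immersion} one dimension up.

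First, I would apply Lemma~\ref{lemma:homotopy-to-generic-immersion} to the generic immersion $F|_{\partial X}\colon \partial X\to \partial Q$ to obtain a smooth structure on a neighborhood of $F(\partial X)$ in $\partial Q$ with respect to which $F|_{\partial X}$ is a smooth generic immersion. Using a collar of $\partial Q$ in $Q$, I would extend this to a smooth structure on a neighborhood $U$ of $F(\partial X)$ in $Q$, and homotope $F$ rel $\partial X$ so that it agrees with a product structure and is smooth into $U$ on a small collar $\partial X\times[0,\epsilon)$.

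Next, in the interior I would invoke the topological immersion theorem of Lees~\cite{Lees:1969-1} rel the boundary collar to homotope $F$ further to a topological immersion $G_0\colon X\to Q$. The required formal data---a bundle monomorphism $TX\to G_0^*TQ$ extending the one already present on the collar---exists because $\dim Q-\dim X=3>0$ and the classifying obstructions over a 4-complex vanish. Transporting the standard smooth structure of $X$ locally through $G_0$ gives candidate smooth charts along $G_0(X)$ in $Q$, and I would apply the Kirby-Siebenmann smoothing theorem~\cite{Kirby-Siebenmann:1977-1} (valid since $\dim Q=7\geq 5$) to patch these into a genuine smooth structure on a regular neighborhood $W$ of $G_0(X)$ in $Q$ coinciding with $U$ near $\partial Q$. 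Finally, within the smooth $W$, I would apply Haefliger's smooth generic map theorem~\cite[Theorem~2.5]{Haefliger:1961-1} rel $\partial X$ to perturb $G_0$ to a smooth generic map $G\colon X\to W$, which is then the desired topological generic map into $Q$.

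The hardest step will be the Kirby-Siebenmann smoothing: it must be applied relative to the boundary smoothing from the first step and compatibly with the local charts inherited from $G_0$, in particular reconciling the two candidate smooth structures arising from the two sheets of $G_0(X)$ meeting at each transverse double point. This is precisely where the high dimension of $Q$ enters, ensuring that the relevant smoothing obstructions in $H^{i+1}(W;\pi_i(\TOP/O))$ can be controlled using the available local smooth models.
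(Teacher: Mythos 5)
Your three-ingredient outline (Lees' immersion theory, then Kirby--Siebenmann smoothing, then Haefliger's generic-map theorem inside the smoothed neighborhood) matches the spirit of the paper's argument, but there is a genuine gap in the middle that the paper handles and you do not: you never construct the $7$-dimensional ambient object on which Lees and Kirby--Siebenmann are actually applied.

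Concretely, the paper's proof (following the template of Lemma~\ref{lemma:homotopy-to-generic-immersion}, Case~2) first extends the pulled-back normal $3$-plane bundle $\nu$ of $F(\partial X)\subset\partial Q$ to a $3$-plane vector bundle $\xi$ over $X$ via obstruction theory (this is the entire content of the paper's proof of Lemma~\ref{lemma:homotopy-to-generic-map}: the obstructions live in $H^k(X,\partial X;\pi_{k-1}(BSO_3))$ and vanish because $(X,\partial X)$ has only cells in degrees where $\pi_{k-1}(BSO_3)=0$). It then works with the $7$-manifold $V=E(\xi)$ and a covering bundle map $\tau_V\to\tau_Q$, applies Lees' theorem in the \emph{equal-dimensional} form (Theorem~\ref{theorem:top-immersion}), and applies the Kirby--Siebenmann result (Theorem~\ref{theorem:ks-smoothing}), which is likewise a statement about a codimension-zero immersion $g\colon V\to Q$: it regularly homotopes $g$ so that a neighborhood $g(V_Y)$ of $g(Y)$ carries a smooth structure making $g$ a smooth immersion there. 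By contrast, you apply Lees directly to $F\colon X\to Q$ in codimension $3$ and then claim that ``transporting the standard smooth structure of $X$ locally through $G_0$'' produces candidate smooth charts along $G_0(X)$ in $Q$. That cannot work as stated: a chart of a $4$-dimensional source gives no $7$-dimensional chart of $Q$; one must supply the normal directions, which is exactly the data of a (topological) normal bundle of $G_0(X)$. Your ``hardest step'' paragraph flags this as delicate but does not supply the missing construction. The clean way to fill the gap is precisely the paper's: build the vector bundle $\xi$ over $X$ first (which also requires the obstruction-theory computation you do not carry out), so that $E(\xi)\to Q$ is a codimension-zero map and the smoothing theorem applies meaningfully; the generic-map perturbation at the end then takes place inside the resulting smooth neighborhood, as you say.
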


The proofs of Lemmas~\ref{lemma:homotopy-to-generic-immersion} and~\ref{lemma:homotopy-to-generic-map} are given in Section~\ref{subsection:immersion-smoothing-generic-maps} of this appendix.

Once we have Lemmas~\ref{lemma:homotopy-to-generic-immersion} and~\ref{lemma:homotopy-to-generic-map}, we can carry out the smooth category development of the Freedman-Quinn invariant~\cite[Section~4]{Schneiderman-Teichner:2022-1}, without essential changes, in the topological category.
For the reader's convenience, we outline it below.

Let $Y=S^3$ or $S^2\times I$ and $P$ be a topological 6-manifold, as above.
Fix basepoints, and let $f\colon Y\to P$ be a based map that embeds $\partial Y$ into~$\partial P$.
By Lemma~\ref{lemma:homotopy-to-generic-immersion}, $f$ is based homotopic rel $\partial Y$ to a based generic immersion $f'\colon Y\to P$.
For a double point $p$ of $f'$, the sign $\epsilon(p)\in \{1,-1\}$ and the $\pi_1$ element $g(p)\in \pi=\pi_1(P)$ is defined as usual.
Switching the involved sheets changes $\epsilon(p)g(p)$ to $-\epsilon(p)g(p)^{-1}$, so the self-intersection invariant is defined as follows:
\[
  \mu(f) = \sum_p \epsilon(p)g(p) \in \Z[\pi]/\langle 1,g+g^{-1}\rangle.
\]

In the smooth category, it is known that $\mu(f)$ is invariant under based homotopy of $f$ rel~$\partial Y$.
More generally, $\mu(f_0)=\mu(f_1)$ if $f_0$, $f_1\colon Y\to P$ are smooth generic immersions into a smooth 6-manifold $P$ and if there is a map $F\colon Y\times I\to P\times I$ such that $F|_{Y\times i} = f_i\times i$ ($i=0,1$) and $F|_{\partial Y\times I} = f_0|_{\partial Y}\times\id$.
A sketch of a proof is as follows.
We may assume that $F$ is a smooth generic map.
Then double points of $F$ form a 1-manifold bounded by double points of $F|_{\partial(Y\times I)}$ and cusp singular points of~$F$, so these points are paired up by arc components.
If an arc component $\beta$ joins two double points $p_1$ and $p_2$, $\epsilon(p_1)g(p_1) = \epsilon(p_2)g(p_2)$.
If $\beta$ joins a double point $p$ and a cusp, $g(p)$ vanishes since a double point loop near a cusp is null-homotopic.
It follows that $\mu(f_1)-\mu(f_0)=\mu(F|_{\partial(Y\times I)})=0$ in $\Z[\pi]/\langle 1,g+g^{-1}\rangle$.

In the topological category, Lemma~\ref{lemma:homotopy-to-generic-map} ensures that $\mu(f)$ is invariant under based homotopy of~$f$.
For a topological 6-manifold $P$, a map $F\colon Y\times I \to P\times I$ as above is homotopic to a (topological) generic map by Lemma~\ref{lemma:homotopy-to-generic-map}, so that the above argument can be applied.

Similarly, for two maps $f_0$, $f_1\colon Y\to P$ which embeds $\partial Y$ into~$\partial P$, the intersection invariant $\lambda(f_0,f_1)\in \Z[\pi]$ can defined in the topological category.
Homotope $f_i$ to a generic immersion rel $\partial$ using Lemma~\ref{lemma:homotopy-to-generic-immersion}, isotope $f_i$ to assume that double points of $f_i$ are not in $f_{1-i}(Y)$, and apply topological transversality~\cite[9.5]{Freedman-Quinn:1990-1} to the submanifolds $f_i(Y)\setminus\{$double points$\}$ to make $f_0$ and $f_1$ transverse.
Count transverse intersections $p$ of $f_0(Y)$ and $f_1(Y)$ with signs $\epsilon(p)$ and associated $\pi_1$ elements $g(p)$, to define $\lambda(f_0,f_1)=\sum_p \epsilon(p)g(p) \in \Z[\pi]$.
The above equating argument that uses Lemma~\ref{lemma:homotopy-to-generic-map} shows that $\lambda(f_0,f_1)$ is invariant under homotopy of $f_i$ rel~$\partial Y$ in the topological category.

When $P=M\times \R\times I$ where $M$ is a 4-manifold, the value of $\mu(f)$ lies in the subgroup $\bF_2 T_M \subset \Z[\pi_1(M)] / \langle 1,g+g^{-1}\rangle$, where $\bF_2 T_M$ is the vector space over $\bF_2$ generated by the set $T_M$ of 2-torsion elements in $\pi_1(M)$, by~\cite[Lemma~4.1]{Schneiderman-Teichner:2022-1}.
Especially, when $Y=S^3$, a self-intersection invariant $\mu\colon \pi_3(M)\to \bF_2 T_M$ is defined by $\mu([f])=\mu(f)$ where $f\colon S^3\to M\times\R\times I$ is a based generic immersion representative of a given element $[f]\in \pi_3(M)$.
By~\cite[Lemma~4.1]{Schneiderman-Teichner:2022-1}, $\mu$ is a homomorphism.

Let $R_0$ and $R_1$ be homotopic spheres embedded in a 4-manifold~$M$, which have a common geometric dual~$G$.
Suppose $R_0$ and $R_1$ agree near~$G$.
Let $H\colon S^2\times I \to M\times\R\times I$ be a map which restricts to an embedding $S^2\times i \hookrightarrow M\times 0\times i$ onto $R_i=R_i\times 0\times i$ for $i=0,1$.
(For instance, the track of a homotopy $R_0\simeq R_1$ is such a map.)
By the above, $\mu(H)$ is invariant under homotopy of~$H$ rel $\partial$.
Now, the \emph{Freedman-Quinn invariant} for $(R_0,R_1)$ is defined by $\FQ(R_0,R_1)=\mu(H)\in \bF_2 T_M / \mu(\pi_3(M))$.
It is well-defined, independent of the choice of~$H$, by the arguments in~\cite[Lemma~4.5]{Schneiderman-Teichner:2022-1}, \cite[Proposition~4.9]{Klug-Miller:2021-1} and \cite[Proposition~3.3]{Klug-Miller:2022-1}.

We remark that the topological version of the intersection invariant $\lambda$ defined above is needed to carry out the arguments of~\cite[Lemmas~4.1 and 4.5]{Schneiderman-Teichner:2022-1} in the topological category.

When $R_0$ and $R_1$ are concordant, we can compute $\FQ(R_0,R_1)$ using a concordance between $R_0$ and $R_1$, which is an embedding $H\colon S^2\times I \to M\times\R\times I$.
Obviously, an embedding has vanishing~$\mu$.
Therefore $\FQ(R_0,R_1) = \mu(H) = 0$ if $R_0$ and $R_1$ are concordant.
It is readily seen from the definition that the additivity $\FQ(R_0,R_2)=\FQ(R_0,R_1)+\FQ(R_1,R_2)$ holds.
Thus, it follows that $\FQ(R_0,R_1)$ is determined by the concordance classes of $R_0$ and $R_1$.

This shows that Theorem~\ref{theorem:fq-top} in the body of this paper holds.

\subsection{Immersions, smoothing and proofs of Lemmas~\ref{lemma:homotopy-to-generic-immersion} and~\ref{lemma:homotopy-to-generic-map}}
\label{subsection:immersion-smoothing-generic-maps}

The proofs of Lemmas~\ref{lemma:homotopy-to-generic-immersion} and~\ref{lemma:homotopy-to-generic-map} given below use Lee's immersion theorem in~\cite{Lees:1969-1} (see also \cite{Lashof:1969-1,Lashof:1971-1}).
For our purpose, it suffices to use the following consequence of the immersion theorem.
For a map $f\colon V\to P$ between manifolds, denote by $df \colon \tau_V \to \tau_P$ the induced bundle map on the tangent (micro)bundles.

\begin{theorem}[{A consequence of \cite[Immersion Theorem, p.~189]{Lashof:1971-1}}]
  \label{theorem:top-immersion}
  Let $V$ and $P$ be $n$-manifolds without boundary, $C\subset V$ be closed, and $h\colon U\to P$ be an immersion of a neighborhood $U$ of~$C$.
  If there is a bundle map $\phi\colon \tau_V \to \tau_P$ such that $\phi=dh$ near~$C$, then there exists an immersion $g_1\colon V\to P$ such that $g_1=h$ near $C$ and $dg_1$ and $\phi$ are homotopic as bundle maps.
  (Consequently, $g_1$ is homotopic to the map $V\to P$ induced by~$\phi$.)
\end{theorem}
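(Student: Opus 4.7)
The plan is to deduce Theorem~\ref{theorem:top-immersion} directly from the relative parametrized form of Lees-Lashof's topological immersion theorem in~\cite[p.~189]{Lashof:1971-1}. For $n$-manifolds $V$, $P$ without boundary and any closed subset $A \subset V$, that theorem asserts that the derivative map
\[
  d\colon \mathrm{Imm}(V,P;\, h_0 \text{ near } A) \longrightarrow \mathrm{Bun}(\tau_V,\tau_P;\, dh_0 \text{ near } A)
\]
is a weak homotopy equivalence, where the subscripts denote the subspaces of immersions (respectively microbundle maps) agreeing with the prescribed germ $h_0$ (respectively $dh_0$) on some neighborhood of~$A$.

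First, I would shrink the given data: choose open neighborhoods of $C$ with $C \subset U_0 \subset \overline{U_0} \subset U_1 \subset \overline{U_1} \subset U$. By hypothesis, $\phi = dh$ holds on some neighborhood of~$C$, so after shrinking I may assume that the equality holds throughout~$U_1$. Next, I would apply the theorem above with $A := \overline{U_0}$ and prescribed germ given by $h$ on~$U_1$. The given bundle map $\phi$ supplies a distinguished point of the target space of microbundle maps. By the $\pi_0$-surjectivity of the weak equivalence~$d$, there exists an immersion $g_1\colon V \to P$ in the source that maps to the path-component of~$\phi$. This $g_1$ agrees with $h$ on a neighborhood of~$\overline{U_0}$, hence near~$C$, and $dg_1$ is homotopic to~$\phi$ through microbundle maps that are fixed near~$\overline{U_0}$, which is exactly the conclusion of the theorem.

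The parenthetical statement that $g_1$ is then homotopic to the base map induced by~$\phi$ is immediate, since any homotopy of microbundle maps descends to a homotopy of underlying base maps. The only step needing verification is that the relative parametrized form of the immersion theorem stated above is a formal consequence of the version given in~\cite{Lashof:1971-1}; this is a routine fibration-sequence manipulation applied to the evaluation map that extracts the germ of a section near~$A$, and it introduces no additional geometric content. Consequently the main obstacle is purely expository, namely to confirm that Lashof's formulation can be packaged into the relative form used here; the substantive immersion-theoretic input is already present in the cited theorem.
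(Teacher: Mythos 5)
The paper gives no proof of Theorem~\ref{theorem:top-immersion}; it is stated as a consequence of Lashof's Immersion Theorem, so you are filling in a deliberate gap. Your deduction via $\pi_0$-surjectivity of the relative immersion-theoretic weak homotopy equivalence is the standard and correct way to obtain the statement, and it matches the intent of the citation. Two small cautions. First, both the paper's formulation and your invocation of Lashof's theorem omit the hypothesis, essential in the equidimensional (codimension-zero) case, that $V$ must be open (have no closed component); without it the differential map $\mathrm{Imm}(V,P)\to\mathrm{Mono}(\tau_V,\tau_P)$ is not a weak equivalence. This hypothesis is implicitly present in the paper's applications, where $V$ is the interior of a vector-bundle total space over a compact 3- or 4-manifold and hence open, but it should be carried along if one wants the statement to be literally correct. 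Second, your last paragraph treats the relative form of the immersion theorem as something to be extracted from an absolute form by a fibration-sequence argument; in fact Lees' theorem and Lashof's reformulation are proved by handle-by-handle induction and are intrinsically relative, so the relative statement you need is available directly and the fibration maneuver you describe is unnecessary (and, as sketched, is not quite a formal consequence without further care about germ spaces and restriction fibrations). Neither of these caveats affects the validity of your argument as applied in the paper.
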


We will also use the following high-dimensional smoothing theorem of Kirby and Siebenmann~\cite{Kirby-Siebenmann:1977-1}.
For a map $g\colon V\to W$ from a smooth manifold $V$ to a (topological) manifold~$W$, we say that $g$ is \emph{weakly smoothly self-transverse on $C$} if there is a neighborhood $V_C$ of $C$ in $V$ and a smooth structure on $g(V_C)$ such that $g|_{V_c}\colon V_C\to g(V_C)$ is a smooth immersion and $g|_C$ is smoothly self-transverse.

\begin{theorem}[{\cite[Essay III, C.6]{Kirby-Siebenmann:1977-1}}]
  \label{theorem:ks-smoothing}
  Let $V$ and $W$ be $n$-manifolds, $V$ smooth, $g\colon V\to W$ an immersion, $M\subset V$ a compact smooth $m$-submanifold such that $g|_M$ is proper, and $C\subset M$ a closed subset on which $g$ is weakly smoothly self-transverse.
  Suppose $n\ge 5$ and $2(m-1) \le n$.
  Then there is an arbitrarily small regular homotopy of $g$ to $g_1\colon V\to C$ rel $C$ such that $g_1$ is weakly smoothly self-transverse on~$M$.
\end{theorem}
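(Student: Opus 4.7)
The plan is to reduce to classical smooth transversality after first extending the partial smooth structure near $C$ to a smooth structure on a neighborhood of $g(M)$ in $W$, so that $g$ becomes a smooth immersion on a neighborhood of $M$. With that smoothing in hand, the dimension hypothesis $2(m-1)\le n$ is precisely the generic-position condition needed to smoothly perturb $g|_M$ into a weakly smoothly self-transverse immersion.

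First, since $g$ is a topological immersion, I would cover $M\setminus V_C$ by finitely many small charts $U_\alpha\subset V$ on which $g$ restricts to a topological embedding, and inductively smooth $g(U_\alpha)$ in $W$ compatibly with the smoothings already fixed near $C$ and on previously smoothed charts. The tools here are Kirby--Siebenmann's product structure theorem and handle straightening, which in dimension $n\ge 5$ allow one to extend a smooth structure across a handle relative to a closed subset (up to concordance). Running this inductively over a fine handle decomposition of a neighborhood of $M$ in $V$, relative to $V_C$, yields a smoothing of a neighborhood $W_M\subset W$ of $g(M)$ such that $g$ is a smooth immersion on its preimage $V_M$. A standard smooth transversality argument applied to the smooth immersion $g|_M\colon M\to W_M$ rel $C$ then produces a $C^0$-small smooth regular homotopy to a weakly smoothly self-transverse map, which one spreads to a regular homotopy of $g$ on $V$ using Lees's Theorem~\ref{theorem:top-immersion} and a tubular neighborhood of $M$ in $V$.

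The main obstacle is quantitative: making the regular homotopy \emph{arbitrarily small}. The concordances of smoothings produced by Kirby--Siebenmann are not a priori $C^0$-small when viewed as families of maps into $W$, so one must work with very fine handle decompositions and exploit local contractibility of the space of smoothings in the fine topology to arrange small concordances. Converting a small concordance of smoothings into a small regular homotopy of immersions then requires a quantitative form of Lees's theorem. Simultaneously, the construction must be the identity in a neighborhood of $C$ throughout, so that the weak smooth self-transversality already given there is preserved. This careful quantitative bookkeeping, rather than the existence of any single ingredient, is the technical heart of the proof.
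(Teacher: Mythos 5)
The paper does not prove Theorem~\ref{theorem:ks-smoothing}: it is quoted verbatim from Kirby--Siebenmann~\cite[Essay III, C.6]{Kirby-Siebenmann:1977-1} and treated as an external black box, with no accompanying proof anywhere in the text (the same is true of its companion Theorem~\ref{theorem:top-immersion}). So there is no argument of the authors against which to compare your sketch; you have set out to prove a result the paper explicitly declines to re-prove.

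Taking your reconstruction on its own terms, the ingredients you reach for are the right ones in flavor: handle-by-handle smoothing via the Product Structure Theorem, smooth self-transversality once a smooth structure is in place, Lees' immersion theorem to propagate local adjustments to a regular homotopy of $g$ on all of $V$, and attention to the smallness of the resulting regular homotopy. However, the architecture of \emph{first} fixing a smooth structure on a neighborhood $W_M$ of $g(M)$ so that $g$ becomes a smooth immersion there, and \emph{only then} perturbing $g|_M$ to self-transversality, cannot work as two clean sequential global steps. Near a topological double point of $g$ in a neighborhood $V_M$ of $M$, the two local sheets of $g$ push forward a priori incompatible smooth structures from $V$ to the overlap in $W$; no single choice of smooth structure on $W_M$ makes $g$ a local diffeomorphism there until one has already moved $g$ by a regular homotopy to reconcile the sheets. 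Thus the smoothing of the target and the regular homotopy of the map must be interleaved along a handle induction (with $n\ge 5$ and $2(m-1)\le n$ controlling the relevant obstruction groups), rather than separated into a smoothing pass followed by a transversality pass. That interleaving, not just the quantitative bookkeeping you flag at the end, is the structural point your sketch misses.
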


\begin{proof}[Proof of Lemma~\ref{lemma:homotopy-to-generic-immersion}]
  Identifying collars of $\partial Y$ and $\partial P$ with $\partial Y\times I$ and $\partial P\times I$, we may assume that the given $f\colon Y\to P$ restricts to $f|_{\partial Y}\times\id$ on the collar.

  \begin{claim}
    \hskip-1pt There exist a smooth 3-dimensional vector bundle $\xi$ over $Y$ \kern-1.6pt and a bundle map $\phi\colon \tau_{E(\xi)} \to \tau_P$ such that the induced map $E(\xi)\to P$ extends $f\colon Y\to P$ and restricts to a topological normal bundle $E(\xi|_{\partial Y}) \hookrightarrow \partial P$ of $f(\partial Y)$ in~$\partial P$.
    (Here $Y\subset E(\xi)$ via the zero section.)
  \end{claim}

  In what follows, denote by $B\TOP_n$ and $BO_n$ the classifying spaces for topological $\R^n$-bundles and $n$-dimensional vector bundles, and denote by $\TOP_n/O_n$ the (homotopy) fiber of the natural map $BO_n \to B\TOP_n$.

  \begin{step-named}[Case~1]
    Suppose $Y=S^3$.
    We have $\pi_3(BO_6)=\pi_2(O_6)=\pi_2(O_3)=0$.
    Also, by~\cite[Essay V, p~246]{Kirby-Siebenmann:1977-1}, $\pi_2(\TOP_6/O_6) = 0$.
    By the long exact sequence for the fibration $\TOP_6/O_6 \hookrightarrow BO_6 \to B\TOP_6$, it follows that $\pi_3(B\TOP_6)=0$.
    Thus $f^*(\tau_P)=\varepsilon^6_Y$, a trivial bundle over~$Y$.
    Let $\xi=\varepsilon^3_Y$.
    Let $g=f\circ p\colon E(\xi)=Y\times \R^3 \to P$, where $p\colon Y\times \R^3\to Y$ is the projection.
    Then $g^*(\tau_P) =\epsilon^6_{Y\times\R^3} = \tau_{Y\times \R^3}$, so $g$ is covered by a bundle map $\tau_{Y\times \R^3} = g^*(\tau_P) \to \tau_P$ as claimed.
  \end{step-named}

  \begin{step-named}[Case~2]
    Suppose $Y=S^2\times I$.
    Let $\nu$ over $\partial Y$ be the pullback of the normal 3-dimensional vector bundle of $f(\partial Y)\subset \partial P$.
    (A $k$-submanifold in an $m$-manifold with $k\le 3$ and $m\ge 5$ always has a normal vector bundle, since it has a stable normal $\TOP$ bundle and $BO_3 \to BO \to B\TOP$ is 3-connected by~\cite[Essay V, p~246]{Kirby-Siebenmann:1977-1}.)

    It suffices to show that $\nu$ extends to a 3-dimensional vector bundle $\xi$ over~$Y$.
    In fact, if $\nu$ extends to~$\xi$, $E(\nu) \cup (Y\times I)$ is a strong deformation retract of~$E(\xi)$, so $j \cup f\colon E(\nu) \cup (Y\times I) \to P$ extends to a map $g\colon E(\xi) \to P$, where $j$ is the normal bundle inclusion $E(\nu) \hookrightarrow \partial P$.
    Then $g^*(\tau_P)|_{S^2\times 0} = (f|_{S^2\times 0})^*(\tau_{\partial P}\oplus \varepsilon^1) = \tau_{E(\nu)}|_{S^2\times 0} \oplus \varepsilon^1 = \tau_{E(\xi)}|_{S^2\times 0}$.
    It follows that $g^*(\tau_P) = \tau_{E(\xi)}$ since $S^2\times 0 \hookrightarrow Y=S^2\times I$ is a homotopy equivalence.

    For $i=0,1$, $f^*(\tau_P)|_{S^2\times i} = (f|_{S^2\times i})^* (\tau_{\partial P} \oplus \varepsilon^1) = \tau_{S^2\times i} \oplus \nu|_{S^2\times i} \oplus \varepsilon^1 = \nu|_{S^2\times i}\oplus \varepsilon^3$.
    Since $\pi_2(BO_6) = \pi_2(BO_3)$, it follows that $\nu|_{S^2\times 0} \cong \nu|_{S^2\times 1}$.
    Therefore, $\nu$ extends to $\xi:=\nu|_{S^2\times 0}\times I$ over $Y=S^2\times I$.
    This completes the proof of the claim.
  \end{step-named}

  Now, write $V=E(\xi)$.
  We may assume that $g\colon V\to P$ is an embedding on a collar $\partial V\times I$ of $\partial V\times 0 = \partial V = E(\xi|_{\partial Y})$ in~$V$.
  Let $V_0=V\setminus \partial V$, $Q_0=Q\setminus \partial Q$, $C=\partial V\times (0,\frac12] \subset V_0$, and apply Theorem~\ref{theorem:top-immersion} to $(V_0, P_0, C, \phi\colon \tau_{V_0} \to \tau_{P_0})$, to homotope $g$ rel $\partial V\times[0,\frac12]$ to an immersion $g\colon V\to P$.
  Apply Theorem~\ref{theorem:ks-smoothing}, to regularly homotope $g$ rel a neighborhood of $\partial V$ so that there exists a neighborhood $V_Y$ of $Y$ in $V$ and a smooth structure on $g(V_Y)$ for which $g|_{V_Y}\colon V_Y \to g(V_Y)$ is a smooth immersion.
  Finally, by a homotopy rel a neighborhood of $\partial V$, change the smooth map $g|_Y\colon Y\to g(V_Y)$ to a smooth generic immersion.
\end{proof}

\begin{proof}[Proof of Lemma~\ref{lemma:homotopy-to-generic-map}]
  Let $\nu$ over $\partial X$ be the pullback of the normal 3-dimensional vector bundle of $F(\partial X)$ in~$\partial Q$.
  As in Case~2 of the proof of Lemma~\ref{lemma:homotopy-to-generic-immersion}, it suffices to show that $\nu$ extends to a vector bundle $\xi$ over~$X$.
  The remaining part of the proof of Lemma~\ref{lemma:homotopy-to-generic-immersion} is carried out in the same way.

  Since all involved manifolds are oriented, the normal bundle $\nu$ is oriented, and thus $\nu$ is classified by a map $\partial X \to BSO_3$.
  The obstruction to extending $\partial X \to BSO_3$ to $X$ lies in $H^k(X,\partial X;\pi_{k-1}(BSO_3))$.
  We have $\pi_0(BSO_3)=0$ and $\pi_{k-1}(BSO_3)=\pi_{k-2}(SO_3)=0$ for $k=2, 4$.
  The relative complex $(X,\partial X)$ has one 1-cell and one 4-cell if $X=S^3\times I$, and one 2-cell and one 4-cell if $X=S^2\times I^2$.
  Thus all obstructions vanish and $\nu$ extends to a vector bundle $\xi$ over~$X$.
\end{proof}

\bibliographystyle{amsalpha}
\renewcommand{\MR}[1]{}

\hbadness 10000
\bibliography{research}

\end{document}